\documentclass[11pt,a4paper]{amsart}
\usepackage{upref}
\usepackage{enumitem,dsfont,amssymb,bm}
\usepackage{graphicx,color}
\usepackage{mathrsfs}
\usepackage[colorlinks=true, pdftitle={Fractional powers of monotone
  operators}, pdfauthor={Daniel Hauer, Yuahn He, Dehui Liu}]{hyperref}

\title[Fractional powers of monotone operators]{Fractional powers of monotone operators\\ in Hilbert spaces}

\author{Daniel Hauer}
\address[Daniel Hauer, Yuhan He, and Dehui Liu]{The University of
  Sydney, School of Mathematics and Statistics, NSW 2006, Australia}
\email{\href{mailto:daniel.hauer@sydney.edu.au}{\nolinkurl{daniel.hauer@sydney.edu.au}}}
\author{Yuhan He}
% \address[Yuhan He]{School of Mathematics and Statistics\\ The University of Sydney\\
%   NSW 2006\\ Australia}
\email{\href{mailto:yuhe0889@uni.sydney.edu.au}{\nolinkurl{yuhe0889@uni.sydney.edu.au}}}
\author{Dehui Liu}
% \address[Dehui Liu]{School of Mathematics and Statistics\\ The University of Sydney\\
%   NSW 2006\\ Australia}
\email{\href{mailto:dliu5892@uni.sydney.edu.au}{\nolinkurl{dliu5892@uni.sydney.edu.au}}}

\thanks{The results presented in this paper are the outgrows of a
  summer vacation research project at the University of Sydney under
  the supervision of Mr Hauer. Ms He and Mr Liu are undergraduate students at the University
of Sydney who participated at this research project and actively contributed.}

\subjclass[2010]{35R11,47H05,47H07,35B65.}

\keywords{Monotone operators, Hilbert space, evolution equations,
  fractional operators}

% --------------------------------------------------------------------
%
% Macros/environments
%
% --------------------------------------------------------------------

% number equations within sections
\numberwithin{equation}{section}

% set up theorem like environments
\theoremstyle{theorem}
\newtheorem{theorem}{Theorem}[section]
\newtheorem{proposition}[theorem]{Proposition}
\newtheorem{lemma}[theorem]{Lemma}
\newtheorem{corollary}[theorem]{Corollary}

\theoremstyle{definition}
\newtheorem{definition}[theorem]{Definition}

\theoremstyle{remark}
\newtheorem{remark}[theorem]{Remark}

% define various macros

\newcommand\R{{\mathbb{R}}}

\newcommand\C{{\mathbb{C}}}
\newcommand\E{\mathcal{E}}

\newcommand\A{\mathcal{A}}
\newcommand\WE{\mathcal{W}}

\newcommand\td{\mathrm{d}}
\newcommand\dx{\mathrm{d}x }

\newcommand\dr{\mathrm{d}r }
\newcommand\dmu{\mathrm{d}\mu }
\newcommand\dz{\mathrm{d}z }
\newcommand\ds{\mathrm{d}s }
\newcommand\dt{\mathrm{d}t }

\DeclareMathOperator*{\divergence}{div}

\def\1{\raisebox{2pt}{\rm{$\chi$}}}

\setlength{\textwidth}{13cm}

\newcommand\abs[1]{\lvert#1\rvert}
\newcommand\labs[1]{\left\lvert#1\right\rvert}
\newcommand\norm[1]{\lVert#1\rVert}
\newcommand\lnorm[1]{\left\lVert#1\right\rVert}

\definecolor{darkred}{rgb}{0.7,0.1,0.1}

% --------------------------------------------------------------------
%
%                   Main document
%
% --------------------------------------------------------------------

\begin{document}
\date{\today}
\maketitle

\tableofcontents
%
%\comment{\cdr{Dehui IS NOT  working.} \txtblue{Daniel IS NOT working.} \txtg{Yuhan IS NOT working.} }

\begin{abstract}
  In this article, we show that if $A$ is a maximal monotone
    operator on a Hilbert space $H$ with $0$ in the range $\textrm{Rg}(A)$ of
  $A$, then for every $0<s<1$, the Dirichlet problem associated
  with the Bessel-type equation
  \begin{displaymath}
    A_{1-2s}u:=
    -\frac{1-2s}{t}u_{t}-u_{tt}+Au\ni 0
  \end{displaymath}
  is well-posed for boundary values
  $\varphi\in \overline{D(A)}^{\mbox{}_{H}}$. This allows us to define
  the Dirichlet-to-Neumann (DtN) operator $\Lambda_{s}$ associated with $A_{1-2s}$ as
  \begin{displaymath}
    \varphi\mapsto \Lambda_{s}\varphi:=-\lim_{t\to 0+}t^{1-2s}u_{t}(t)\qquad\text{in H.}
  \end{displaymath} 
  The existence of the DtN operator $\Lambda_{s}$ associated with
  $A_{1-2s}$ is the first step to define fractional powers
  $A^{\alpha}$ of monotone (possibly, nonlinear and multivalued)
  operators $A$ on $H$. We prove that $\Lambda_{s}$ is monotone on $H$
  and if $\overline{\Lambda}_{s}$ is the closure of $\Lambda_{s}$ in
  $H\times H_{w}$ then we provide sufficient conditions implying that
  $-\overline{\Lambda}_{s}$ generates a strongly continuous semigroup
  on $\overline{D(A)}^{\mbox{}_{H}}$. In addition, we show that if $A$
  is completely accretive on $L^{2}(\Sigma,\mu)$ for a $\sigma$-finite
  measure space $(\Sigma,\mu)$, then $\Lambda_{s}$ inherits this
  property from $A$. 
  % Fractional powers $A^{\alpha}$ of linear
  % operators $A$ are directly related to interpolation theory and trace
  % mappings. We transfer these techniques to
  % nonlinear operators of monotone type.
  % The theory developed in this paper extends
  % partially the linear theory of fractional powers $A^{\alpha}$
  % developed in~[Arendt et al, Comm. PDE,2018] and the nonlinear theory
  % developed for $s=\frac{1}{2}$ by Barbu
  % [J. Fac. Sci. Univ. Tokyo Sect. IA Math. 1972] and Brezis [Israel
  % J. Math., 1972].
%
%to the complete range $0<s<1$.
% This paper extends some results of Viorel Barbu. It is concerned
% with bounded solutions of the problem: $u'' \in Au, u'(0) \in
% \partial j(u(0)-a)$ where $A$ is a maximal monotone operator in a Hilbert
% space $H$, $a \in D(A)$ and $j$ is a strictly convex lower semicontinuous function
% from $H$ to $[0, +\infty]$. An existence and uniqueness theorem for this
% problem is proved. Taking $j$ to be the indicator function of a point
% $u_0 \in D(A)$, one obtains a bounded solution $u(t)$ of the initial value
% problem: $u" \in Au, u(0) = u_0$. Denoting $u(t) = S_{\frac{1}{2}}(t)u_0$ one
% obtains a semi-group of contractions on $D(A)$. The generator of this
% semigroup is denoted by $A_{\frac{1}{2}}$. Further properties of
% $S_{\frac{1}{2}}(t)$ and $A_{\frac{1}{2}}$ are studied.
\end{abstract}

\section{Introduction and main results}

In the pioneering work~\cite{MR2354493}, Caffarelli and
Silvestre constructed three analytical proofs to show that the fractional Laplacian
$(-\Delta)^{s}$, ($0<s<1$), on $\R^{d}$, ($d\ge 1$),
coincides up to a multiple constant with the Dirichlet-to-Neumann (DtN) operator 
\begin{displaymath}
  %\label{eq:18}
\varphi\mapsto \Lambda_{s}\varphi:=\lim_{t\to
  0+}-t^{1-2s}u'(\cdot,t)\qquad \text{on $\R^{d}$}
\end{displaymath}
associated with the %(negative) 
Bessel type equation
\begin{equation}
  \label{eq:79}
  A_{1-2s}u:=
  -\frac{1-2s}{t}u_{t}-u_{tt}+Au\ni 0
\end{equation}
on the half-space $\R^{d+1}_{+}:=\R^{d}\times (0,+\infty)$ with
$A=-\Delta$. One crucial \emph{Ansatz} in~\cite{MR2354493} to obtain
this identification is to employ the change of variable
\begin{equation}
  \label{eq:15}
  z=\left(\frac{t}{2s}\right)^{2s},
\end{equation}
which transforms~\eqref{eq:79} into the equation 
\begin{equation}
  \label{eq:80}
  \tilde{A}_{1-2s}v:=-z^{-\frac{1-2s}{s}}v''+Av\ni 0.
\end{equation}

The identification of the fractional Laplacian $(-\Delta)^{s}$ with
the DtN operator $\Lambda_{s}$ has far reaching applications; for example,
in the study of nonlocal partial differential equations, it provides
new boundary regularity results including boundary Harnack
inequalities, a monotonicity formula, and several others (for
example, cf~\cite{MR3056307} or~\cite{MR3772192} and the references theirin). The
ideas in~\cite{MR2354493} were followed up quickly by many authors and
extended to several more abstract \emph{linear} settings. For
instance, if for $\alpha\ge 0$, $\mathcal{I}_{\alpha}$ denotes the
family of linear operators $A$ on a Banach space $X$ generating a
\emph{tempered $\alpha$-times integrated semigroup
  $\{T(t)\}_{t\ge 0}$ in $\mathcal{L}(X)$} (for details
cf~\cite{MR3056307}), then it was shown by Gal\'e, Miana and
Stinga~\cite{MR3056307} that for $A\in \mathcal{I}_{\alpha}$ and
$0<s<1$, the \emph{Dirichlet problem associated with the Bessel-type operator $A_{1-2s}$},
\begin{equation}
    \label{eq:1bis}
    \begin{cases}
      A_{1-2s}u(t) \ni 0& \text{for
        almost every }t>0,\\
     \mbox{}\hspace{0.95cm} u(0)=\varphi &
    \end{cases}
  \end{equation}
  admits a unique solution $u$ and
  the fractional power operator $A^{s}$ (in the Balakrishnan
  sense~\cite{MR0115096}) coincides with the DtN operator $\Lambda_{s}$
  associated with the Bessel-type operator $A_{1-2s}$ given
  by~\eqref{eq:79}. 
  % It is worth noting that for the class
  % $\mathcal{I}_{\alpha}$ of linear operators $A$, one can alternatively
  % define the fractional power $A^{s}$ of $A$ via the DtN operator $\Lambda_{s}$
  % associated with the Bessel-type operator $A_{1-2s}$ given
  % by~\eqref{eq:79}.

  An important subclass of $\mathcal{I}_{\alpha}$ is given by the
  family of \emph{$m$-sectorial operators} $A$ defined on a Hilbert
  space $H$ which are the restriction on $H$ of a continuous,
  coercive, sesquilinear form $\mathfrak{a} : V\times V\to \C$. Here,
  $V$ is another Hilbert space such that $V$ is embedded into $H$ by a
  linear bounded injection $j : V\hookrightarrow H$
  with a dense image $j(V)$ in $H$. For this subclass of operators $A$
  on $H$, the identification of the fractional power $A^{s}$ with the
  DtN operator $\Lambda_{s}$ was recently revisited by
  Arendt, Ter Elst and Warma~\cite{MR3772192}. By
  using techniques from linear interpolation theory, they could
  characterise the domain $D(\Lambda_{s})$ of $\Lambda_{s}$ and
  establish an imporant \emph{integration by parts} rule for solutions
  to the Dirichlet problem associated with $A_{1-2s}$. 

  Note, it is for the class of $m$-sectorial
  operators $A$ associated with a real symmetric form
  $\mathfrak{a} : V\times V\to \R$ that the \emph{linear} semigroup theory coincides with the
  \emph{nonlinear} semigroup theory % when the infinitessimal generator
  % $A$ either is associated with a real symmetric form
  % $\mathfrak{a} : V\times V\to \R$ or, equivalently, is associated with a
  % convex functional $\E =\tfrac{1}{2}\mathfrak{a} : V\to \R$
  % through the sub-differential $A=\partial_{H}\tilde{\E}$ of the extension
  % $\tilde{\E}$ of $\E$ on $H$
  (cf~\cite{MR0348562} or \cite{MR3465809}). 
 We briefly recall from~\cite{MR0348562} (see
  also~\cite{MR2582280}), a (possibly multivalued) operator
  $A : D(A)\to 2^{H}$ is called \emph{maximal monotone} in $H$, if $A$
  satisfies the \emph{monotonicity property}: 
 %\begin{displaymath}
  $(\hat{v}-v,\hat{u}-u)_{H}\ge 0$ for all
    $(u,v)$, $(\hat{u},\hat{v})\in A$,
and the so-called \emph{range condition}: $Rg(I+A)=H$. Here, we follow
the standard notation %(cf~\cite{MR0348562,MR2582280}) 
and usually identify an operator $A$ with its \emph{graph}
\begin{displaymath}
  A=\Big\{(u,v)\in H\times H\,\Big\vert\, v\in Au\Big\}\quad
  \text{in $H\times H$.}
\end{displaymath}
The set $D(A):=\{u\in H\,\vert\,Au\neq \emptyset\}$ is called the
\emph{domain} of $A$ and
$\textrm{Rg}(A):=\bigcup_{u\in D(A)}Au\;\subseteq H$ the \emph{range} of $A$.

In the framework of maximal monotone operators $A$ in
Hilbert spaces $H$, first results toward the fractional power
$A^{1/2}$ were obtained 
by Barbu \cite{MR0331133}. More precisely, if
$0\in \textrm{Rg}(A)$, then Barbu proved that the Dirichlet
problem~\eqref{eq:1bis} associated with the Bessel-type operator
$A_{0}$ is well-posedness, and then introduced the semigroup
$\{T_{1/2}(t)\}_{t\ge 0}$ as the contractive extension on the
closure $\overline{D(A)}^{\mbox{}_{H}}$ of
$T_{1/2}(t)\varphi:=u(t)$, ($t\ge 0$), for the unique solution
$u$ of the Dirichlet problem~\eqref{eq:1bis} with boundary value
$\varphi\in D(A)$. % The
% infinitesimal generator of
% $\{T_{\frac{1}{2}}(r)\}_{r\ge 0}$ was denoted by $A_{1/2}$, provided first result to Kato square root
% problem for maximal monotone operators $A$.
The paper~\cite{MR0331133} was quickly followed up by several authors.
For example, Brezis~\cite{MR0317123} provided a different proof of the
well-posedness of Dirichlet problem~\eqref{eq:1bis}, and
V\'eron~\cite{MR0493540} established well-posedness of the following
more general (but \emph{non-singular}) Dirichlet problem
\begin{displaymath}
  -p(t) u_{tt}-q(t) u_{t}+A u\ni 0,\qquad u(0)=\varphi 
\end{displaymath}
for $\varphi\in D(A)$ under the hypothesis that $p\in
W^{2,\infty}(0,+\infty)$, $q\in W^{1,\infty}(0,+\infty)$, and there is
an $\alpha>0$ satisfying $p(t)\ge \alpha>0$ for all $t\ge 0$.
We emphasises that neither Barbu nor Brezis identified the (negative)
infinitessimal generator of the semigroup
$\{T_{1/2}(t)\}_{t\ge 0}$ by the DtN operator $\Lambda_{1/2}$
associated with $A_{0}$, but in stead denoted $\Lambda_{1/2}$ by
$A_{1/2}$ in analogy to the fractional power $A^{1/2}$ of $A$ as it is
known from the linear theory (cf~\cite{MR0115096} or
\cite[Chapter~6.3]{MR1850825}). 

More than twenty years later, Alraabiou \& B\'enilan~\cite{MR1380576}
introduce the definition of the \emph{square power}
\begin{displaymath}
  A^2:=\liminf_{\lambda\to 0+}\frac{A-A_{\lambda}}{\lambda},
\end{displaymath}
via the Yosida approximation
$A_{\lambda}:=(I-(I+\lambda A)^{-1})/\lambda$ and showed that if
$A=\partial_{H}\E$ the subdifferential operator in $H$ of a convex,
proper, lower semicontinuous functional $\E : H\to [0,+\infty]$ with
$0\in \partial_{H} \E(0)$, then $A\subseteq (A_{1/2})^{2}$. This
justifies (at least in the case $s=1/2$) that the DtN operator
$\Lambda_{s}$ is a reasonable candidate for defining fractional powers
$A^{s}$ of maximal monotone operators $A$ on Hilbert spaces.\medskip

% In the linear theory, fractional powers
% $A^{s}$ of sectorial operators $A$ are given by the Balakrishnan
% integral (see~\cite{Mindes}). Since this definition cannot be carried
% over to the theory of nonlinear accretive operators, Alraabiou \&
% B\'enilan~\cite{} introduced the definition .  This question has been
% positively confirmed i

% Does the identification of fractional operators $A^{s}$ with the
% Dirichlet-to-Neumann operator $\Lambda_{s}$ holds for any (possibly multivalued)
% monotone operator $A$ on a Hilbert space $H$.
% (see Section~\ref{sec:pre} for the definition of \emph{monotone}
% and the framework in this paper).

The first aim of this paper is to extend the results obtained by
Barbu~\cite{MR0331133} and Brezis~\cite{MR0317123} (in the case
$s=1/2$) to the complete range $0<s<1$, the well-posedness of Dirchlet
problem~\eqref{eq:1bis} associated with Bessel-type operator
$A_{1-2s}$ for (possibly nonlinear) \emph{maximal monotone operators}
$A$ on a Hilbert space $H$. % This allows us to define for $0<s<1$, the
% fractional power $A^{s}$ for this class of operators $A$ on $H$. 
Before stating our first main result, let us introduce the following
notion of solutions of Dirchlet problem~\eqref{eq:1bis}.

% Further, we prove
% monotonicity properties of the DtN operator $\Lambda_{s}$ associated with
% $A_{1-2s}$. This allows us to study the semigroup

% In this paper, we extend the well-posedness
% results of Dirichlet problem~\eqref{eq:1bis} obtained
% in~\cite{MR0331133,MR0317123} for the case $s=1/2$ to the general case
% $0<s<1$. Before stating our first main result, we introduce the notion
% of solutions of Dirichlet problem~\eqref{eq:1bis} used throughout this paper.
% the well-posedness of the \emph{Dirichlet problem}~\eqref{eq:1bis}
% associated with the Bessel-type operator $A_{1-2s}$ given
% by~\eqref{eq:79} under the hypotheses that $A$ is a maximal monotone
% operator on a Hilbert space $H$ with $0$ in the range $\textrm{Rg}(A)$
% of $A$.

  \begin{definition}
    We call a function $u : [0,+\infty)\to H$ a \emph{strong solution}
    of Bessel-type equation~\eqref{eq:79} if
    $u\in W^{2,2}_{loc}((0,+\infty);H)$ and for almost every $t>0$,
    $u(t)\in D(A)$ and $\{t^{1-2s}u'(t)\}'\in
    t^{1-2s}A(u(t))$. Moreover, for given boundary value
    $\varphi\in H$, a function $u : [0,+\infty)\to H$ is called a
    \emph{solution of Dirichlet problem}~\eqref{eq:1bis} if
    $u\in C([0,+\infty);H)$, $u(0)=\varphi$, and $u$ is a strong
    solution of~\eqref{eq:79}.
  \end{definition}
  
  In the next theorem, we write $L^{\infty}(H)$ to denote
  $L^{\infty}(0,+\infty;H)$, $L^{2}_{\star}(H)$ for the $L^{2}$-space
  of all measurable functions $u : \R_{+}\to H$ where
  $\R_{+}:=(0,+\infty)$ is equipped with the Haar-measure
  $\tfrac{\dt}{t}$, and $W^{1,2}_{s}(H)$ to denote the set of all $u\in W^{1,1}_{loc}(H)$
  such that $t^{s}u$ and $t^{s}u'\in L^{2}_{\star}(H)$ (for more
  details, see~Section~\ref{sec:pre}).
%
% \comment{and the associated
% Dirichlet-to-Neumann operator $\Lambda_{s}$ is a maximal
% monotone operator on $H$.\txtr{Mention later!}} 
%  to this problem. We show
% for accretive operators $A$ on a Hilbert space $H$ having $0$ in the
% range $\textrm{Rg}(A)$ of $A$ that the
% Dirichlet-to-Neumann operator $\Lambda_{\alpha}$ associated with the
% Bessel-type operator
% $A_{1-2s}:=A+\frac{1-2s}{t}\tfrac{\td}{\dt}+\tfrac{\td^{2}}{\td
%   d^{2}}$ on the half-space $H^{+}:=H\times (0,+\infty)$ is an
% accretive operator on $H$. More precisely, our first main result reads as follows.
%
With these preliminaries, our first main result reads as follows.

\begin{theorem}
  \label{thm:1}
  Let $A$ be a maximal monotone operator on $H$
  with $0\in \textrm{Rg}(A)$. Then, for every $0<s<1$, 
  $\varphi\in \overline{D(A)}^{\mbox{}_{H}}$ and $y\in A^{-1}(\{0\})$, there is a unique solution
  $u\in L^{\infty}(H)$ of Dirichlet problem~\eqref{eq:1bis}
  satisfying \allowdisplaybreaks
  \begin{align}
    \label{eq:87A}
     \norm{u(t)-y}_{H} & \le \norm{u(\hat{t})-y}_{H}\qquad\text{for
       all $t\ge \hat{t}\ge 0$,}\\
    \label{eq:81}
    \norm{t u'}_{L^{2}_{\star}(H)} &\le
                                     \sqrt{s}\;\norm{\varphi-y}_{H}\\
    \norm{u'(t)}_{H} &\le
                       2 s\frac{\norm{\varphi-y}_{H}}{t}\quad
    \text{for every $t>0$,}\\[7pt]
    \label{eq:82}
    \lnorm{t^{1+2s}\{t^{1-2s}u'\}'}_{L^{2}_{\star}(H)} & \le
    \begin{cases}
          \sqrt{s}\;\norm{\varphi-y}_{H}
          & \text{if $s\ge  \frac{1}{2}$,}\\[7pt]
            \frac{\sqrt{s}\left(\frac{s}{1-2s}
              \frac{1}{2}+3\right)^{\frac{1}{2}}}{\sqrt{2}}\,\norm{\varphi-y}_{H}
         & \text{if $0<s<\frac{1}{2}$.}
    \end{cases}
  \end{align}
  % and the operator $z\mapsto \norm{v(z)}_{H}^{2}$ is convex, bounded and
  % decreasing on $[0,+\infty)$. 
  Moreover, if $\varphi\in D(A)$, then %$u\in W^{1,2}_{s}(H)$ with
  $t^{1-2s}u'\in W^{1,2}_{s}(H)$ and 
  \begin{align}
      \label{eq:30bis2A}
        \lnorm{\displaystyle\lim_{t\to 0+}t^{1-2s}u'(t)}_{H} &\le (2s)^{1-2s}
        \left(\norm{A^{0}\varphi}_{H}^{\frac{1}{2}}+\norm{\varphi-y}_H^{\frac{1}{2}}\right)^{2},\\ 
     \label{eq:14bis2A}
      \norm{t^{s}\{t^{1-2s}u'\}}_{L^{2}_{\star}(H)} &\le
                                                      (2s)^{1-s}\left(\norm{A^{0}\varphi}_{H}^{\frac{1}{2}} 
                                                      +\norm{\varphi-y}_{H}\right),\\
     \label{eq:31bis2A}
     \norm{t^{s}\{t^{1-2s}u'\}'}_{L^{2}_{\star}(H)}
   &\le (2s)^{1-s}\left(\norm{A^{0}\varphi}_{H}
     +\norm{\varphi-y}_H^{\frac{1}{2}}\,
     \norm{A^{0}\varphi}_{H}^{\frac{1}{2}}\right).
    \end{align}  
  In particular, for every two solutions
  $u$ and $\hat{u}$ of~\eqref{eq:1bis} with boundary value $\varphi$
  and $\hat{\varphi}\in \overline{D(A)}$, one has that
  \begin{equation}
    \label{eq:83}
    \norm{u(t)-\hat{u}(t)}_{H}\le \norm{u(\hat{t})-\hat{u}(\hat{t})}_{H}
    \qquad\text{for all $t\ge \hat{t}\ge 0$.}
  \end{equation}
\end{theorem}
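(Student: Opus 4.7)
The plan is to proceed by Yosida regularization, solve a truncated Dirichlet problem, derive uniform a priori estimates, and pass to the limit. After translating (if necessary) so that $y=0\in A(0)$, I would regularize $A$ by its Yosida approximation $A_\lambda:=(I-J_\lambda)/\lambda$, where $J_\lambda:=(I+\lambda A)^{-1}$ is the resolvent of $A$; then $A_\lambda$ is everywhere defined on $H$, monotone, Lipschitz with constant $1/\lambda$, and satisfies $0\in A_\lambda(0)$. For each $\lambda,R>0$, I solve the two-point boundary value problem
\begin{equation*}
  \{t^{1-2s}u_{\lambda,R}'\}' = t^{1-2s}A_\lambda(u_{\lambda,R}),\qquad u_{\lambda,R}(0)=\varphi,\quad u_{\lambda,R}(R)=0.
\end{equation*}
The change of variable $z=(t/(2s))^{2s}$ from \eqref{eq:15} reduces this to $v_{zz}=z^{(1-2s)/s}A_\lambda v$ on an interval $(0,Z)$ (cf.~\eqref{eq:80}), which is a monotone, coercive equation to which, after lifting the boundary data to $\varphi(1-z/Z)$, the Browder--Minty theorem applies; monotonicity of $A_\lambda$ also yields uniqueness at this approximate level.

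The heart of the proof is the derivation of the quantitative bounds \eqref{eq:87A}--\eqref{eq:31bis2A} at the approximate level, uniformly in $\lambda$ and $R$. For \eqref{eq:87A}, setting $\phi(t):=\|u_{\lambda,R}(t)\|_H^2$ and using the equation together with monotonicity of $A_\lambda$ and $0\in A_\lambda(0)$, one computes
\begin{equation*}
  (t^{1-2s}\phi'(t))' = 2\,t^{1-2s}\|u_{\lambda,R}'(t)\|_H^2 + 2\,t^{1-2s}(A_\lambda u_{\lambda,R}(t), u_{\lambda,R}(t))_H \ge 0,
\end{equation*}
so $t\mapsto t^{1-2s}\phi'(t)$ is nondecreasing; combined with $u_{\lambda,R}(R)=0$ this forces $\phi'\le 0$ throughout $(0,R)$. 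The $L^2_\star$ energy estimate \eqref{eq:81} is cleanest in $z$-coordinates: testing $v_{zz}=z^{(1-2s)/s}A_\lambda v$ against $z v$ and integrating by parts gives
\begin{equation*}
  \int_0^Z z\,\|v_z\|_H^2\,dz + \int_0^Z z^{(1-s)/s}(A_\lambda v, v)_H\,dz = \tfrac{1}{2}\|\varphi\|_H^2,
\end{equation*}
whence $\int_0^Z z\|v_z\|_H^2\,dz \le \tfrac{1}{2}\|\varphi\|_H^2$; transforming back via $tu'=2sz v_z$ and $dt/t=dz/(2sz)$ yields $\|tu'\|_{L^2_\star(H)}^2 = 2s\int z\|v_z\|_H^2\,dz \le s\|\varphi\|_H^2$, i.e.,~\eqref{eq:81}. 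The pointwise bound on $\|u'(t)\|_H$ follows from a Bernstein-type differential inequality for $t^2\|u'(t)\|_H^2$, and \eqref{eq:82} is obtained by substituting the previous bounds back into the equation itself. Under the stronger assumption $\varphi\in D(A)$, estimates \eqref{eq:30bis2A}--\eqref{eq:31bis2A} come from repeating the energy computation with $u'$ in place of $u$, exploiting $\|A_\lambda\varphi\|_H\le \|A^0\varphi\|_H$ uniformly in $\lambda$.

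Finally, I pass to the limit $R\to\infty$ and $\lambda\to 0^+$. The uniform estimates yield weak compactness in the weighted Sobolev space $W^{1,2}_s(H)$, and \eqref{eq:87A} provides equicontinuity on compact subintervals of $(0,\infty)$, so a diagonal extraction produces a limit $u\in L^\infty(H)\cap C([0,\infty);H)$. The nonlinear identification $A_\lambda u_\lambda \to w\in Au$ a.e.\ is then performed by Minty's trick, using strong convergence of $J_\lambda u_\lambda \to u$ on compact subintervals. The contraction \eqref{eq:83} (and thus uniqueness) then follows by applying the same monotonicity argument as for \eqref{eq:87A} to $\|u(t)-\hat u(t)\|_H^2$ for two solutions. \textbf{The main obstacle} will be the analysis at $t\to 0^+$: showing strong continuity of the limit $u$ up to $t=0$ with $u(0)=\varphi$ when $\varphi$ lies only in $\overline{D(A)}^{\mbox{}_H}$ (not in $D(A)$) requires an approximation of $\varphi$ by elements of $D(A)$ combined with the stability encapsulated by \eqref{eq:83}; moreover, the sharp constants in \eqref{eq:82} must be derived through a case-split between $s\ge 1/2$ and $s<1/2$, because the sign of the exponent $1-2s$ controls the boundary behavior of the weighted quantities and forces different integration-by-parts analyses.
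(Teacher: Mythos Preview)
Your overall plan is correct and closely parallel to the paper's, with the same essential ingredients: the change of variable~\eqref{eq:15}, Yosida regularization, a priori energy estimates, a compactness/Minty limit passage, and finally the approximation of $\varphi\in\overline{D(A)}^{\mbox{}_{H}}$ by elements of $D(A)$ using the contraction~\eqref{eq:83}. Your identification of the obstacles (continuity at $t=0^+$ for general $\varphi$, and the sign of $1-2s$ forcing a case split in~\eqref{eq:82}) matches exactly what the paper has to deal with.

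The genuine difference is the regularization strategy. You truncate to a finite interval $(0,R)$ with an artificial Dirichlet condition at $R$ and apply Browder--Minty there; the paper instead works on the full half-line from the outset, adding a damping term $\delta v$ so that the problem becomes coercive in the weighted space $L^{2}_{\underline{s}}(H)$. Concretely, the paper recasts the equation plus initial condition as a subdifferential inclusion $\partial_{L^2_{\underline{s}}}\E(v)\ni -z^{-(1-2s)/s}v''$ (Proposition~\ref{prop:1}) and then solves $\A_\lambda v+\delta v+\partial\E(v)=0$ in one shot via Brezis's perturbation lemma, passing first $\lambda\to 0$ and then $\delta\to 0$. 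Your truncation is more elementary and avoids the weighted-space machinery; the paper's route, on the other hand, packages the boundary trace neatly into $\partial\E$ and makes the two limit passages more symmetric.

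One point where your outline is too optimistic: you cannot get~\eqref{eq:82} merely ``by substituting the previous bounds back into the equation itself'', because the equation only says $\{t^{1-2s}u'\}'\in t^{1-2s}Au$ and you have no a priori pointwise control on $Au$. The paper obtains the corresponding bound~\eqref{eq:77} on $z^{2}v''$ by a difference-quotient argument in the $z$-variable (end of step~6 in the proof of Theorem~\ref{thm:2}): one tests the monotonicity inequality for $A$ at $z$ and $z+h$ against $z^\beta$, integrates by parts, sends $h\to 0$, and chooses $\beta=3$; the case split $s\gtrless 1/2$ enters because the sign of the cross term $\tfrac{1-2s}{s}\int z^{2}(v'',v')$ changes. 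At the $A_\lambda$ level you could alternatively differentiate the equation directly (since $A_\lambda$ is Lipschitz) to show $\|v'_\lambda(z)\|_H$ is nonincreasing and then run the same energy computation, but either way a genuine second-order estimate is needed, not a substitution.
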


% We note that Theorem~\ref{thm:1} % is consistent with~\cite[Th\'eor\`eme~1]{MR0317123}
% % for the case $s=1/2$, and
% also extendts partially the existence
% theorem~\cite[Theorem~4.4]{MR3772192} for sectorial operators $A$ on
% $H$ associated with a symmetric bilinear form
% $\mathfrak{a} : V\times V\to \R$ to the nonlinear framework since the
% operator $A$ coincides with the
% subdifferential $\partial_{H} \tfrac{1}{2}\mathfrak{a}(u,u)$ (see
% Remark~\ref{rem:1} below).\medskip

%By~\cite[Proposition~1.2.10]{MR1329547}, for $t^{1-2s}u'\in
%W^{1,2}_{s}(H)$
According to Theorem~\ref{thm:1}, for every $\varphi\in D(A)$, the
\emph{outer unit normal derivative}
\begin{equation}
  \label{eq:94}
 \Lambda_{s}\varphi:= -\lim_{t\to 0+}t^{1-2s}u'(t)\qquad\text{exists in $H$.}
\end{equation}
Thus, thanks to this well-posedness result, the DtN operator
$\Lambda_{s}$ assigning each Dirichlet boundary condition
$\varphi\in D(A)$ to the Neumann derivative~\eqref{eq:94} of the
unique solution $u$ of~\eqref{eq:79} is a well-defined mapping
$\Lambda_{s} : D(A)\to H$. More precisely, we can state the following
theorem. Here, $H_{w}$ denotes the space $H$ equipped with the weak
toplogy $\sigma(H,H')$.

% associated with the
% Bessel-type operator $A_{1-2s}$ defined in~\eqref{eq:79}. 

\begin{theorem}
  \label{thm:DtN}
  Let $A$ be a maximal monotone operator on $H$ with 
  $0\in \textrm{Rg}(A)$. Then, for every $0<s<1$, the
  Dirichlet-to-Neumann operator
  \begin{displaymath}
    \Lambda_{s}:=\Bigg\{(\varphi,w)\in H\times H\,\Bigg\vert
    \begin{array}[c]{l}
      \exists \text{ a strong solution $u$ of~\eqref{eq:79} satisfying}~\eqref{eq:87A},\\
      u(0)=\varphi\text{ in $H$, \& }w=-\displaystyle\lim_{t \to 0+} t^{1-2s}u'(t)\text{ in $H$}
    \end{array}
    \Bigg\}
  \end{displaymath}
  is a monotone, well-defined mapping $\Lambda_{s} : D(\Lambda_{s})
  \to H$ satisfying
  \begin{displaymath}
    D(A)\subseteq D(\Lambda_{s}) \subseteq\overline{D(A)}^{\mbox{}_{H}}\quad\text{ and }\quad
    D(A)\subseteq \textrm{Rg}(I_{H}+\lambda\, \Lambda_{s})\qquad\text{for all $\lambda>0$.}
  \end{displaymath}
  % The closure $\overline{\Lambda}_{s}$ of $\Lambda_s$ in $H\times H_{w}$
  % is a well-defined mapping and characterised by
  % \begin{displaymath}
  %   \overline{\Lambda}_{s}:=\Bigg\{(\varphi,w)\in H\times H\,\Bigg\vert
  %   \begin{array}[c]{l}
  %     \exists \text{ $(\varphi_{n},w_{n})\in \Lambda_{s}$ s.t. }
  %     \displaystyle\lim_{n\to +\infty}(\varphi_{n},w_{n})=(\varphi,w)\\
  %     \text{ in $H\times H$ \&} \text{ a strong solution $u$ of~\eqref{eq:79}}\\
  %     \text{satisfying}~\eqref{eq:87A}\; \&\; u(0)=\varphi\text{ in $H$.}
  %   \end{array}
  %   \Bigg\}
  % \end{displaymath}
  % with domain
  % $D(\overline{\Lambda}_{s})=\overline{D(A)}^{\mbox{}_{H}}$ and
  % \begin{displaymath}
  %   \overline{D(A)}^{\mbox{}_{H}}\subseteq \textrm{Rg}(I_{H}+\lambda\,
  %   \overline{\Lambda_{s}})
  %   \qquad\text{for all $\lambda>0$.}
  % \end{displaymath} 
  If $D(A)$ is dense in $H$, then closure $\overline{\Lambda}_{s}$ of
  $\Lambda_s$ in $H\times H_{w}$ is maximal monotone and
  $-\overline{\Lambda}_{s}$ generates a strongly continuous semigroup
  $\{T_{s}(t)\}_{t\ge 0}$ of contractions $T_{s}(t) : H\to
  H$. Moreover, for every $\varphi\in H$, there is a function $U(r,t)$
  satisfying
  \begin{displaymath}
      \begin{cases}
      -\frac{1-2s}{r}U_{r}(r,t)-U_{rr}(r,t)+AU(r,t) \ni 0& \text{for
        a.e. $r>0$, all $t>0$,}\\
     \mbox{}\hspace{4.76cm} U(0,t)=T_{s}(t)\varphi & \text{for all
     }t\ge 0,\\
     \mbox{}\hspace{2.86cm} \displaystyle\lim_{r \to 0+}
     r^{1-2s}U_{r}(r,t)\in \frac{\td}{\dt}T_{s}(t)\varphi &\text{for all
     }t>0.
    \end{cases}
  \end{displaymath}
  % and the integration by parts rule holds
  % \begin{align*}
  %   &-\left(\displaystyle\lim_{r \to 0 +} r^{1-2s}U_{r}(r,t),\xi(0)\right)_{H}=\int_{0}^{\infty}
  %     \left(\{r^{1-2s}U_{r}(r,t)\}_{r},\xi(r)\right)_{H}\,\dr\\
  %   &\hspace{7cm} +\int_{0}^{\infty} \left(r^{1-2s}U_{r}(r,t),\xi'(r)\right)_{H}\,\dr
  % \end{align*}
  % for every $\xi\in W^{1,2}_{1-s}(H)$ and $t>0$ (and for all
  % $t\ge 0$ if $\varphi\in D(A)$). 
\end{theorem}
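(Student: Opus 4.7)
The plan is to assemble Theorem~\ref{thm:DtN} in four tranches---well-definedness, monotonicity, range condition, and semigroup---with Theorem~\ref{thm:1} as the principal input. First I would read off that $\Lambda_s$ is a single-valued map sandwiched between $D(A)$ and $\overline{D(A)}^{\mbox{}_{H}}$. Estimate~\eqref{eq:30bis2A} guarantees that the Neumann limit $-\lim_{t\to 0+}t^{1-2s}u'(t)$ exists whenever $\varphi\in D(A)$, so $D(A)\subseteq D(\Lambda_s)$; conversely, if $(\varphi,w)\in\Lambda_s$ then $\varphi=u(0)=\lim_{t\to 0+}u(t)$ with $u(t)\in D(A)$ for a.e.\ $t>0$, forcing $\varphi\in\overline{D(A)}^{\mbox{}_{H}}$. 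Single-valuedness follows from the uniqueness part of Theorem~\ref{thm:1}: two elements $(\varphi,w),(\varphi,\hat w)\in\Lambda_s$ would come from strong solutions with identical boundary data both satisfying~\eqref{eq:87A}, and these solutions, hence $w$ and $\hat w$, must coincide.

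Monotonicity of $\Lambda_s$ then rests on a weighted integration by parts. For $(\varphi,w),(\hat\varphi,\hat w)\in\Lambda_s$ with associated solutions $u,\hat u$, I would pick measurable selections realising $\{t^{1-2s}u'\}'\in t^{1-2s}Au(t)$ and $\{t^{1-2s}\hat u'\}'\in t^{1-2s}A\hat u(t)$; the monotonicity of $A$ yields $\langle\{t^{1-2s}(u-\hat u)'\}',u-\hat u\rangle_{H}\ge 0$ pointwise a.e., and integrating against $\dt$ from $\epsilon$ to $T$ after a product-rule identity produces
\begin{equation*}
\bigl\langle T^{1-2s}(u-\hat u)'(T),u(T)-\hat u(T)\bigr\rangle_{H}\ge\bigl\langle\epsilon^{1-2s}(u-\hat u)'(\epsilon),u(\epsilon)-\hat u(\epsilon)\bigr\rangle_{H}.
\end{equation*}
Sending $T\to\infty$ annihilates the left-hand side via the pointwise bound $\|u'(t)\|_{H}\le 2s\|\varphi-y\|_{H}/t$ together with~\eqref{eq:87A}, while $\epsilon\to 0+$ reduces the right-hand side to $-\langle w-\hat w,\varphi-\hat\varphi\rangle_{H}$, giving the desired inequality.

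The range condition $D(A)\subseteq\textrm{Rg}(I_H+\lambda\Lambda_s)$ is the main obstacle, as it reformulates as the singular nonlinear Robin problem of locating a strong solution $u$ of~\eqref{eq:79} with $u(0)-\lambda\lim_{t\to 0+}t^{1-2s}u'(t)=\varphi$. My plan is a Yosida attack: replace $A$ by the Lipschitz, everywhere-defined $A_\mu=(I-(I+\mu A)^{-1})/\mu$ (which still satisfies $0\in\textrm{Rg}(A_\mu)$) and apply Theorem~\ref{thm:1} to produce the DtN operator $\Lambda_s^\mu$. Because $A_\mu$ is Lipschitz, $\Lambda_s^\mu$ turns out to be an everywhere-defined, demicontinuous, monotone operator on $H$, hence maximal monotone; the resolvent equation $\psi_\mu+\lambda\Lambda_s^\mu\psi_\mu=\varphi$ therefore admits a unique solution $\psi_\mu\in H$. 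The uniform-in-$\mu$ estimates of Theorem~\ref{thm:1} enable passage to weak--strong limits in $H_w$ of the corresponding Dirichlet extensions $u_\mu$, and the demi-closedness of the maximal monotone $A$ identifies the limit $u$ as a strong solution of the original Bessel equation with $(u(0),(\varphi-u(0))/\lambda)\in\Lambda_s$.

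With the range condition in hand, the closure $\overline{\Lambda}_s$ in $H\times H_w$ is still monotone since the monotonicity inequality passes to strong--weak limits. Assuming $\overline{D(A)}^{\mbox{}_{H}}=H$, any $f\in H$ can be approximated by $f_n\in D(A)\subseteq\textrm{Rg}(I_H+\Lambda_s)$, and the $1$-Lipschitz property of the resolvent of a monotone operator makes the corresponding preimages Cauchy in $H$, yielding $\textrm{Rg}(I_H+\overline{\Lambda}_s)=H$; Minty's theorem then upgrades $\overline{\Lambda}_s$ to maximal monotone, and the Komura--Brezis generation theorem produces the strongly continuous contraction semigroup $\{T_s(t)\}_{t\ge 0}$ with generator $-\overline{\Lambda}_s$ on $\overline{D(\overline{\Lambda}_s)}^{\mbox{}_{H}}=H$. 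Finally, for $\varphi\in H$ I would take $U(r,t)$ to be the Dirichlet extension supplied by Theorem~\ref{thm:1} with boundary value $T_s(t)\varphi\in H$; this automatically satisfies the Bessel equation in $r$, and the right-differentiability of nonlinear semigroups generated by maximal monotone operators in Hilbert space places $\tfrac{\td}{\dt}T_s(t)\varphi$ inside $-\overline{\Lambda}_s T_s(t)\varphi$, identifying $-\lim_{r\to 0+}r^{1-2s}U_r(r,t)$ as an element of $\overline{\Lambda}_s T_s(t)\varphi$ and closing the argument.
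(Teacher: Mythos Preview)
Your treatment of well-definedness, the domain inclusions, and monotonicity is essentially correct and parallels the paper's argument (the paper works in the $z$-variable after the change of variable~\eqref{eq:15} and uses that $z\mapsto\|v_1(z)-v_2(z)\|_H^2$ is convex and bounded, hence decreasing, rather than explicit limits $T\to\infty$, but the content is the same). The semigroup paragraph is also standard and matches the paper.

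The real divergence---and the real problem---is in the range condition $D(A)\subseteq\textrm{Rg}(I_H+\lambda\Lambda_s)$. The paper does \emph{not} argue by Yosida approximation of $A$; it simply invokes Theorem~\ref{thm:2Robin} (equivalently Theorem~\ref{thm:1bis} with $j(v)=\tfrac{1}{2\lambda}\|v\|_H^2$): the well-posedness of the Robin problem~\eqref{eq:1Robin} says precisely that for $\varphi\in D(A)$ there is a strong solution $u$ with $-\lim_{t\to 0+}t^{1-2s}u'(t)+\lambda u(0)=\varphi$, i.e.\ $(I_H+\lambda\Lambda_s)u(0)=\varphi$.

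Your Yosida scheme has a gap at the limit passage. You need the Neumann trace $\lim_{t\to 0+}t^{1-2s}u'(t)$ to \emph{exist} for the limit solution $u$ with boundary value $\psi=\lim_\mu\psi_\mu$. The only $\mu$-uniform estimates you can read off from Theorem~\ref{thm:1} are~\eqref{eq:81}--\eqref{eq:82}, which control $u_\mu$ away from $t=0$; the stronger estimates~\eqref{eq:30bis2A}--\eqref{eq:31bis2A} that place $t^{1-2s}u_\mu'$ in $W^{1,2}_s(H)$ (and hence give a trace at $t=0$ via Proposition~\ref{propo:1}) depend on $\|A_\mu^{0}\psi_\mu\|_H$. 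Nothing in your setup bounds this uniformly in $\mu$: the crude Lipschitz bound gives only $\|A_\mu\psi_\mu\|_H\le\mu^{-1}\|\psi_\mu\|_H$. Without a uniform $W^{1,2}_s(H)$ bound on $t^{1-2s}u_\mu'$ you cannot conclude that the limit $u$ carries a Neumann trace at all, so you cannot place $(u(0),(\varphi-u(0))/\lambda)$ in $\Lambda_s$.

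The gap can be closed, but only by re-deriving the a~priori estimates using the Robin identity $\psi_\mu+\lambda\Lambda_s^\mu\psi_\mu=\varphi$ with $\varphi\in D(A)$ as the reference point (so that $\|A_\mu\varphi\|_H\le\|A^{0}\varphi\|_H$ uniformly). That computation is exactly what the paper carries out in the proof of Theorem~\ref{thm:1bis}/Theorem~\ref{thm:2}. In other words, your route is not a shortcut: it either leaves the gap above or reproduces the Robin-problem argument that the paper has already packaged as Theorem~\ref{thm:2Robin}. The cleanest fix is to replace your third paragraph by a one-line citation of Theorem~\ref{thm:2Robin}.
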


Due to Theorem~\ref{thm:DtN} and the
results~\cite{MR0331133,MR0317123,MR1380576} (for $s=1/2$), it makes
sense to define the fractional power $A^s$ of $A$ for $0<s<1$ via the DtN operator
$\Lambda_{s}$.

\begin{definition}
  For a maximal monotone operator $A$ on $H$ with
  $0\in \textrm{Rg}(A)$, the \emph{fractional
    power $A^{s}$ of $A$} for $0<s<1$, is defined by
  \begin{displaymath}
    A^{s}=\overline{\Lambda}_{s},
  \end{displaymath}
  where $\overline{\Lambda}_{s}$ denotes the closure in $H\times H_{w}$ of
  the DtN operator $\Lambda_{s}$ associated with the Bessel-type
  operator $A_{1-2s}$ defined in Theorem~\ref{thm:DtN}.
\end{definition}

%\begin{remark}
  It is the task of our forthcoming work in this direction
  to extend the definition of $A^2$ provided by Alraabiou \&
  B\'enilan~\cite{MR1380576} to the case $A^{k}$ for every integer
  $k\ge 2$ and to show that $A\subseteq (A^{\frac{1}{k}})^{k}$.\medskip
%\end{remark}

In the case the Hilbert space $H=L^{2}(\Sigma,\mu)$ of a
$\sigma$-finite measure space $(\Sigma,\mu)$, and the operator $A$ is
\emph{completely accretive} on $L^{2}(\Sigma,\mu)$ then this property
also holds for the DtN operator $\Lambda_{s}$ for $0<s<1$.  For the
notion of \emph{completely accretive} operators, \emph{order
  preservability}, and \emph{Orlicz spaces} $L^{\psi}(\Sigma,\mu)$, we
refer to Definitions~\ref{def:completely-accretive}
and~\ref{def:orlicz} in the subsequent section.

\begin{theorem}
\label{thm:DtNinterpol}
Let $A$ be an $m$-completely accretive operator on the Hilbert space
$H=L^{2}(\Sigma,\mu)$ of a $\sigma$-finite measure space
$(\Sigma,\mu)$, and $0\in \textrm{Rg}(A)$. Then for every $0<s<1$, the
DtN operator $\Lambda_{s}$ is also completely accretive on
$L^{2}(\Sigma,\mu)$. In particular, if $\{T_{s}(t)\}_{t\ge 0}$ is the
semigroup generated by $-\overline{\Lambda}_{s}$ on $L^{2}(\Sigma,\mu)$, then
$\{T_{s}(t)\}_{t\ge 0}$ is order-preserving and every map $T_{s}(t)$
is $L^{\Psi}$-contractive on $L^{2}(\Sigma,\mu)$ for every
right-continuous $N$-function $\psi$.
\end{theorem}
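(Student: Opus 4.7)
The plan is to establish complete accretivity of $\Lambda_{s}$ directly on $D(A)$ by testing the Bessel-type extension equation against the standard family $J_{0}$ of nondecreasing Lipschitz functions $j:\R\to\R$ with $j(0)=0$, and then to extract the semigroup conclusions from the general theory of $m$-completely accretive operators together with the Crandall--Liggett exponential formula. Recall the B\'enilan--Crandall characterisation: an operator $B$ on $L^{2}(\Sigma,\mu)$ is completely accretive if and only if $\int_{\Sigma} j(u-\hat u)(v-\hat v)\,d\mu\ge 0$ for every $(u,v),(\hat u,\hat v)\in B$ and every $j\in J_{0}$.

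Fix $\varphi,\hat\varphi\in D(A)$ and let $u,\hat u\in L^{\infty}(H)$ be the corresponding solutions of~\eqref{eq:1bis} provided by Theorem~\ref{thm:1}, together with measurable selections $v(t)\in Au(t)$ and $\hat v(t)\in A\hat u(t)$ with $(t^{1-2s}u'(t))'=t^{1-2s}v(t)$ and its counterpart for $\hat u$ for a.e.\ $t>0$. Writing $w=u-\hat u$, I multiply the subtracted equation by $j(w(t))$, integrate over $\Sigma\times(\varepsilon,T)$, and integrate by parts in $t$ to obtain
\begin{equation*}
  \Bigl[t^{1-2s}\bigl(w'(t),j(w(t))\bigr)_{L^{2}}\Bigr]_{\varepsilon}^{T}
  -\int_{\varepsilon}^{T}\!\!\int_{\Sigma} t^{1-2s}\,j'(w)\,|w'|^{2}\,d\mu\,dt
  =\int_{\varepsilon}^{T}\!\!\int_{\Sigma} t^{1-2s}(v-\hat v)\,j(w)\,d\mu\,dt.
\end{equation*}
The interior term on the left is nonnegative since $j'\ge 0$, and the right-hand side is nonnegative because $A$ is itself completely accretive, giving $\int_{\Sigma}(v(t)-\hat v(t))\,j(u(t)-\hat u(t))\,d\mu\ge 0$ for a.e.\ $t>0$.

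It then remains to pass to the limits $T\to\infty$ and $\varepsilon\to 0+$ in the boundary bracket. At infinity, the pointwise bound $\|u'(t)\|_{H}\le 2s\|\varphi-y\|_{H}/t$ from Theorem~\ref{thm:1} together with the Lipschitz bound $\|j(w(t))\|_{H}\le \mathrm{Lip}(j)\cdot 2\|\varphi-y\|_{H}$ gives $t^{1-2s}\bigl|\bigl(w'(t),j(w(t))\bigr)_{L^{2}}\bigr|=O(t^{-2s})\to 0$. At $\varepsilon\to 0+$, the continuity $w(\varepsilon)\to\varphi-\hat\varphi$ in $H$ and Lipschitzness of $j$ imply $j(w(\varepsilon))\to j(\varphi-\hat\varphi)$ strongly in $H$, while~\eqref{eq:30bis2A} provides the strong convergence $\varepsilon^{1-2s}w'(\varepsilon)\to -(\Lambda_{s}\varphi-\Lambda_{s}\hat\varphi)$ in $H$; hence the left boundary contribution converges to $-\int_{\Sigma}(\Lambda_{s}\varphi-\Lambda_{s}\hat\varphi)\,j(\varphi-\hat\varphi)\,d\mu$. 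Collecting signs yields $\int_{\Sigma} j(\varphi-\hat\varphi)(\Lambda_{s}\varphi-\Lambda_{s}\hat\varphi)\,d\mu\ge 0$ for every $j\in J_{0}$, which is exactly complete accretivity of $\Lambda_{s}$ on $D(A)$.

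Since complete accretivity is preserved under closures in $H\times H_{w}$---if $u_{n}\to u$ in $H$ and $v_{n}\rightharpoonup v$ in $H$, then $j(u_{n}-\hat u)\to j(u-\hat u)$ strongly in $H$ and the inner-product pairing passes to the limit---$\overline{\Lambda}_{s}$ inherits this property. The semigroup conclusions now follow from standard nonlinear semigroup machinery: $L^{\Psi}$-contractivity and order-preservation of the resolvents $(I+\lambda\overline{\Lambda}_{s})^{-1}$ are equivalent to complete accretivity, and these properties transfer through the Crandall--Liggett exponential formula to the generated semigroup $\{T_{s}(t)\}_{t\ge 0}$. The main technical hurdle is the passage $\varepsilon\to 0+$ in the boundary bracket: without the strong $L^{2}$-convergence of $\varepsilon^{1-2s}w'(\varepsilon)$ supplied by~\eqref{eq:30bis2A} one could only extract a weak subsequential limit, which would not suffice to identify the limiting pairing against $j(w(\varepsilon))$; a subsidiary technical point is the measurable choice of the selections $v,\hat v$ making the double integral well-defined, which is controlled by the bounds~\eqref{eq:82}, \eqref{eq:14bis2A} and~\eqref{eq:31bis2A} of Theorem~\ref{thm:1}.
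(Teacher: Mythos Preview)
Your argument is correct and reaches the same conclusion, but the paper organises the proof differently. Rather than testing directly against $j\in J_{0}$, the paper first proves a more general statement (Theorem~\ref{thm:4}): if $A$ is $\partial_{H}\phi$-monotone for any proper convex lower semicontinuous $\phi$, then so is the DtN operator. The proof works in the transformed $z$-variable, replaces your test function $j(w)$ by the Fr\'echet derivative $\phi'_{\mu}(w)$ of the Moreau--Yosida regularisation, and invokes the integration-by-parts rule of Lemma~\ref{lem:ibp} on $[0,\infty)$, which disposes of both endpoints at once via the trace theorem for the weighted spaces $W^{1,2}_{\frac{1}{2},\frac{3s-1}{2s}}(H)$ and $W^{1,2}_{\frac{1-s}{2s},\frac{1}{2}}(H)$. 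Complete accretivity (Corollary~\ref{cor:5}) then follows by specialising $\phi=\phi_{j}(u)=\int_{\Sigma}j(u)\,\td\mu$ for $j\in\mathcal{J}_{0}$. The trade-offs: the paper's route yields the stronger Theorem~\ref{thm:4} essentially for free and sidesteps the pointwise chain rule $(j\circ w)'=j'(w)w'$ in $L^{2}(\Sigma,\mu)$ (the Lipschitz map $\phi'_{\mu}$ acts on $H$ itself, so the chain rule is immediate), whereas your approach is more self-contained for this particular theorem and avoids the change of variable and the weighted Sobolev machinery. One small imprecision: the strong convergence $\varepsilon^{1-2s}w'(\varepsilon)\to -(\Lambda_{s}\varphi-\Lambda_{s}\hat\varphi)$ is not a consequence of the bound~\eqref{eq:30bis2A} but of the membership $t^{1-2s}u'\in W^{1,2}_{s}(H)$ together with Proposition~\ref{propo:1}.
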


As a byproduct of the theory developed in this paper, we obtain
existence and uniqueness of the following abstract \emph{Robin problem
  associated with the Bessel-type operator $A_{1-2s}$},
\begin{equation}
    \label{eq:1Robin}
    \begin{cases}
      \mbox{}\hspace{2.6cm} A_{1-2s}u(t) \ni 0& \text{in $H$ for
        almost every }t>0,\\
     -\displaystyle\lim_{t \to 0+} t^{1-2s}u'(t) +\lambda u(0)=\varphi
     &\text{on $H$,}     
    \end{cases}
\end{equation}
for any $\lambda>0$ and $\varphi\in D(A)$. Here, we use the following
notion of solutions of problem~\eqref{eq:1Robin}.

\begin{definition}
  For given $\varphi\in H$, a function $u : [0,+\infty)\to H$ is
  called a \emph{solution} of Robin problem~\eqref{eq:1Robin} if
  $u \in C([0,+\infty);H)$ is a strong solution of~\eqref{eq:79} and
  \begin{displaymath}
    -\displaystyle\lim_{t \to 0+} t^{1-2s}u'(t)+\lambda u(0)=\varphi\qquad
    \text{exists in $H$.}
    \end{displaymath}
\end{definition}

Now, our well-posedness result to the abstract Robin
problem~\eqref{eq:1Robin} reads as follows.

%\txtr{Check later again the statement of the theorem below. I think
% that we have well-posedness even for $\varphi\in \overline{D(A)}^{\mbox{}_{H}}$.}

\begin{theorem}
  \label{thm:2Robin} %Robin
  Let $A$ be a maximal monotone operator on $H$ with
  $0\in \textrm{Rg}(A)$. Then, for every $0<s<1$, $\lambda>0$, and
  $\varphi\in D(A)$, there is a unique solution $u\in L^{\infty}(H)$
  of Robin problem~\eqref{eq:1Robin} satisfying~\eqref{eq:87A},
  \eqref{eq:30bis2A}-\eqref{eq:31bis2A} and
  \eqref{eq:81}-\eqref{eq:82}. In particular,
  $t^{1-2s}u'\in W^{1,2}_{s}(H)$, the function
  $t\mapsto \norm{u(t)}_{H}^{2}$ is convex, bounded and decreasing on
  $[0,+\infty)$, and for every two solutions $u$ and $\hat{u}$
  of~\eqref{eq:1Robin} respectively with boundary value $\varphi$ and
  $\hat{\varphi}\in D(A)$, one has that~\eqref{eq:83} holds.
%   In particular, $T_{t}\varphi:=u(t)$ for every $t\ge 0$, and $\varphi\in D(A)$
%  with $u$ being the unique solution of~\eqref{eq:1bis} defines a
%  strongly continuous semigroup $\{T_{t}\}_{t\ge 0}$ of
%  contractions $T_{t} : \overline{D(A)}\to \overline{D(A)}$ and the
%  infinitesimal generator
%  \begin{displaymath}
%    -\Lambda_{s}\varphi\ni\lim_{t\to
%      0+}t^{1-2s}u\qquad\text{for every $\varphi\in D(A)$.}
% \end{displaymath}
%\txtr{Integration by parts formula, semigroup to the Robin problem?}
\end{theorem}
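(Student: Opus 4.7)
My plan is to reduce the Robin problem~\eqref{eq:1Robin} to the
Dirichlet problem~\eqref{eq:1bis} via the identity
$-\lim_{t\to 0+}t^{1-2s}u'(t)=\Lambda_{s}u(0)$ provided by
Theorem~\ref{thm:DtN}: a function $u$ solves~\eqref{eq:1Robin} with
datum $\varphi$ if and only if it is the Dirichlet solution of
Theorem~\ref{thm:1} whose trace $\psi:=u(0)\in D(\Lambda_{s})$
satisfies the algebraic inclusion
\begin{equation}\label{eq:RobinAlg}
\lambda\,\psi+\Lambda_{s}\psi\ni\varphi.
\end{equation}
Uniqueness is immediate from this reduction: by Theorem~\ref{thm:DtN}
the operator $\lambda I+\Lambda_{s}$ is $\lambda$-strongly monotone and
therefore injective, so two Robin solutions with the same datum share
their trace $u(0)=\hat u(0)$, and uniqueness in Theorem~\ref{thm:1}
forces $u\equiv\hat u$. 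The comparison~\eqref{eq:83} for two data
transfers from the corresponding Dirichlet comparison applied to the
traces $\psi,\hat\psi\in D(A)$.

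The heart of the proof, and the main obstacle, is the existence of
$\psi\in D(A)$ satisfying~\eqref{eq:RobinAlg}. A direct use of the
range condition $D(A)\subseteq\textrm{Rg}(I+\mu\Lambda_{s})$ from
Theorem~\ref{thm:DtN} with $\mu=1/\lambda$ only delivers a $\psi$ with
$\lambda\psi+\Lambda_{s}\psi\ni\lambda\varphi$, and the naive rescaling
would require $\varphi/\lambda\in D(A)$, which need not hold for
genuinely nonlinear $A$. My plan is to bypass this by Yosida
regularisation: replacing $A$ by $A_{n}:=A_{1/n}$, which is everywhere
defined and $n$-Lipschitz, turns the Bessel equation into a singular
ODE on $H$ solvable by classical means, and the associated DtN operator
$\Lambda_{s}^{n}$ is continuous and monotone on all of $H$, hence
maximal; by Minty's theorem $\lambda I+\Lambda_{s}^{n}$ is surjective
and the regularised Robin equation admits a trace $\psi_{n}\in H$ with
$\lambda\psi_{n}+\Lambda_{s}^{n}\psi_{n}=\varphi$. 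Subtracting the
corresponding identity at any $y\in A^{-1}(\{0\})$ (for which
$A_{n}y=0$ and $\Lambda_{s}^{n}y=0$) and using monotonicity of
$\Lambda_{s}^{n}$ yields the uniform bound
$\|\psi_{n}-y\|_{H}\le\|\varphi-\lambda y\|_{H}/\lambda$; combined with
the estimates of Theorem~\ref{thm:1} applied to the regularised
Dirichlet solution $u_{n}$ with boundary value $\psi_{n}$, this allows
extraction via weak compactness and graph-closedness in $H\times H_{w}$
of limits $\psi_{n}\to\psi\in D(A)$ and $u_{n}\to u$, with $u$ a Robin
solution for the original $A$.

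Once $\psi\in D(A)$ is secured, the estimates~\eqref{eq:87A},
\eqref{eq:81}--\eqref{eq:82}
and~\eqref{eq:30bis2A}--\eqref{eq:31bis2A} for $u$ follow directly from
Theorem~\ref{thm:1} with $\varphi$ there replaced by $\psi$, and in
particular $t^{1-2s}u'\in W^{1,2}_{s}(H)$. The convexity, boundedness
and monotone decay of $\phi(t):=\|u(t)\|_{H}^{2}$ I would obtain by
differentiating $\phi$ and exploiting $(t^{1-2s}u')'\in t^{1-2s}Au$: a
short computation gives
\begin{equation*}
\bigl(t^{1-2s}\phi'(t)\bigr)'
= 2t^{1-2s}\|u'(t)\|_{H}^{2}+2t^{1-2s}\bigl(Au(t),u(t)\bigr)_{H},
\end{equation*}
and monotonicity of $A$ at $y\in A^{-1}(\{0\})$ together
with~\eqref{eq:87A} then delivers the required convexity and decay
properties.
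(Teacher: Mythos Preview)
Your approach differs markedly from the paper's, which is a one-line reduction: the Robin condition in~\eqref{eq:1Robin} is the special case of the general boundary condition in~\eqref{eq:1} obtained by taking $j(v)=\tfrac{\lambda}{2}\norm{v-(\tfrac{1}{\lambda}-1)\varphi}_{H}^{2}$, and Theorem~\ref{thm:1bis} (which is available to you) then delivers existence, uniqueness, all the stated estimates, and~\eqref{eq:83} in one stroke, with $\varphi$ already appearing in the correct slot. Your route through the Dirichlet problem and the algebraic equation $\lambda\psi+\Lambda_{s}\psi=\varphi$ is substantially more circuitous and has two genuine gaps.

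First, the claim that the limit trace satisfies $\psi\in D(A)$ is not justified. Weak compactness of $(\psi_{n})$ places $\psi$ only in $H$ (or in $\overline{D(A)}^{\mbox{}_{H}}$); the ``graph-closedness'' you invoke would at best give $\psi\in D(\overline{\Lambda}_{s})$, since you never produce a uniformly bounded sequence $w_{n}\in A\psi_{n}$ with which to invoke demiclosedness of $A$ itself. Without $\psi\in D(A)$, Theorem~\ref{thm:1} does not furnish~\eqref{eq:30bis2A}--\eqref{eq:31bis2A}. Second, even granting $\psi\in D(A)$, applying Theorem~\ref{thm:1} with boundary datum $\psi$ yields bounds in terms of $\norm{A^{0}\psi}_{H}$ and $\norm{\psi-y}_{H}$, whereas Theorem~\ref{thm:2Robin} asserts~\eqref{eq:30bis2A}--\eqref{eq:31bis2A} with the \emph{Robin} datum $\varphi$; relating $\norm{A^{0}\psi}_{H}$ to $\norm{A^{0}\varphi}_{H}$ for a nonlinear $A$ is an additional step you have not addressed. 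The paper's route through Theorem~\ref{thm:1bis} avoids both issues, because the a~priori estimates in the proof of Theorem~\ref{thm:2} are established directly in terms of the $\varphi$ appearing in the boundary condition of~\eqref{eq:1}.
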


\begin{remark}
  At the moment, we neither know how to obtain existence and
  uniqueness of solutions to the abstract Robin
  problem~\eqref{eq:1Robin} for boundary data
  $\varphi\in \overline{D(A)}^{\mbox{}_{H}}$ nor the continuous
  dependence of the solutions $u$ of~\eqref{eq:1Robin} on the boundary
  data $\varphi$. This result would not only complete the theory on
  studying the Robin problem, but also imply that the DtN map
  $\Lambda_{s}$ restricted on the closure
  $\overline{D(A)}^{\mbox{}_{H}}$ is a well-defined mapping.
\end{remark}

% Note that for every 
This paper is organized as follows. In the subsequent section, we
introduce the framework and notations used throughout this paper and
state some preliminary results which will be useful for establishing
existence and uniqueness of Dirichlet problem~\eqref{eq:1bis} and
Robin problem~\eqref{eq:1Robin}. In Section~\ref{sec:exuniqueness}, we
then prove the existence and uniqueness of equation~\eqref{eq:79}
equipped with a more abstract boundary-value problem for boundary data
$\varphi\in D(A)$, which includes~\eqref{eq:1bis}
and~\eqref{eq:1Robin} as special cases (cf problem~\eqref{eq:1}). The
well-posedness of Dirichlet problem~\eqref{eq:1bis} for boundary data
$\varphi\in \overline{D(A)}^{\mbox{}_{H}}$ follows from the
inqualities~\eqref{eq:87A}-\eqref{eq:82} and~\eqref{eq:83}. We provide
a short proof of Theorem~\ref{thm:1} and Theorem~\ref{thm:2Robin} at
the beginning of Section~\ref{sec:exuniqueness}. Our proof of
Theorem~\ref{thm:1bis} is based on the change of
variable~\eqref{eq:15} which transforms equation~\eqref{eq:79}
into~\eqref{eq:80}. We emphasize that our proof demonstrates very well
that the generalization of~\cite{MR0317123} (case $s=1/2$) to the full
range $0<s<1$ is not trivial and uses techniques and spaces from
interpolation theory. The statements of Theorem~\ref{thm:DtN} follow
from Corollary~\ref{cor:1}. In Section~\ref{sec:interpol}, we show
that if for a given convex, proper, lower semicontinuous functional
$\phi : H\to \R\cup\{+\infty\}$, the operator $A$ is
$\partial_{H}\phi$-monotone on $H$ (for a definition see below in the
next section), then the DtN operator $\Lambda_{s}$ has also this
property (see~Theorem~\ref{thm:4}). This results has several
applications; one can deduce invariance principles
(cf~\cite{MR0348562}), comparison principles and
$L^{\psi}$-contractivity properties of the semigroup
$\{T_{s}(t)\}_{t\ge 0}$ generated by $-\overline{\Lambda}_{s}$ on
$L^{2}(\Sigma,\mu)$, for a $\sigma$-finite measure space
$(\Sigma,\mu)$ (see Corollary~\ref{cor:5}). Here, $L^{\psi}$
abbreviates the Orlicz space $L^{\psi}(\Sigma,\mu)$ for a given
$N$-function $\psi$. Thus, the statement of
Theorem~\ref{thm:DtNinterpol} follows from Corollary~\ref{cor:5}. We
conclude this paper with an application on the Leray-Lions operator
$A=-\divergence (a(x,\nabla u))$ (see Section~\ref{sec:app}).

%%%%%%%%%%%%%%%%%%%%%%%%%%%%%%%%%%%%%%%%%%%%%%%%%%%%%
%
%
%                           Preliminaries
%
%
%%%%%%%%%%%%%%%%%%%%%%%%%%%%%%%%%%%%%%%%%%%%%%%%%%%%%

\section{Preliminaries}
\label{sec:pre}
Throughout this article, $(H,(\cdot,\cdot)_{H})$ denotes a real
Hilbert space with inner product $(\cdot,\cdot)_{H}$, and we use to
write $\R_+$ to denote  $(0, +\infty)$ and
$\overline{\R}_+ := [0, +\infty]$.

For an operator $A$ on $H$, the \emph{minimal selection} $A^{0}$ of $A$ is
given by
\begin{displaymath}
  A^0 := \Big\{(u,v) \in A\,\Big\vert \,
  \norm{v}_{H}=\displaystyle\min_{w\in Au}\norm{w}_{H} \Big\}
\end{displaymath}
and for a convex functional $\E : H \to
\R\cup\{+\infty\}$, the \emph{subdifferential operator}
\begin{displaymath}
  \partial_{H}\E:=
  \Big\{(u,h)\in H\times H\,\Big\vert\, \E(u+v)-\E(u)\ge
  (h,v)_{H}\text{ for all }v\in H\Big\}.
\end{displaymath}
The property that $A$ being monotone is equivalent to that
for every $\lambda>0$, the \emph{resolvent operator}
$J_{\lambda}^{A}:=(I+\lambda A)^{-1}$ of $A$ is \emph{contractive in $H$}:
\begin{displaymath}
 \norm{J_{\lambda}^{A}u-J_{\lambda}^{A}\hat{u}}_{H}\le \norm{u-\hat{u}}_{H}
\qquad\text{for every $u$, $\hat{u}\in \textrm{Rg}(I+\lambda A)$.}
\end{displaymath}

Further, we call a functional $j : H \to \overline{\R}_+$ 
\emph{strongly coercive} if
\begin{equation}
  \label{eq:47}
  \lim_{|v| \rightarrow +\infty} \frac{j(v)}{|v|} = +\infty.
\end{equation}
If $j : H \to \R\cup \{+\infty\}$ is convex, proper, and lower semicontinuous
on $H$, then an operator $A$ on $H$ is called \emph{$\partial_{H} j$-monotone},
if for every $\lambda>0$, the resolvent $J_{\lambda}^{A}$ of $A$ satisfies
\begin{displaymath}
  j(J_{\lambda}^{A}u - J^{A}_{\lambda}\hat{u}) \le j(u-\hat{u})
\qquad\text{for every $u$, $\hat{u}\in \textrm{Rg}(I+\lambda A)$.}
\end{displaymath}
In addition, if the subdifferential operator $\partial_{H}j$ of $j$ is
a mapping $\partial_{H}j : D(\partial_{H}j)\to H$, then we call
$\partial_{H}j$ \emph{weakly continuous} if $\partial_{H}j$ maps
weakly convergent sequences to weakly convergent sequences.
%
% Ad here further definitions of semigroups and completely accretive operators.
%

The notion of \emph{completely accretive operators} was introduced
in~\cite{MR1164641} by Crandall and B\'enilan and further developed
in~\cite{CoulHau2017}. Following the same notation as in these two
references, $\mathcal{J}_{0}$ denotes the set of all convex, lower
semicontinuous functions $j : \R\to \overline{\R}_+$ satisfying
$j(0)=0$. Let $(\Sigma,\mu)$ be a $\sigma$-finite measure space and
$M(\Sigma,\mu)$ the space (of all classes) of measurable real-valued
functions on $\Sigma$.

\begin{definition}
  A mapping $S : D(S)\to M(\Sigma,\mu)$ with domain $D(S)\subseteq
  M(\Sigma,\mu)$ is called a \emph{complete contraction} if 
  \begin{displaymath}
    \int_{\Sigma}j(Su-S\hat{u})\,\textrm{d}\mu\le 
    \int_{\Sigma}j(u-\hat{u})\,\textrm{d}\mu
  \end{displaymath}
  for all $j\in \mathcal{J}_{0}$ and every $u$, $\hat{u}\in D(S)$.
\end{definition}

Choosing $j(\cdot)=\abs{[\cdot]^{+}}^{q}\in \mathcal{J}_{0}$ if
$1\le q<\infty$ and $j(\cdot)=[[\cdot]^{+}-k]^{+}\in \mathcal{J}_{0}$
for $k\ge 0$ large enough if $q=\infty$ shows that each complete
contraction $S$ is $T$-contractive in $L^{q}(\Sigma,\mu)$ for every
$1\le q\le \infty$. And by choosing
$j(\cdot)=\abs{[\cdot]^{+}}^{q}\in \mathcal{J}_{0}$ for any
$1\le q<\infty$, a complete contraction $S$ is
\emph{order preserving}, that is, for every $u$, $\hat{u}\in D(S)$
satisfying $u\le \hat{u}$ a.e. on $\Sigma$, one has that
$Su\le S\hat{u}$. In fact, the following characterization holds.

\begin{proposition}[{\cite{MR1164641}}]\label{prop:charact-cc}
  Suppose that the mapping $S : D(S)\to M(\Sigma,\mu)$ with domain $D(S)\subseteq
  M(\Sigma,\mu)$ satisfies the following: for every $u$, $\hat{u}\in
  D(S)$, $k\ge 0$, one has either
  \begin{displaymath}
    \min\{u,(\hat{u}+k)\}\in D(S)\text{ or }\max\{(u-k),\hat{u}\}\in D(S).
  \end{displaymath}
  Then, $S$ is a complete contraction if and only if $S$ is order
  preserving, and a $L^{1}$- and $L^{\infty}$-contraction.
\end{proposition}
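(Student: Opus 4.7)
The plan is to prove the two directions of the equivalence separately. The forward direction -- every complete contraction $S$ is order-preserving and both an $L^{1}$- and an $L^{\infty}$-contraction -- is essentially recorded in the remarks preceding the proposition: taking $j=\abs{\cdot}\in\mathcal{J}_{0}$ gives $L^{1}$-contractivity, $j=\abs{[\cdot]^{+}}\in\mathcal{J}_{0}$ gives the order-preserving property, and $j=[[\cdot]^{+}-k]^{+}\in\mathcal{J}_{0}$ for all $k\ge 0$ upgrades this to $L^{\infty}$-contractivity (after treating $-u$ analogously). For the converse I would exploit the integral representation of $\mathcal{J}_{0}$-functions: any convex, nonnegative $j$ with $j(0)=0$ has the form
\begin{displaymath}
  j(r)=\int_{[0,\infty)}[r-k]^{+}\,d\mu_{+}(k)+\int_{[0,\infty)}[-r-k]^{+}\,d\mu_{-}(k),\qquad r\in\R,
\end{displaymath}
with $\mu_{\pm}$ positive Borel measures on $[0,\infty)$ arising from the right and left Lebesgue--Stieltjes measures of $j'$. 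By Tonelli it then suffices to establish, for every $k\ge 0$ and $u,\hat u\in D(S)$, the ``building-block'' inequality $\int_{\Sigma}[Su-S\hat u-k]^{+}d\mu\le\int_{\Sigma}[u-\hat u-k]^{+}d\mu$; the analogous inequality for $[-r-k]^{+}$ follows by interchanging $u$ and $\hat u$.

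As preparation I would first upgrade the standing hypotheses to their $T$-contractive counterparts
\begin{displaymath}
  \mathrm{(a)}\;\int_{\Sigma}[Su-S\hat u]^{+}d\mu\le \int_{\Sigma}[u-\hat u]^{+}d\mu,\qquad \mathrm{(b)}\;\lnorm{[Su-S\hat u]^{+}}_{\infty}\le \lnorm{[u-\hat u]^{+}}_{\infty},
\end{displaymath}
both of which follow from the closure hypothesis applied with $k=0$: in the case $w'=\max\{u,\hat u\}\in D(S)$, order-preservation yields $Sw'\ge Su$ and $Sw'\ge S\hat u$, so $[Su-S\hat u]^{+}\le Sw'-S\hat u$ pointwise, and applying the $L^{1}$- respectively $L^{\infty}$-contractivity of $S$ to the pair $(w',\hat u)$, together with $w'-\hat u=[u-\hat u]^{+}$, produces (a) and (b); the alternative $\min$-case is symmetric. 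With (a) and (b) in hand, for arbitrary $u,\hat u\in D(S)$ and $k>0$, the closure hypothesis supplies either $w=\max\{u-k,\hat u\}\in D(S)$ or $v=\min\{u,\hat u+k\}\in D(S)$. In the $w$-case one checks $w\ge\hat u$ with $w-\hat u=[u-\hat u-k]^{+}$, and $u\le w+k$ pointwise, so that applying (b) to $(u,w)$ produces the \emph{pointwise} bound $[Su-Sw]^{+}\le k$, i.e.\ $Su-k\le Sw$ a.e. on $\Sigma$. Combined with order-preservation ($Sw\ge S\hat u$) this gives $[Su-S\hat u-k]^{+}\le Sw-S\hat u$ a.e.; integrating and applying (a) to $(w,\hat u)$ closes the inequality. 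The $v$-case is handled symmetrically with the roles of (a) and (b) reversed.

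The key delicate point I anticipate is the manoeuvre just described, where (b) is used not as a global $L^{\infty}$-bound but in its $T$-contractive pointwise form to yield the one-sided shift $Su\le Sw+k$ a.e.; it is precisely this shift that sandwiches $[Su-S\hat u-k]^{+}$ between $0$ and the nonnegative quantity $Sw-S\hat u$, whose $L^{1}$-mass is then sharply controlled by (a). Once the building-block inequality is in place, the final step -- integrating against $\mu_{+}$ and $\mu_{-}$ via Tonelli -- is routine and recovers the complete-contraction inequality for arbitrary $j\in\mathcal{J}_{0}$.
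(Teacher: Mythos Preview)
The paper does not supply its own proof of this proposition: it is stated with a citation to B\'enilan--Crandall~\cite{MR1164641} and invoked as a known result, so there is no in-paper argument to compare against. Your proposal reproduces essentially the standard proof from that reference---the forward direction by specialising $j$, and the converse by reducing (via the integral representation of $\mathcal{J}_{0}$-functions) to the building-block inequality $\int_{\Sigma}[Su-S\hat u-k]^{+}\,\td\mu\le\int_{\Sigma}[u-\hat u-k]^{+}\,\td\mu$, which you then obtain from the closure hypothesis together with the $T$-contractive upgrades (a) and (b). The argument is correct; the only cosmetic point is that in the $w$-case you can bypass (a) and use the plain $L^{1}$-contraction on the pair $(w,\hat u)$ directly, since $w\ge\hat u$ and $Sw\ge S\hat u$ already give $Sw-S\hat u=\abs{Sw-S\hat u}$ and $w-\hat u=\abs{w-\hat u}$.
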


\begin{definition}\label{def:completely-accretive}
  An operator $A$ on $M(\Sigma,\mu)$ is called \emph{completely
    accretive} if for every $\lambda>0$, the resolvent operator
  $J_{\lambda}$ of $A$ is a complete contraction. An operator $A$ on $M(\Sigma,\mu)$ is said
  to be \emph{$m$-completely accretive} if $A$ completely accretive
  and the range condition $\textrm{Rg}(I+A)=L^{2}(\Sigma,\mu)$ holds. A semigroup
  $\{T_{t}\}_{t\ge 0}$ on a subset a closed subset $C$ of
  $M(\Sigma,\mu)$ is called \emph{order preserving} if each map
  $T_{t}$ is order preserving.
\end{definition}

Next, we first  briefly recall the notion of \emph{Orlicz spaces}. 
Following \cite[Chapter 3]{MR1113700}, a continuous function $\psi :
[0,+\infty) \to [0,+\infty)$ is an {\em $N$-function} if it is convex, $\psi(s)=0$ if
and only if $s=0$, $\lim_{s\to 0+} \psi (s) / s = 0$, and
$\lim_{s\to\infty} \psi (s) / s = \infty$. 

\begin{definition}
  \label{def:orlicz}
  Given an $N$-function $\psi$, the {\em Orlicz
    space} %$L^\psi(\Sigma,\mu)$ is defined by
  \begin{displaymath}
    L^\psi (\Sigma,\mu) := \Bigg\{ u : \Sigma \to\R\, \Bigg\vert\;
    \text{$u$ measurable }\&\,\int_\Sigma
    \psi (\frac{|u|}{\alpha} )\; \mathrm{d}\mu <\infty \text{ for some }
    \alpha >0\Bigg\}
  \end{displaymath}
  and equipped with the Orlicz-Minkowski norm
  \begin{displaymath}
    \| u\|_{L^\psi} := \inf \Bigg\{ \alpha >0 \;\Bigg\vert\; \int_\Sigma \psi
    (\frac{|u|}{\alpha} )\; \mathrm{d}\mu\leq 1 \Bigg\} .
  \end{displaymath}
\end{definition}

For $1\le q\le \infty$, we write $L^{q}_{loc}(H)$ and
$L^{q}(H)$ to denote the  vector-valued Lebesgue spaces $L^{q}_{loc}(\R_{+};H)$,
$L^{q}(\R_{+};H)$. The derivative $u'$ of a function $u\in L^{1}_{loc}(H)$
is usually understood in the \emph{distributional sense}. More
precisely, a function $w\in L^{1}_{loc}(H)$ is called the \emph{weak
  derivative} of $u\in L^{1}_{loc}(H)$ if
\begin{displaymath}
  \int_{0}^{+\infty}u(t)\,\xi'(t)\,\dt=-\int_{0}^{+\infty}w(t)\, \xi(t)\,\dt
\end{displaymath}
for all test functions $\xi\in C_{c}^{\infty}(\R_{+})$. A
function $w\in L^{1}_{loc}(H)$ satisfying the latter equation for all
$\xi\in C_{c}^{\infty}(\R_{+})$, is unique and so, one writes
$u'=w$. We denote by $W^{1,1}_{loc}(H)$ the space of all $u\in
L^{1}_{loc}(H)$ with a weak derivative $u'\in L^{1}_{loc}(H)$.

Next, let $0<s<1$. Then $L^{2}_{s}(H)$ denotes the space of all $u\in L^{1}_{loc}(H)$
satisfying $t^{s}u\in L^{2}_{\star}(H)$. We equip the \emph{first order weighted Sobolev space}
\begin{displaymath}
  W_{s}^{1,2}(H)=\Big\{u\in W^{1,1}_{loc}(H)\,\Big\vert\,
  u,\; u'\in L^{2}_{s}(H)\Big\}.
\end{displaymath}
with the inner product
\begin{displaymath}
  (u,\hat{u})_{W^{1,2}_{s}(H)}:=\int_{0}^{+\infty}
\big(u(t)\,\hat{u}(t) +u'(t)\,\hat{u}'(t)\big)\,  t^{2s}\,\frac{\dt}{t}.
\end{displaymath}
Then, $W_{s}^{1,2}(H)$ is a Hilbert space and we denote by
$\norm{\cdot}_{W^{1,2}_{s}(H)}$ the induced norm of $W_{s}^{1,2}(H)$.
Further, throughout this paper
\begin{displaymath}
  \underline{s}:=\frac{1-s}{2s}\qquad\text{and}\qquad
  \overline{s}:=\frac{3s-1}{2s}\qquad\text{for every $0<s<1$.}
\end{displaymath}
Then, $L^{2}_{\underline{s}}(H)$ denotes
the space of all $v\in
L^{1}_{loc}(H)$ satisfying $z^{\frac{1-s}{2s}}v\in
L^{2}_{\star}(H)$ equipped with the inner product
\begin{equation}
  \label{eq:39}
 (v,w)_{L^{2}_{\underline{s}}(H)}:=\int_{0}^{\infty}(z^{\frac{1-s}{2s}}
 	v(z),z^{\frac{1-s}{2s}}w(z)) \,\frac{\dz}{z}=\int_{0}^{\infty}(
 	v(z),w(z)) z^{\frac{1-2s}{s}}\,\dz
\end{equation}
for every $v$, $w\in L^{2}_{\underline{s}}(H)$. Similarly, we write
$L^{2}_{\overline{s}}(H)$ to denote the space of all
$v\in L^{1}_{loc}(0,+\infty;H)$ satisfying
$z^{\frac{3s-1}{2s}}v\in L^{2}_{\star}(H)$. 

The spaces
$W^{1,2}_{\frac{1-s}{2s},\frac{1}{2}}(H)$ and
$W^{1,2}_{\frac{1}{2},\frac{3s-1}{2s}}(H)$ are
\emph{first order Sobolev spaces with mixed weights} defined by
\begin{displaymath}
    W^{1,2}_{\frac{1-s}{2s},\frac{1}{2}}(H)=\Big\{v\in L^{1}_{loc}(H)\,\Big\vert\,
    z^{\frac{1-s}{2s}}v\in L^{2}_{\star}(H),\; z^{\frac{1}{2}}v'\in L^{2}_{\star}(H)\Big\}
\end{displaymath}
and 
\begin{displaymath}
    W^{1,2}_{\frac{1}{2},\frac{3s-1}{2s}}(H)=\Big\{v\in L^{1}_{loc}(H)\,\Big\vert\,
    z^{\frac{1}{2}}v\in L^{2}_{\star}(H),\; z^{\frac{3s-1}{2s}}v'\in L^{2}_{\star}(H)\Big\}.
\end{displaymath}
In addition, we will employ the \emph{second order Sobolev space with mixed weights}
\begin{displaymath}
    W^{2,2}_{\frac{1-s}{2s},\frac{1}{2},\frac{3s-1}{2s}}(H)
    :=\Big\{v\in W^{1,2}_{\frac{1-s}{2s},\frac{1}{2}}(H)\,\Big\vert\,
    z^{\frac{3s-1}{2s}}v''\in L^{2}_{\star}(H)\Big\}.
\end{displaymath}
Each of these spaces
equipped with its natural inner product and the induced norms
\begin{align*}
  \norm{v}_{W^{1,2}_{\frac{1-s}{2s},\frac{1}{2}}(H)}&:=\left(\norm{z^{\frac{1-s}{2s}}v}^{2}_{L^{2}_{\star}(H)}
      +\norm{z^{\frac{1}{2}}v'}^{2}_{L^{2}_{\star}(H)}
    \right)^{1/2}\\
\norm{v}_{W^{1,2}_{\frac{1}{2},\frac{3s-1}{2s}}(H)}&:=\left(\norm{z^{\frac{1}{2}}v}^{2}_{L^{2}_{\star}(H)}
      +\norm{z^{\frac{3s-1}{2s}}v'}^{2}_{L^{2}_{\star}(H)}
    \right)^{1/2}\\
    \norm{v}_{W^{2,2}_{\frac{1-s}{2s},\frac{1}{2},\frac{3s-1}{2s}}(H)}
   &:=\left(\norm{v}^{2}_{W^{1,2}_{\frac{1-s}{2s},\frac{1}{2}}(H)} 
     +\norm{z^{\frac{3s-1}{2s}}v''}^{2}_{L^{2}_{\star}(H)}\right)^{1/2}
\end{align*}   
 is a Hilbert space.\medskip

 The next proposition shows that for functions $u\in W^{1,2}_{s}(H)$,
 the initial value $u(0)\in H$. Note, this proposition is a special
 case of~\cite[Proposition~3.2.1]{MR3772192} since for $X=Y=H$, the
 interpolation space $[X,Y]_{\theta}=H$ for every $\theta\in (0,1)$.

\begin{proposition}[{\cite[Proposition~1.2.10]{MR1329547}}]\label{propo:1}
  Let $0<s<1$. Then for every $u\in W^{1,2}_{s}(H)$, the limit
  $u(0):=\lim_{t\to0+}u(t)$ exists in $H$. Moreover, the trace map
  \begin{displaymath}
    \textrm{Tr} : W^{1,2}_{s}(H)\to H,\; u\mapsto u(0)\text{ is
      continuous, surjective}
  \end{displaymath}
  and there are $c_{1}$, $c_{2}>0$ such that
  \begin{displaymath}
    c_{1}\,\norm{x}_{H}\le \inf_{u\in W^{1,2}_{s}(H) : u(0)=x}
    \norm{u}_{W^{1,2}_{s}(H)}\le c_{2}\norm{x}_{H}
  \end{displaymath}
   for every $x\in H$.
\end{proposition}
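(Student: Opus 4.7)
My plan is to establish three items in sequence: existence of $u(0)=\lim_{t\to 0+}u(t)$ in $H$; the continuity bound $\|u(0)\|_H\le c\,\|u\|_{W^{1,2}_s(H)}$; and surjectivity together with the upper bound on the infimum norm. For the existence of the trace, I would apply H\"older's inequality with weights $r^{2s-1}$ and $r^{1-2s}$: for $0<\hat{t}<t$,
\[
\|u(t)-u(\hat{t})\|_H\le\int_{\hat{t}}^{t}\|u'(r)\|_H\,dr\le \|u\|_{W^{1,2}_s(H)}\Bigl(\int_{\hat{t}}^{t}r^{1-2s}\,dr\Bigr)^{1/2}.
\]
Since $0<s<1$, the exponent $1-2s>-1$, so $r^{1-2s}$ is integrable at $0$ and $\int_0^{t}r^{1-2s}\,dr=\frac{t^{2-2s}}{2-2s}\to 0$ as $t\to 0+$. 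Hence $\{u(t)\}$ is Cauchy in $H$ as $t\to 0+$, and the limit $u(0)$ exists in $H$.

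For the continuity bound, for each fixed $T>0$ I would use the identity $u(0)=u(T)-\int_0^{T}u'(r)\,dr$, estimate the integral by the H\"older bound above with $\hat{t}=0$, and then control $\|u(T)\|_H$ by integrating over $T$ in a fixed compact interval $[a,b]\subset(0,\infty)$; there the weight $T^{2s-1}$ is bounded above and below by positive constants, so a further Cauchy--Schwarz estimate yields $\int_a^{b}\|u(T)\|_H\,dT\le C\,\|u\|_{L^2_s(H)}\le C\,\|u\|_{W^{1,2}_s(H)}$. Combining the two estimates gives $\|u(0)\|_H\le c\,\|u\|_{W^{1,2}_s(H)}$, so $\textrm{Tr}$ is continuous. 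This also provides the lower bound $c_{1}\|x\|_H\le \inf_{u(0)=x}\|u\|_{W^{1,2}_s(H)}$.

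For surjectivity and the upper bound, I would build an explicit continuous right inverse. Fix $\eta\in C^1(\overline{\R}_+)$ with $\eta(0)=1$ and compact support in $[0,1]$, and for $x\in H$ set $E(x)(t):=\eta(t)\,x$. Then $E(x)(0)=x$ and
\[
\|E(x)\|_{W^{1,2}_s(H)}^{2}=\|x\|_H^{2}\int_0^{\infty}\bigl(\eta(t)^2+\eta'(t)^2\bigr)t^{2s-1}\,dt=:c_{0}\,\|x\|_H^{2},
\]
where $c_{0}<\infty$ since $2s-1>-1$ and $\eta,\eta'$ are bounded with compact support. This furnishes surjectivity of $\textrm{Tr}$ and the upper bound with $c_{2}=\sqrt{c_{0}}$.

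The only delicate point is the singular regime $0<s<1/2$, where the weight $t^{2s-1}$ blows up at $0$. However, the two integrability requirements the argument relies on, namely $\int_{0}^{1} r^{1-2s}\,dr<\infty$ (to make the H\"older bound in the trace estimate meaningful) and $\int_{0}^{1} t^{2s-1}\,dt<\infty$ (to render $E(x)$ admissible), are both ensured by the single condition $0<s<1$, so no additional work is required in the singular regime.
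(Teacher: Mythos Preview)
Your argument is correct. The existence of the trace via the weighted H\"older estimate, the averaging over $T\in[a,b]$ to obtain continuity, and the explicit right inverse $E(x)=\eta(\cdot)\,x$ all work exactly because $0<s<1$ guarantees both $\int_0^1 r^{1-2s}\,dr<\infty$ and $\int_0^1 t^{2s-1}\,dt<\infty$, as you note.

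There is no proof in the paper to compare against: the proposition is quoted from \cite[Proposition~1.2.10]{MR1329547} and identified as a special case of \cite[Proposition~3.2.1]{MR3772192} (with $X=Y=H$, so that the interpolation space $[X,Y]_\theta=H$). Those references treat the general trace theory for weighted Sobolev spaces with values in two different Banach spaces $X\hookrightarrow Y$, where the trace lands in the real interpolation space $(X,Y)_{s,2}$; your argument is the elementary specialisation to the case $X=Y=H$, where interpolation is trivial and the direct estimates you give suffice. The advantage of your approach is that it is entirely self-contained and avoids any interpolation machinery; the advantage of the cited references is that they cover the genuinely two-space setting needed, for instance, to characterise $D(\Lambda_s)$ in the linear sectorial case.
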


To conclude this preliminary section, we state the following
integration by parts rule from~\cite[Proposition~3.9]{MR3772192}.

\begin{proposition}%[{\cite[Proposition~3.9]{MR3772192}}]
   Let $0<s<1$. For $u\in W^{1,2}_{s}(H)$ and $\xi \in
   W^{1,2}_{1-s}(H)$, the functions $t\mapsto (u'(t),\xi(t))_{H}$ and
   $t\mapsto (u(t),\xi'(t))_{H}$ belong to
   $L^{1}(0,+\infty)$. Moreover, the following integration by parts
   rule holds:
   \begin{displaymath}
     -\int_{0}^{+\infty}(u'(t),\xi(t))_{H}\,\dt=\int_{0}^{+\infty}(u(t),\xi'(t))_{H}\,\dt+(u(0),\xi(0))_{H}.
   \end{displaymath}
\end{proposition}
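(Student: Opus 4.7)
The plan is to apply the product rule for Bochner-Sobolev maps to the scalar function $\phi(t):=(u(t),\xi(t))_{H}$ and then identify its two boundary values at $0$ and $+\infty$ using the weighted integrability of the data together with Proposition~\ref{propo:1}. The key algebraic observation is that the weights $s$ and $1-s$ are complementary in the sense that $s+(1-s)=1$, which is exactly what is needed to cancel the extra factor of $t$ produced by the Haar measure $\dt/t$.

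To establish the $L^{1}$-integrability, I would write
\[
\norm{u'(t)}_{H}\,\norm{\xi(t)}_{H}=\bigl(t^{s}\norm{u'(t)}_{H}\bigr)\bigl(t^{1-s}\norm{\xi(t)}_{H}\bigr)\,t^{-1}
\]
and apply the Cauchy-Schwarz inequality with respect to the Haar measure $\dt/t$ to obtain
\[
\int_{0}^{+\infty}\norm{u'(t)}_{H}\,\norm{\xi(t)}_{H}\,\dt \le \norm{t^{s}u'}_{L^{2}_{\star}(H)}\,\norm{t^{1-s}\xi}_{L^{2}_{\star}(H)},
\]
which is finite since $u\in W^{1,2}_{s}(H)$ and $\xi\in W^{1,2}_{1-s}(H)$. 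The same trick with $(u,\xi')$ in place of $(u',\xi)$, and with $(u,\xi)$ in place of $(u',\xi)$, gives $\int_{0}^{+\infty}\norm{u}_{H}\norm{\xi'}_{H}\,\dt<+\infty$ and $\int_{0}^{+\infty}\norm{u}_{H}\norm{\xi}_{H}\,\dt<+\infty$. By Cauchy-Schwarz in $H$, this proves that $(u',\xi)_{H}$, $(u,\xi')_{H}$ and $(u,\xi)_{H}$ all belong to $L^{1}(0,+\infty)$.

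Since $u,\xi\in W^{1,1}_{\loc}(\R_{+};H)$, the standard product rule (obtained by mollifying in $t$ on any compact subinterval of $\R_{+}$ and passing to the limit) shows that $\phi\in W^{1,1}_{\loc}(\R_{+})$ with $\phi'(t)=(u'(t),\xi(t))_{H}+(u(t),\xi'(t))_{H}$ for a.e.\ $t>0$. Combined with the previous step, $\phi'\in L^{1}(0,+\infty)$, so $\phi$ is absolutely continuous on $[\varepsilon,R]$ for any $0<\varepsilon<R<+\infty$ and the limits $\lim_{t\to 0+}\phi(t)$ and $\lim_{t\to+\infty}\phi(t)$ exist in $\R$. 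Integrating $\phi'$ over $(\varepsilon,R)$ and letting $\varepsilon\to 0^{+}$, $R\to +\infty$ will yield the identity, once the boundary limits are identified.

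By Proposition~\ref{propo:1}, $u(\varepsilon)\to u(0)$ and $\xi(\varepsilon)\to\xi(0)$ strongly in $H$ as $\varepsilon\to 0^{+}$, so continuity of the inner product gives $\phi(\varepsilon)\to (u(0),\xi(0))_{H}$. For the limit at $+\infty$, I would argue by contradiction: if $L:=\lim_{t\to+\infty}\phi(t)\neq 0$, then $\abs{\phi(t)}\ge\abs{L}/2$ for all sufficiently large $t$, which would force $\int_{T}^{+\infty}\abs{\phi(t)}\,\dt=+\infty$, contradicting $\abs{\phi(t)}\le\norm{u(t)}_{H}\norm{\xi(t)}_{H}$ together with $\int_{0}^{+\infty}\norm{u}_{H}\norm{\xi}_{H}\,\dt<+\infty$ from step~1. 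Hence $\phi(R)\to 0$ as $R\to+\infty$, and the integration-by-parts identity follows. The main subtlety is the decay at infinity, which is secured precisely by the exact complementarity $s+(1-s)=1$; without it, the Cauchy-Schwarz estimate in step~1 would not deliver an integrable product $\norm{u}_{H}\norm{\xi}_{H}$, and the boundary term at $+\infty$ could not be pinned down by integrability alone.
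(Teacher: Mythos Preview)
Your argument is correct and entirely self-contained. The paper, however, does not give its own proof of this proposition: it is stated as an import from \cite[Proposition~3.9]{MR3772192} with no argument supplied. So there is nothing to compare at the level of proof strategy for this statement itself.

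That said, the paper does carry out exactly the Cauchy--Schwarz computation you perform in your first step---splitting $t = t^{s}\cdot t^{1-s}$ and applying H\"older with respect to the Haar measure $\dt/t$---in the proof of Lemma~\ref{lem:ibp}\,\eqref{lem:ibp2}, where the analogous integrability assertions are established after the change of variable~\eqref{eq:15}. For the integration-by-parts identity there, the paper again defers to \cite[Proposition~3.9]{MR3772192}. Your contribution over the paper's treatment is therefore precisely the second half: the product-rule argument for $\phi(t)=(u(t),\xi(t))_{H}$, the identification of the boundary value at $0$ via Proposition~\ref{propo:1}, and the observation that $\phi\in L^{1}(0,+\infty)$ forces the existing limit at $+\infty$ to vanish. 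This is the natural elementary route and is likely close to what \cite{MR3772192} does as well.
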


%%%%%%%%%%%%%%%%%%%%%%%%%%%%%%%%%%%%%%%%%%%%%
%
%
%      Existence and uniqueness
%
%
%%%%%%%%%%%%%%%%%%%%%%%%%%%%%%%%%%%%%%%%%%%%%

\section{Well-posedness of second order boundary value problems}
\label{sec:exuniqueness}

In this section, we are concerned with establishing the well-posedness
of the following more general abstract boundary-value problem
 \begin{equation}
    \label{eq:1}
    \begin{cases}
      u''(t) + \frac{1-2s}{t}u'(t) \in Au(t) & \text{for almost
        every }t> 0,\\[7pt]
      \mbox{}\hspace{0,32cm}\displaystyle\lim_{t \to 0_+} 
      t^{1-2s}u'(t) \in \partial j(u(0) - \varphi), &
    \end{cases}
  \end{equation}
  where $j : H \to \overline{\R}_+$ be a convex, strongly coercive,
  lower semicontinuous functional satisfying $j(0) = 0$,
  $\varphi\in \overline{D(A)}^{\mbox{}_{H}}$, and $1<s<1$.

\begin{definition}\label{def:thm1bis}
  For $\varphi\in H$, a function $u : [0,+\infty)\to H$ is called a
  \emph{solutions} of problem~\eqref{eq:1} if $u\in C([0,+\infty);H)$,
  $u$ is a strong solution of~\eqref{eq:79},
  $\lim_{t\to 0+}t^{1-2s}u'(t)$ exists in $H$ and
    \begin{displaymath}
      \displaystyle\lim_{t \to 0_+} t^{1-2s}u'(t) \in \partial j(u(0) - \varphi).
    \end{displaymath}
 \end{definition}

 For our next theorem, we recall (cf~\cite{MR0348562} or
 \cite{MR2582280}) that the \emph{indicator function} $j : H \to
 \overline{\R}_+$ is defined by $j(v):=0$ if $v=0$ and
 $j(v):=+\infty$ if otherwise. The following theorem is the first main
 result of this section.

\begin{theorem}
  \label{thm:1bis}
  Let $A$ be a maximal monotone operator on $H$ with
  $0\in \textrm{Rg}(A)$, and $j : H \to \overline{\R}_+$ be a convex,
  strongly coercive, lower semicontinuous functional satisfying
  $j(0) = 0$ and either $\partial_{H}j : D(\partial_{H}j) \to H$ is a
  weakly continuous mapping or $j$ is the ``indicator
  function''. Assume further that $A$ is $\partial j$-monotone. Then,
  for every $0<s<1$, $\varphi\in D(A)$ and $y\in A^{-1}(\{0\})$, there
  is a unique solution $u\in L^{\infty}(H)$ of problem~\eqref{eq:1}
  satisfying~\eqref{eq:87A}, \eqref{eq:30bis2A}-\eqref{eq:31bis2A} and
  \eqref{eq:81}-\eqref{eq:82}. In particular,
  $t^{1-2s}u'\in W^{1,2}_{s}(H)$, the function
  $t\mapsto \norm{u(t)}_{H}^{2}$ is convex, bounded and decreasing on
  $[0,+\infty)$, and for every two strong solutions $u$ and $\hat{u}\in L^{\infty}(H)$
  of equation~\eqref{eq:79}, one has that~\eqref{eq:83} holds.
  %
  %
  %   satisfying~\eqref{eq:87A}-\eqref{eq:31bis2A}. In particular, $t^{1-2s}u'\in W^{1,2}_{s}(H)$
  %   and~\eqref{eq:81}-\eqref{eq:82} holds.
  %
  %   % \item If $j$ is the indicator function, then for every
  %   % $\varphi\in \overline{D(A)}^{\mbox{}_{H}}$ and $y\in A^{-1}(\{0\})$, there is a unique
  %   % solution $u$ of Dirichlet problem~\eqref{eq:1bis}
  %   % satisfying~\eqref{eq:81}-\eqref{eq:82}.
  %
  % \item For every two solutions $u$ and $\hat{u}$ of~\eqref{eq:1} with
  %   boundary value $\varphi$ and
  %   $\hat{\varphi}\in \overline{D(A)}^{\mbox{}_{H}}$, one has
  %   that~\eqref{eq:83} holds.
  % \end{enumerate}
\end{theorem}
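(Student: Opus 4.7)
The plan is to reduce the singular Bessel-type equation~\eqref{eq:79} to the weighted second-order equation~\eqref{eq:80} via the change of variable~\eqref{eq:15} suggested in the Introduction. In the new variable $z$ the flux $t^{1-2s}u'(t)$ becomes, up to a multiplicative constant, $v'(z)$, so the Robin-type boundary condition in~\eqref{eq:1} simplifies to $v'(0)\in -c_{s}\,\partial_{H} j(v(0)-\varphi)$. This eliminates the first-order term and places the boundary condition at $z=0$ in a classical second-order variational setting, at the price of a degenerate (respectively singular) weight $z^{-(1-2s)/s}$ in front of $v''$, which is precisely what the weighted Sobolev spaces of Section~\ref{sec:pre} are designed to handle.

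I would construct the solution by a double approximation: first replace $A$ by its Yosida approximation $A_{\lambda}$, which is single-valued, Lipschitz, and inherits the $\partial_{H} j$-monotonicity of $A$, and then truncate to a bounded interval $(0,T)$ with the auxiliary endpoint condition $v_{\lambda,T}(T)=y$ for some fixed $y\in A^{-1}(\{0\})$. The approximate problem can be recast as $\mathcal{A}_{\lambda,T}v_{\lambda,T}=0$ on the weighted Hilbert space $L^{2}((0,T),z^{(1-2s)/s}\dz;H)$, with a maximal monotone $\mathcal{A}_{\lambda,T}$ whose domain encodes the boundary conditions; existence and uniqueness of $v_{\lambda,T}$ follow from the Minty--Browder theorem. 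The a priori bounds~\eqref{eq:87A}, \eqref{eq:81}--\eqref{eq:82}, and (under $\varphi\in D(A)$) the sharper estimates~\eqref{eq:30bis2A}--\eqref{eq:31bis2A} would then be established uniformly in $\lambda$ and $T$ by testing the equation against well-chosen multipliers: $u_{\lambda,T}-y$ and $u_{\lambda,T}'$ for monotonicity and~\eqref{eq:81}; $t^{1-2s}u_{\lambda,T}'$ for the weighted second-derivative bound~\eqref{eq:82}; and $A_{\lambda}u_{\lambda,T}$, exploiting the convergence $A_{\lambda}\varphi\to A^{0}\varphi$ in $H$, for~\eqref{eq:30bis2A}--\eqref{eq:31bis2A}. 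Convexity of $t\mapsto\norm{u(t)-y}_{H}^{2}$ is a direct computation from~\eqref{eq:79} and monotonicity. Passing to the limit $T\to+\infty$ and $\lambda\to 0+$ via weak compactness in the $W^{1,2}_{s}$-framework and Minty's demi-closedness trick then yields a strong solution on $(0,+\infty)$, and the boundary behaviour at $t=0$ is recovered from Proposition~\ref{propo:1} together with either the weak continuity hypothesis on $\partial_{H} j$ or, in the indicator case, the forced identity $u(0)=\varphi$.

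For the contraction~\eqref{eq:83}, let $\phi(t):=\norm{u(t)-\hat{u}(t)}_{H}^{2}$ for two strong $L^{\infty}$ solutions of~\eqref{eq:79}. A direct computation using the equation and the monotonicity of $A$ gives $(t^{1-2s}\phi')'(t)\ge 2t^{1-2s}\norm{u'(t)-\hat{u}'(t)}_{H}^{2}\ge 0$, so $t^{1-2s}\phi'(t)$ is nondecreasing on $(0,+\infty)$; since $\phi$ is bounded, its limit at infinity cannot be positive, which forces $\phi'\le 0$ throughout and hence~\eqref{eq:83}. The main obstacle I anticipate is the uniform control, in $\lambda$ and $T$, of the boundary flux $t^{1-2s}u_{\lambda,T}'(t)$ as $t\to 0+$: the weight is degenerate for $s>\tfrac{1}{2}$ and singular for $0<s<\tfrac{1}{2}$, and the dichotomy already visible in~\eqref{eq:82} propagates through~\eqref{eq:30bis2A}--\eqref{eq:31bis2A}. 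Choosing test functions so that the resulting boundary contributions land in the correct weighted trace space (supplied by Proposition~\ref{propo:1} in the mixed-weight setting) on both sides of $s=\tfrac{1}{2}$, and then identifying $\lim_{t\to 0+}t^{1-2s}u'(t)$ in $\partial_{H} j(u(0)-\varphi)$ after the passage to the limit, will be the most delicate technical point.
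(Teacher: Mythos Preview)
Your overall strategy is sound and closely parallels the paper's: both rely on the change of variable~\eqref{eq:15} to pass from~\eqref{eq:79} to~\eqref{eq:80}, then approximate, derive uniform a~priori estimates in the weighted spaces of Section~\ref{sec:pre}, and pass to the limit via demi-closedness. The one genuine methodological difference is your choice of second approximation. You propose truncating to a finite interval $(0,T)$ with the endpoint condition $v_{\lambda,T}(T)=y$ and then sending $T\to+\infty$; this is essentially Barbu's approach~\cite{MR0331133} for $s=1/2$. The paper instead follows Brezis~\cite{MR0317123}: it works directly on $(0,+\infty)$ in $L^{2}_{\underline{s}}(H)$ and adds a damping term $\delta v$ to the equation, solving $\A_{\lambda}v_{\lambda}+\delta v_{\lambda}+\partial_{L^{2}_{\underline{s}}}\E(v_{\lambda})=0$ via the subdifferential characterisation of Proposition~\ref{prop:1}, then sends $\lambda\to0+$ (Cauchy in $L^{2}_{\underline{s}}(H)$) and finally $\delta\to0+$ (Cauchy for $v'_{\delta}$ in $L^{2}(H)$). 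The $\delta$-regularisation buys coercivity on the unbounded interval without an artificial right boundary condition, and the energy framework makes the integration-by-parts rule of Lemma~\ref{lem:ibp} immediately available for the trace estimates you flag as delicate; your truncation scheme trades this for a more elementary finite-interval Minty--Browder argument but then requires a separate limit $T\to+\infty$ and care that no spurious boundary contribution survives at $z=T$. Your treatment of~\eqref{eq:83} via the monotonicity of $t^{1-2s}\phi'$ is correct and is precisely the $t$-variable analogue of the paper's convexity argument for $z\mapsto\norm{v_{1}(z)-v_{2}(z)}_{H}^{2}$.
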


\begin{remark}[{\em The case $A=\partial_{H}\phi$}]\label{rem:1}
  Suppose $A=\partial_{H}\phi$ the subdifferential operator of a
  proper, convex, lower semicontinuous functional
  $\phi : H\to \R\cup\{+\infty\}$ attaining a global minimum
  $\min_{v\in H} \phi(v)=\phi(v_{0})$ at some $v_{0}\in H$. After possibly
  replacing $\phi$ by $\tilde{\phi}(v-v_{0})-\phi(v_{0})$, ($v\in H$),
  we may assume without loss of generality that $\phi$ attains it
  minimum at $0\in H$ and $\phi : H\to \overline{\R}_{+}$. Hence, for
  $\varphi\in D(A)$, the
  second-order boundary problem~\eqref{eq:1} can be rewritten as
  \begin{equation}
    \label{eq:84}
    0\in \partial_{L^{2}_{1-s}(H)}\E(u),
  \end{equation}
  or equivalently, as the minimization problem
  \begin{displaymath}
    \min_{u\in L^{2}_{1-s}(H)} \E(u)
  \end{displaymath}
  for the functional $\E :  L^{2}_{1-s}(H)\to \R\cup\{+\infty\}$ defined by
  \begin{displaymath}
    \E(u):=
    \begin{cases}
      \displaystyle\int_{0}^{+\infty}t^{2(1-s)}
      \Big\{\tfrac{1}{2}\norm{u'(t)}_{H}^{2}+\phi(u(t))\Big\}\,\frac{\dt}{t}
      + j(u(0)-\varphi)
      & \text{if $u\in D(\E)$,}\\[7pt]
      +\infty & \text{if otherwise,}
    \end{cases}
  \end{displaymath}
  where
  $D(\E):=\{u\in W^{1,2}_{1-s}(H)\,\vert\; \phi(u)\in
  L^{1}_{1-s}(H),\;u(0)-\varphi\in D(j)\}$. It is not difficult to
  see that $\E$ is convex and by Proposition~\ref{propo:1}, $\E$ is
  proper and lower semicontinuous on $L^{2}_{1-s}(H)$
  (cf~\cite[Exemple~2.8.3]{MR0348562}). But to see that $\E$ is
  coercive, stronger assumptions on $\phi$ are needed; for example,
  suppose there is an $\eta>0$ such that
  \begin{displaymath}
    \phi(u)\ge \eta\, \norm{u}_{H}^{2},\qquad\text{for all $u\in D(\phi)$.}
  \end{displaymath}

  This is consistent with the linear
  theory of sectorial operators (cf~\cite[p 10
  formula~(4.4)]{MR3772192}) and the case $s=1/2$ in the
  nonlinear theory (cf~\cite[Chapter V.2]{MR0390843}). % By using
  % Proposition~\ref{propo:1}, it is not difficult to see that $\E$ is
  % convex, proper and lower semicontinuous on
  % $L^{2}_{1-s}(H)$. To show that~\eqref{eq:84} is
  % equivalent to problem~\eqref{eq:1}, one proceeds as in the proof of
  % Proposition~\ref{prop:1}. However, to see that there is a $u\in
  % L^{2}_{1-s}(H)$ such that~\eqref{eq:84} holds, more
  % restrictive conditions on $j$ and $D(\E)$ needs to be made.
\end{remark}

% We note that the equation in~\eqref{eq:1} is \emph{degenerate} if
% $s>1/2$ and \emph{singular} if $s<1/2$. In
% Theorem~\ref{thm:1bis}, the definition of \emph{strong solutions}
% of~\eqref{eq:1} reads as follows.

Under the assumption that the statement of Theorem~\ref{thm:1bis}
holds, we outline how to deduce the statements of Theorem~\ref{thm:1}
and Theorem~\ref{thm:2Robin}.

\begin{proof}[Proof of Theorem~\ref{thm:1} and
 Theorem~\ref{thm:2Robin}]
 By choosing $j$ to be the indicator function, one sees that
 problem~\eqref{eq:1} reduces to Dirichlet problem~\eqref{eq:1bis}.
 Therefore for $\varphi\in D(A)$, the statements of
 Theorem~\ref{thm:1} follows from Theorem~\ref{thm:1bis}. Now, let
 $\varphi\in \overline{D(A)}^{\mbox{}_{H}}$. Then there are sequences
 $(\varphi_{n})_{n\ge 1}$ of $\varphi_{n}\in D(A)$ and
 $(v_{n})_{n\ge 1}$ of corresponding solutions $u_{n}$
 of~\eqref{eq:1bis}. By~\eqref{eq:83}, %\eqref{eq:88},
\begin{displaymath}
  \sup_{t\ge 0}\norm{u_{n}(t)-u_{m}(t)}_{H}\le \norm{\varphi_{n}-\varphi_{m}}_{H}
\end{displaymath}
for every $n$, $m\ge 0$. Moreover, by~\eqref{eq:87A}, every $u_{n}\in
L^{\infty}(H)$. Therefore, $(u_{n})_{n\ge 1}$ is a Cauchy sequence in
$C^{b}([0,+\infty);H)$ and so, there is a function $u\in
C^{b}([0,+\infty);H)$ such that
\begin{displaymath}
  \lim_{n\to +\infty}u_{n}=u\qquad\text{in $C^{b}([0,+\infty);H)$.}
\end{displaymath}
From this limit, we can conclude that $u$ satisfies~\eqref{eq:87A}
and~\eqref{eq:83}. In particular, since $u_{n}(0)=\varphi_{n}\to
\varphi$ in $H$, one has that $u(0)=\varphi$ in $H$. Further,
by~\eqref{eq:81} %\eqref{eq:65} 
and~\eqref{eq:82} %\eqref{eq:77}
applied to $u_{n}$, we can conclude that
$u\in W^{2,2}_{loc}((0,+\infty);H)$ and after possibly passing to a
subsequence of $(u_{n})_{n\ge 1}$ and taking
$\liminf_{n\to +\infty}$ in those inequalities, we obtain that $u$ satisfies~\eqref{eq:81} %\eqref{eq:65}
and~\eqref{eq:82}. %\eqref{eq:77}. 
%Analogously, one shows that $u$ satisfies~\eqref{eq:89}. 
To see that $u$ is a strong solution of~\eqref{eq:79}, %\eqref{eq:80},
one proceeds similarly to \emph{step~5} in the proof of
Theorem~\ref{thm:2} (below). This proves the statement of
Theorem~\ref{thm:1}.

  % The first part of statement of
  % Theorem~\ref{thm:1} follows from Theorem~\ref{thm:2} and
  % Lemma~\ref{lem:2} below.

 Next, to see that also Theorem~\ref{thm:2Robin} holds, for given
 $\lambda>0$ and $\varphi\in D(A)$, one chooses
 \begin{displaymath}
   j(\cdot)=\frac{\lambda}{2}\lnorm{\cdot-\left(\frac{1}{\lambda}-1\right)\varphi}_{H}^{2}
 \end{displaymath}
 and applies Theorem~\ref{thm:1bis}. % implies the existence of a unique solutions
  % to the Robin problem~\eqref{eq:1Robin}. In other words,
  % Theorem~\ref{thm:2Robin} holds.
\end{proof}

The key to prove Theorem~\ref{thm:1bis} (respectively,
Theorem~\ref{thm:1}) is via the change of variable~\eqref{eq:15},
which transforms the abstract boundary-value problem~\eqref{eq:1}
associated with the Bessel-type operator $A_{1-2s}$ to the following
abstract boun\-dary-value problem
  \begin{equation}
    \label{eq:2}
    \begin{cases}
      z^{-\frac{1-2s}{s}} v''(z) \in Av(z) & \text{for
        almost every  $z>0$,}\\
      \mbox{}\hspace{1,2cm} v'(0) \in \partial \tilde{j}(v(0) -
      \varphi), &
    \end{cases}
  \end{equation}
  for given $\varphi\in \overline{D(A)}^{\mbox{}_{H}}$ and where
  $\tilde{j}=(2s)^{-(1-2s)} j$.

 \begin{definition}\label{def:Avz}
   We call a function $v : [0,+\infty)\to H$ a \emph{strong
      solution} of equation~\eqref{eq:80} if
    $v\in W^{2,2}_{loc}((0,+\infty);H)$ and for almost every $z>0$,
    $v(z)\in D(A)$ and
    $z^{-\frac{1-2s}{s}}v''(z)\in A(v(z))$. For
    given $\varphi\in H$ and $\tilde{j}= (2s)^{-(1-2s)} j$, a function
    $v$ is called a \emph{solution} of
    problem~\eqref{eq:2} if $v\in C^{1}([0,+\infty);H)$ is a strong
    solution of~\eqref{eq:80} satisfying
    \begin{math}
     v'(0) \in \partial \tilde{j}(v(0)-\varphi).
    \end{math}
    %\txtr{Check, if the definition of the Dirichlet problem is needed!}
    
    Further, for given $\varphi\in H$,  a function
    $v\in C([0,+\infty);H)$ is called a \emph{solution of Dirichlet 
    problem}
    \begin{equation}
      \label{eq:2bis}
       \begin{cases}
      z^{-\frac{1-2s}{s}} v''(z) \in Av(z) & \text{for
        almost every  $z>0$,}\\
      \mbox{}\hspace{1,3cm} v(0) =\varphi, &
    \end{cases}
    \end{equation}
    if $v\in C([0,+\infty);H)$, $v$ is a strong solution
    of~\eqref{eq:80}, and $v(0)=\varphi$.
 \end{definition}

Our first lemma outlines the equivalence between
Definition~\eqref{def:thm1bis} and Definition~\eqref{def:Avz}.

 \begin{lemma}\label{lem:2}
   Let $A$ be an operator on $H$, $j : H \mapsto \R\cup\{+\infty\}$ a
   proper functional, $\varphi\in H$, and $0<s<1$. For
   $u\in W^{2,2}_{loc}((0,+\infty);H)$, let $u(t)=v(z)$ for $z$ given
   by the change of variable~\eqref{eq:15}. Then, the following
   statements hold.
   \begin{enumerate}
    \item \label{lem:2claim1}  $u$ is a solution of \eqref{eq:1}
     if and only if $v$ is a solution of~\eqref{eq:2}.
 
   \item \label{lem:2claim2} A function $u\in W^{1,2}_{s}(H)$ if and only if $v\in
     W^{1,2}_{\frac{1}{2},\frac{3s-1}{2s}}(H)$,
     and a function $u\in
     W^{1,2}_{1-s}(H)$ if and only if $v\in
     W^{1,2}_{\frac{1-s}{2s},\frac{1}{2}}(H)$ with
     \begin{equation}
       \label{eq:36}
       u'(t)=v'(z) z^{-\frac{1-2s}{2s}}.
     \end{equation}
  \end{enumerate}
\end{lemma}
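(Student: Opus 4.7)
The plan is to carry out the change of variable $z=(t/(2s))^{2s}$, with inverse $t=2s\,z^{1/(2s)}$, in a purely computational manner and verify each assertion by direct chain-rule manipulations. The basic identities I will repeatedly use are
\[
\frac{\mathrm{d}z}{\mathrm{d}t} \;=\; z^{-\frac{1-2s}{2s}},\qquad
\frac{\mathrm{d}t}{t} \;=\; \frac{1}{2s}\,\frac{\mathrm{d}z}{z},\qquad
t^{\alpha} \;=\; (2s)^{\alpha}\, z^{\alpha/(2s)}\ \text{for every } \alpha\in\R.
\]
Applying the chain rule to $u(t)=v(z)$ produces \eqref{eq:36}, namely $u'(t)=v'(z)\,z^{-(1-2s)/(2s)}$. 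Multiplying by $t^{1-2s}$ yields the ``taming'' identity $t^{1-2s}u'(t)=(2s)^{1-2s}\,v'(z)$, and a further differentiation in $t$ combined with $\{t^{1-2s}u'(t)\}' = t^{1-2s}\bigl(u''(t)+\tfrac{1-2s}{t}u'(t)\bigr)$ gives the key identity
\[
u''(t)+\frac{1-2s}{t}u'(t) \;=\; z^{-(1-2s)/s}\,v''(z).
\]

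For claim~\eqref{lem:2claim1}, I would first note that since $t\mapsto z$ is a smooth diffeomorphism of $(0,+\infty)$ onto itself, it preserves $W^{2,2}_{\loc}$ regularity, so $u\in W^{2,2}_{\loc}((0,+\infty);H)$ if and only if $v\in W^{2,2}_{\loc}((0,+\infty);H)$. The last displayed identity then shows that $u''(t)+\frac{1-2s}{t}u'(t)\in Au(t)$ almost everywhere if and only if $z^{-(1-2s)/s}\,v''(z)\in Av(z)$ almost everywhere. For the boundary conditions, the taming identity shows that $\lim_{t\to0+}t^{1-2s}u'(t)$ exists in $H$ if and only if $\lim_{z\to0+}v'(z)$ exists, in which case $v$ extends to a $C^{1}$-function on $[0,+\infty)$ with $v(0)=u(0)$. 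Using $\partial[\alpha j]=\alpha\,\partial j$ for $\alpha>0$, the condition $\lim_{t\to0+}t^{1-2s}u'(t)\in\partial j(u(0)-\varphi)$ becomes exactly $v'(0)\in\partial\tilde{j}(v(0)-\varphi)$ with $\tilde{j}=(2s)^{-(1-2s)}j$, as required.

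For claim~\eqref{lem:2claim2}, direct substitution of $t^{\alpha}=(2s)^{\alpha}z^{\alpha/(2s)}$ together with the formula for $u'$ gives
\[
t^{s}u(t) \;=\; (2s)^{s}\,z^{1/2}\,v(z),\qquad t^{s}u'(t) \;=\; (2s)^{s}\,z^{(3s-1)/(2s)}\,v'(z),
\]
\[
t^{1-s}u(t) \;=\; (2s)^{1-s}\,z^{(1-s)/(2s)}\,v(z),\qquad t^{1-s}u'(t) \;=\; (2s)^{1-s}\,z^{1/2}\,v'(z).
\]
Combined with $\mathrm{d}t/t=(2s)^{-1}\,\mathrm{d}z/z$, each defining integral for $W^{1,2}_{s}(H)$, respectively $W^{1,2}_{1-s}(H)$, transforms into a positive constant multiple of the corresponding integral defining membership in $W^{1,2}_{1/2,(3s-1)/(2s)}(H)$, respectively $W^{1,2}_{(1-s)/(2s),1/2}(H)$, giving both equivalences at once.

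The whole argument is essentially bookkeeping with powers of $z$; the only subtlety I expect is correctly matching the boundary trace through the absorption of the constant $(2s)^{1-2s}$ into $\tilde{j}$ via positive homogeneity of the subdifferential, and making sure that in claim~\eqref{lem:2claim1} the regularity at $t=0+$ (which is where the original equation is singular) is genuinely captured by the $C^{1}$ extension of $v$ at $z=0+$ supplied by the existence of the limit $\lim_{z\to 0+}v'(z)$.
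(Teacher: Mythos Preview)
Your proposal is correct and follows essentially the same approach as the paper: both proofs carry out the change of variable $z=(t/(2s))^{2s}$ by direct chain-rule computation, derive the identity $t^{1-2s}u'(t)=(2s)^{1-2s}v'(z)$ and the second-order relation $u''+\tfrac{1-2s}{t}u'=z^{-(1-2s)/s}v''$, and then read off the equivalence of the differential inclusions, the boundary conditions (via positive homogeneity of the subdifferential), and the weighted Sobolev memberships. Your presentation is somewhat more streamlined than the paper's, but there is no genuine methodological difference.
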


\begin{proof}
  If $z =z(t)= (\frac{t}{2s})^{2s}$, then $t=t(z)= 2s z^{\frac{1}{2s}}$ and so, $v(z)
  = u(t) = u(2s z^{\frac{1}{2s}})$. Then, for $z$ and $t>0$,
  \begin{displaymath}
    v'(z) = \frac{\td}{\dz} v(z) = \frac{\td u}{\dt}
                                               \cdot \frac{\dt}{\dz} 
                                             %= u'(t) \frac{\td}{\dz} \big( (1-\alpha)z^{\frac{1}{1-\alpha}} \big)
                                             = u'(t) z^{\frac{1-2s}{2s}},
  \end{displaymath}
  proving~\eqref{eq:36}. By using~\eqref{eq:36}, one
  easily verifies that $u\in W^{1,2}_{1-s}(H)$ if and only if $v\in
  W^{1,2}_{\frac{1-s}{2s},\frac{1}{2}}(H)$, and $u\in W^{1,2}_{s}(H)$ if and only if $v\in
     W^{1,2}_{\frac{1}{2},\frac{3s-1}{2s}}(H)$. Thus
     claim~\eqref{lem:2claim2} holds. Further,
  \begin{displaymath}
    v''(z) = \frac{\td}{\dz} v'(z) 
           % &= \frac{\partial}{\partial z} \big( u'(t) z^{\frac{a}{1-a}} \big) \\
           % &= \frac{\partial}{\partial z} u'(t) \cdot z^{\frac{a}{1-a}} +
           %   u'(t) \cdot \frac{\partial}{\partial z} \big( z^{\frac{a}{1-a}} \big)\\
           % &= \frac{\partial u'}{\partial t} \frac{\partial t}{\partial z} \cdot z^{\frac{a}{1-a}} +
           %   u'(t) \cdot \frac{a}{1-a} z^{\frac{2a-1}{1-a}} \\
           = u''(t)  z^{\frac{1-2s}{s}} + u'(t) \,\frac{1-2s}{2s} z^{\frac{1-2s}{2s}-1}
  \end{displaymath}
  for $z$, $t>0$. Multiplying this equation by
  $z^{-\frac{1-2s}{s}}$, % and using that
                                % $\alpha=1-2s$, 
  one sees that
  \begin{displaymath}
    z^{-\frac{1-2s}{s}} v''(z) = u''(t) + \frac{1-2s}{2s} z^{-\frac{1}{2s}} u'(t) 
                               = u''(t) + \frac{1-2 s}{t}u'(t) %\in Au(t)=Av(z)
  \end{displaymath}
  for almost every $z$, $t>0$. Therefore, $ z^{-\frac{1-2s}{s}} v''(z) \in
  Av(z)$ for a.e. $z>0$ if and only if $u''(t) + \frac{1-2
    s}{t}u'(t)\in Au(t)$ for a.e. $t>0$, showing that $u$ is a
  strong solution of~\eqref{eq:79} if and only if $v$ is a strong
  solution of~\eqref{eq:80}. Moreover,
  $t^{1-2s}u'\in W^{1,2}_{s}(H)$ if and only if
  $z^{\tfrac{1}{2}}v'$ and $z^{-\frac{1-3s}{2s}}v''\in
  L^{2}_{\star}(H)$. 
 %Thus, we can conclude that $z^{\frac{-2a}{1-a}} v''(z) \in Av(z)$.
  Multiplying $v'(z)$ by $(2s)^{1-2s}$ shows that
  \begin{displaymath}
    (2s)^{1-2s} v'(z) = (2s)^{1-2s} u'(t) z^{\frac{1-2s}{2s}} % \\
                  % &= (1-a)^a u'(t) (\frac{t}{1-a})^a \\
                  % &= t^a u'(t) \\
                  = t^{1-2s} u'(t).
  \end{displaymath}
  Note that $\lim_{z\to 0+}t(z)=0$ and $\lim_{t\to 0+}z(t)=0$. Thus
  \begin{displaymath}
    \lim_{t \to 0+}t^{1-2s}u'(t)\text{ exists in $H$ if and only if }\lim_{z\to 0+}
  v'(z)\text{ exists in $H$.}
  \end{displaymath}
 By Proposition~\ref{propo:1}, $t^{1-2s}u'\in
 W^{1,2}_{s}(H)$ implies that $\lim_{t \to
    0+}t^{1-2s}u'(t)$ exists in $H$. 
%   \begin{displaymath}
%     \lim_{t \to 0+}t^{1-2s}u'(t)\qquad\text{exists in $H$.}
% \end{displaymath}
Similarly, one has that $u(0):=\lim_{t\to 0+}u(t)$ exists in $H$ if and
    only if $v(0)=\lim_{z\to 0+}v(z)$ exists in $H$,
% \begin{displaymath}
%   u(0):=\lim_{t\to 0+}u(t)\text{ exists in $H$}\text{ if and
%     only if } v(0)=\lim_{z\to 0+}v(z)\text{ exists in $H$,}
% \end{displaymath}
and by Proposition~\ref{propo:1}, $u(0):=\lim_{t\to 0+}u(t)=\varphi$
exists in $H$. Therefore and by the definition of the subdifferential
$\partial_{H} j$, one has that
$\lim_{t \to 0_+} t^{1-2s}u'(t) \in \partial_{H} j(u(0) -
\varphi)$ if and only if
$(1-\alpha)^{\alpha} v'(0) \in \partial_{H} j(v(0) - \varphi)$, which
completes the proof of showing that $u$ is a solution of~\eqref{eq:1}
if and only if $v$ is a solution of~\eqref{eq:2}.
\end{proof}

Our next theorem provides the existence and uniqueness of
problem~\eqref{eq:2}.

% The equivalence between Theorem~\ref{thm:1bis} and Theorem~\ref{thm:2}
% is outlined by the following lemma.

 \begin{theorem}
   \label{thm:2}
   Under the hypothesis of Theorem~\ref{thm:1bis}, let
   $\tilde{j}:= (2s)^{-(1-2s)} j$. Then, for every
   $\varphi\in D(A)$ and $y\in A^{-1}\{0\}$, there is a unique
   solution
   \begin{displaymath}
     v\in L^{\infty}(H)\cap C^{1}([0,+\infty);H)\quad\text{ with
     }\quad v'\in W^{1,2}_{\frac{1}{2},\frac{3s-1}{2s}}(H)
   \end{displaymath}
   of problem~\eqref{eq:2} satisfying
   \allowdisplaybreaks
   \begin{align}
     \label{eq:87}
     \norm{v(z)-y}_{H} & \le \norm{v(\hat{z})-y}_{H}\qquad\text{for
       all $z\ge \hat{z}\ge 0$,}\\
     \label{eq:66}
     \norm{v'(z)}_{H}&\le \norm{v'(\hat{z})}_{H}\qquad\text{for
       all $z\ge \hat{z}\ge 0$,}\\ \label{eq:65}
     \norm{z\,v'(z)}_{L^{2}_{\star}(H)} &\le
       \frac{\norm{\varphi-y}_{H}}{\sqrt{2}}\\ \label{eq:89}
     \norm{v'(z)}_{H}&\le \frac{\norm{\varphi-y}_{H}}{z}\qquad\text{for
       every $z>0$,}\\
\label{eq:77}
     \norm{z^{2}v''}_{L^{2}_{\star}(H)} &\le
     \begin{cases}
     \displaystyle\frac{\norm{\varphi-y}_{H}}{\sqrt{2}} & \text{if $s\ge \frac{1}{2}$,}\\[7pt]
     \displaystyle\frac{1}{2}\left(\frac{s}{1-2s}\frac{1}{2}+3\right)^{\frac{1}{2}}\,
     \norm{\varphi-y}_{H},
     & \text{if $0<s <\frac{1}{2}$.}
     \end{cases}\\
%   \end{align}
 %     \begin{align}
      \label{eq:30bis2}
     \norm{v'(0)}_{H}^{\frac{1}{2}}&\le \norm{A^{0}\varphi}_{H}^{\frac{1}{2}}+\norm{\varphi-y}_H^{\frac{1}{2}},\\ 
     \label{eq:14bis2}
     \norm{z^{\frac{1}{2}}v'}_{L^{2}_{\star}(H)} &\le \norm{A^{0}\varphi}_{H}^{\frac{1}{2}} +\norm{\varphi-y}_{H},\\
     \label{eq:31bis2}
     \norm{v''}_{L^{2}_{\overline{s}}}
   &\le \norm{A^{0}\varphi}_{H} +\norm{\varphi-y}_H^{\frac{1}{2}}\,\norm{A^{0}\varphi}_{H}^{\frac{1}{2}}.
     % \\ \label{eq:67}
     % \int_{0}^{+\infty}z^{3}\,\norm{v''(z)}_{H}^{2}&\le
     %   \frac{C^2}{2},
   \end{align}
   For every two strong solutions $v$ and $\hat{v}\in L^{\infty}(H)$
   of~\eqref{eq:80}, one has that
   \begin{equation}
     \label{eq:88}
     \norm{v(z)-\hat{v}(z)}_{H}\le
     \norm{v(\hat{z})-\hat{v}(\hat{z})}_{H}\qquad
     \text{for every $z\ge \hat{z}\ge 0$.}
   \end{equation}

   Further, for every $\varphi\in \overline{D(A)}^{\mbox{}_{H}}$ and
   $y\in A^{-1}\{0\}$, there is a unique solution $v \in
   L^{\infty}(H)$ of Dirichlet problem~\eqref{eq:2bis}
   satisfying~\eqref{eq:87}-\eqref{eq:77}, and~\eqref{eq:88}.
   % and the map $z\mapsto \frac{1}{2}\norm{v(z)}_{H}^{2}$ is convex,
   % bounded and decreasing. 
   % Moreover, if $\varphi\in D(A)$ and $y\in
   % A^{-1}(\{0\})$, then $v\in
   % %W^{1,2}_{\frac{1-s}{2s},\frac{1}{2}}(H)\cap
   % C^{1}([0,+\infty);H)$ with $v'\in
   % W^{1,2}_{\frac{1}{2},\frac{3s-1}{2s}}(H)$ satisfying
   % where $C>0$ is a constant which can be chosen to be
   % $\norm{\varphi}_{H}$ if $j$ is the indicator function. 
   % In the case, $j$ is the indicator function,
 %   then,
 %   \begin{equation}
 %     \label{eq:77}
 %     \norm{z^{2}v''}_{L^{2}_{\star}(H)}\le
 %     \begin{cases}
 %     \displaystyle\frac{\norm{\varphi-y}_{H}}{\sqrt{2}} & \text{if $s\ge \frac{1}{2}$,}\\[7pt]
 %     \displaystyle\frac{1}{2}\left(\frac{s}{1-2s}\frac{1}{2}+3\right)^{\frac{1}{2}}\,
 %     \norm{\varphi-y}_{H},
 % & \text{if $0<s <\frac{1}{2}$,}
 %  \end{cases}
 %   \end{equation}  
\end{theorem}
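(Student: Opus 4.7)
My plan is to exploit the fact that the change of variable has eliminated the first-order term and to regularise $A$ by its Yosida approximation $A_\lambda=(I-J_\lambda^A)/\lambda$, which is $1/\lambda$-Lipschitz, monotone, and satisfies $A_\lambda y=0$ since $y\in A^{-1}\{0\}$ implies $J_\lambda^A y=y$. The regularised equation $z^{-(1-2s)/s}v_\lambda''=A_\lambda v_\lambda$ is then a semilinear ODE with Lipschitz right-hand side; I would first solve it on a bounded interval $[0,T]$ with the given subdifferential condition $v_\lambda^T{}'(0)\in\partial\tilde{j}(v_\lambda^T(0)-\varphi)$ at $0$ and a homogeneous Neumann condition $v_\lambda^T{}'(T)=0$ at $T$. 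Existence on $[0,T]$ can be obtained by a Browder surjectivity / monotone operator argument: the operator $v\mapsto -(v')'+z^{(1-2s)/s}A_\lambda(v)$ together with the boundary term $\partial\tilde{j}(v(0)-\varphi)$ at $z=0$ is maximal monotone and coercive on the weighted Sobolev space $W^{1,2}_{\frac{1}{2},\frac{3s-1}{2s}}((0,T);H)$, where the $\partial j$-monotone hypothesis on $A$ is what makes the boundary term compatible with a single energy structure.

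\textbf{A priori estimates.} All inequalities \eqref{eq:87}--\eqref{eq:31bis2} should first be proved for $v_\lambda^T$ with constants uniform in $\lambda$ and $T$, then inherited by the limit. Pairing the equation with $v_\lambda^T-y$ and using $A_\lambda y=0$ together with monotonicity of $A_\lambda$ gives $(v_\lambda^T{}'',v_\lambda^T-y)_H\ge 0$, so $z\mapsto\|v_\lambda^T(z)-y\|_H^2$ is convex on $[0,T]$; the Neumann condition at $T$ kills its derivative there, forcing it to be non-increasing, which is \eqref{eq:87}. Pairing with $v_\lambda^T{}'$ produces $\tfrac{1}{2}(\|v_\lambda^T{}'\|_H^2)'=z^{(1-2s)/s}(A_\lambda v_\lambda^T,v_\lambda^T{}')_H$, and weighted integration against suitable powers of $z$ together with an integration by parts supplies \eqref{eq:65}, \eqref{eq:89} and \eqref{eq:77} (the case split at $s=1/2$ in \eqref{eq:77} reflecting precisely the change in integrability of $z^{(1-2s)/s}$ near $0$ or $\infty$). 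Differentiating the equation once in $z$ and exploiting monotonicity of the (formal) Jacobian of $A_\lambda$ yields \eqref{eq:66}. The sharpened bounds \eqref{eq:30bis2}--\eqref{eq:31bis2} for $\varphi\in D(A)$ are obtained by additionally pairing the equation with $A_\lambda v_\lambda^T$, invoking $\|A_\lambda\varphi\|_H\le\|A^0\varphi\|_H$ at $z=0$, and applying the weighted trace estimate of Proposition~\ref{propo:1}.

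\textbf{Passage to the limit.} The uniform estimates allow me to extract subsequential limits as $T\to\infty$ and then as $\lambda\to 0^+$: weak compactness in $W^{1,2}_{\frac{1}{2},\frac{3s-1}{2s}}(H)$, together with the pointwise bound $\|v_\lambda^T{}'(z)\|_H\le\|\varphi-y\|_H/z$ which provides equicontinuity on compact subsets of $(0,\infty)$, yields a strong limit $v$ in $C_{loc}((0,\infty);H)$. Since $\|J_\lambda^A v_\lambda-v_\lambda\|_H=\lambda\|A_\lambda v_\lambda\|_H\to 0$ in the relevant weighted norm and $A_\lambda v_\lambda=A(J_\lambda^A v_\lambda)$, the demiclosedness of the maximal monotone graph of $A$ in $H\times H_w$ transfers the inclusion to the limit, so $v$ is a strong solution of~\eqref{eq:80}. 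Uniqueness and the contraction \eqref{eq:88} follow from monotonicity applied to two bounded strong solutions $v,\hat{v}$: $(v''-\hat{v}'',v-\hat{v})_H=z^{(1-2s)/s}(Av-A\hat{v},v-\hat{v})_H\ge 0$, so $z\mapsto\|v(z)-\hat{v}(z)\|_H^2$ is convex and bounded on $[0,\infty)$, hence non-increasing.

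\textbf{Main obstacle and the Dirichlet extension.} The step I expect to be the most delicate is passing to the limit in the boundary condition $v'(0)\in\partial\tilde{j}(v(0)-\varphi)$: one must control the trace $v_\lambda^T{}'(0)$ well enough to pass through the possibly multivalued $\partial\tilde{j}$. The weak continuity hypothesis on $\partial_H j$ (or the trivial indicator case in which the condition reduces to $v(0)=\varphi$) is exactly what is used here, combined with $H$-convergence of $v_\lambda^T(0)\to v(0)$ supplied by Proposition~\ref{propo:1} applied in the weighted space; a subtle point is that the estimate \eqref{eq:30bis2} is precisely what bounds $v_\lambda^T{}'(0)$ uniformly and makes this limit legitimate. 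The second delicate point is the dichotomy at the critical exponent $s=1/2$ in \eqref{eq:77} and \eqref{eq:31bis2}, which forces a careful treatment of cut-off functions near $0$ and near $\infty$. Finally, the Dirichlet problem \eqref{eq:2bis} for $\varphi\in\overline{D(A)}^{\mbox{}_{H}}$ follows by density: choose $\varphi_n\in D(A)$ with $\varphi_n\to\varphi$ in $H$, apply the result just proved with $\tilde{j}$ the indicator, and use \eqref{eq:88} to conclude that the corresponding sequence $(v_n)$ is Cauchy in $C^b([0,\infty);H)$; the limit inherits \eqref{eq:87}--\eqref{eq:77} and \eqref{eq:88} and is a strong solution of \eqref{eq:80} by the same monotone closure argument already used in the proof of Theorem~\ref{thm:1}.
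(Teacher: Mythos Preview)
Your overall strategy---Yosida regularisation of $A$ combined with truncation to $[0,T]$ with a Neumann condition at $T$---is a legitimate alternative to the paper's route, which works directly on the half-line and uses a \emph{double} regularisation: the Yosida parameter $\lambda$ \emph{and} a damping term $+\delta v$. Concretely, the paper encodes both the second-order equation and the boundary condition $v'(0)\in\partial\tilde{j}(v(0)-\varphi)$ as the subdifferential in $L^2_{\underline{s}}(H)$ of a single convex functional $\mathcal{E}$ (Proposition~\ref{prop:1}), and then solves $\mathcal{A}_\lambda v_\lambda+\delta v_\lambda+\partial_{L^2_{\underline{s}}}\mathcal{E}(v_\lambda)=0$ by the standard sum theorem for a Lipschitz plus a maximal monotone operator. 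The $\delta$-term supplies the coercivity that your truncation to $[0,T]$ provides; both devices make the approximating problem well-posed. What the paper's route buys is that existence of $v_\lambda$ becomes a one-line appeal to an abstract result, and the subdifferential calculus of Proposition~\ref{prop:1} automatically delivers the boundary condition at $z=0$---something your sketch leaves vague (``Browder surjectivity'').

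There is, however, a genuine gap in your passage to the limit. You claim that equicontinuity (from $\|v_\lambda'(z)\|_H\le C/z$) together with weak compactness yields a \emph{strong} limit in $C_{loc}((0,\infty);H)$. In an infinite-dimensional $H$, Arzel\`a--Ascoli requires pointwise relative compactness, which boundedness in $H$ does not give. The paper circumvents this entirely by proving that $(v_\lambda)_{\lambda>0}$ is a \emph{Cauchy sequence} in $L^2_{\underline{s}}(H)$ (Lemma~\ref{lem:3}) and that $(v'_\delta)_{\delta\in(0,1]}$ is Cauchy in $L^2(H)$ (Lemma~\ref{lem:4}); these estimates are obtained by subtracting the two approximating equations, pairing with the difference, and exploiting monotonicity together with the uniform bound on $\|\mathcal{A}_\lambda v_\lambda\|_{L^2_{\underline{s}}}$. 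Without strong convergence of $v_\lambda$ (or of $J_\lambda^A v_\lambda$), your demiclosedness step does not apply as stated; you would instead need the full Minty trick (verify $\limsup(A_\lambda v_\lambda,v_\lambda)\le(f,v)$), which the paper also invokes later (the verification of~\eqref{eq:63}), but only \emph{after} strong convergence has been secured.

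A smaller point: you locate the $\partial_H j$-monotonicity hypothesis on $A$ in the existence step on $[0,T]$, but in the paper it is used for an \emph{estimate}. It is exactly what permits replacing $(A_\lambda v_\lambda(0)+\delta v_\lambda(0),v'_\lambda(0))_H$ by $(A_\lambda\varphi+\delta\varphi,v'_\lambda(0))_H$ when bounding $\|v''_\lambda\|_{L^2_{\overline{s}}}$ and thence $\|v'_\lambda(0)\|_H$ (see the derivation of~\eqref{eq:25} and~\eqref{eq:30}). Your phrase ``pairing with $A_\lambda v_\lambda^T$'' is in the right spirit, but this hypothesis will be needed precisely there, not in setting up the variational problem.
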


Thanks to Lemma~\ref{lem:2}, Theorem~\ref{thm:2} implies that the
statement of Theorem~\ref{thm:1bis} holds. In particular, by
Theorem~\ref{thm:2}, if the boundary value $\varphi\in D(A)$, then the
unique solution $v$ of Dirichlet problem~\eqref{eq:2bis}
satisfies $v\in W^{1,2}_{\frac{1}{2},\frac{3s-1}{2s}}(H)$,
implying that the Neumann derivative
\begin{displaymath}
  -(2s)^{1-2s}
  v'(0)=:\Theta_{s}\varphi\qquad\text{exists in $H$.}
\end{displaymath}
This allows us to define the DtN operator $\Theta_{s}$
associated with $\tilde{A}_{1-2s}$ as given in the following Corollary.

\begin{corollary}
  \label{cor:1}
  Let $A$ be a maximal monotone operator on $H$ with
  $0\in \textrm{Rg}(A)$. 
  % , and $j_{\overline{D(A)}}$ the indicator
  % function on $\overline{D(A)}^{\mbox{}_{H}}$
  Then, for every $0<s<1$, the Dirichlet-to-Neumann operator
  $\Theta_{s}$ associated with $\tilde{A}_{1-2s}$ defined by
  \begin{displaymath}
    \Theta_{s}=\Bigg\{(\varphi,w)\in H\times H\,\Bigg\vert
    \begin{array}[c]{l}
      \exists\text{ a solution $v$ of Dirichlet problem~\eqref{eq:80}},\\%~\eqref{eq:87},\\
      \text{with $v(0)=\varphi$ and $w=-(2s)^{1-2s} v'(0)$ in $H$.}
    \end{array}
    \Bigg\}
  \end{displaymath}
  is a monotone, well-defined mapping $\Theta_{s} :
  D(\Theta_{s})\to H$ satisfying
  \begin{displaymath}
    D(A)\subseteq D(\Theta_{s})\subseteq \overline{D(A)}^{\mbox{}_{H}},\qquad\text{ and }\qquad
     D(A)\subseteq \textrm{Rg}(I_{H}+\lambda\,
      \Theta_{s})\qquad\text{for all $\lambda>0$.}% \\
    % &\text{ and }\Theta_{s}=\overline{\Theta_{s}^{0}}+\partial_{H}j_{\overline{D(A)}^{\mbox{}_{H}}}.
  \end{displaymath} 
  The closure $\overline{\Theta}_{s}$ of $\Theta_s$ in $H\times H$
  is characterized by
  \begin{displaymath}
    \overline{\Theta}_{s}:=\Bigg\{(\varphi,w)\in H\times H\,\Bigg\vert
    \begin{array}[c]{l}
      \exists \text{ $(\varphi_{n},w_{n})\in \Theta_{s}$ s.t. }
      \displaystyle\lim_{n\to +\infty}(\varphi_{n},w_{n})=(\varphi,w)\\
      \text{ in $H\times H$ \&} \text{ a strong solution $v$ of~\eqref{eq:80}}\\
      \text{satisfying } v(0)=\varphi\text{ in $H$.}
    \end{array}
    \Bigg\}
  \end{displaymath}
  with domain $D(\overline{\Theta}_{s})=\overline{D(A)}^{\mbox{}_{H}}$ and 
  \begin{displaymath}
    \overline{D(A)}^{\mbox{}_{H}} \subseteq \textrm{Rg}(I_{H}+\lambda\,
    \overline{\Theta_{s}})
    \qquad\text{for all $\lambda>0$.}
  \end{displaymath}
  If $D(A)$ is dense in $H$, then $\overline{\Theta}_{s}$ is
  maximal monotone.
  % Moreover, if $0\in A0$, then
  % \begin{equation}
  %   \label{eq:90}
  %   \norm{\Theta_{s}\varphi}_{H}\le
  %   2\,\norm{A^{0}\varphi}_{H}^{\frac{1}{2}}\,\norm{\varphi}_{H}^{\frac{1}{2}}
  %   \qquad\text{for every $\varphi\in D(A)$.}
  % \end{equation}
\end{corollary}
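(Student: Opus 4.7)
The strategy is to derive this corollary by specialising Theorem~\ref{thm:2} and then taking closures via routine monotone-operator manipulations. I would structure the argument in four blocks corresponding to the claims in the statement.

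First, I would establish the elementary properties of $\Theta_{s}$ as a map on $D(A)$. For $\varphi\in D(A)$, Theorem~\ref{thm:2} produces a unique $v\in C^{1}([0,+\infty);H)$ solving the Dirichlet problem~\eqref{eq:2bis} with $v'\in W^{1,2}_{\frac{1}{2},\frac{3s-1}{2s}}(H)$, so Proposition~\ref{propo:1} ensures $v'(0)\in H$; hence $\Theta_{s}\varphi:=-(2s)^{1-2s}v'(0)$ is single-valued on $D(A)$. Conversely, any $\varphi\in D(\Theta_{s})$ is the boundary value of some $v\in C([0,+\infty);H)$ with $v(z)\in D(A)$ for almost every $z>0$, so $\varphi\in\overline{D(A)}^{\mbox{}_{H}}$. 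Monotonicity follows from the contraction~\eqref{eq:88}: for two pairs $(\varphi_{i},w_{i})\in\Theta_{s}$, the function $z\mapsto\norm{v_{1}(z)-v_{2}(z)}_{H}^{2}$ is non-increasing on $[0,+\infty)$, and differentiating at $z=0^{+}$ yields $(v_{1}'(0)-v_{2}'(0),\varphi_{1}-\varphi_{2})_{H}\le 0$, equivalently $(w_{1}-w_{2},\varphi_{1}-\varphi_{2})_{H}\ge 0$.

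Secondly, for the range condition $D(A)\subseteq\textrm{Rg}(I_{H}+\lambda\Theta_{s})$ I would apply Theorem~\ref{thm:2} with $j(u):=\tfrac{1}{2\lambda}\norm{u}_{H}^{2}$, so that $\tilde j=(2s)^{-(1-2s)}j$ and $\partial_{H}j=\lambda^{-1}I_{H}$ is (weakly) continuous. This $j$ is convex, continuous, strongly coercive, satisfies $j(0)=0$, and the requirement that $A$ be $\partial_{H}j$-monotone collapses to contractivity of $J_{\mu}^{A}$ in $\norm{\cdot}_{H}$, which is automatic. The resulting solution $v$ satisfies $v'(0)=(v(0)-\varphi)/(\lambda(2s)^{1-2s})$; setting $\psi:=v(0)\in D(\Theta_{s})$ yields $\psi+\lambda\,\Theta_{s}\psi=\varphi$.

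Thirdly, for the closure $\overline{\Theta}_{s}$, suppose $(\varphi_{n},w_{n})\in\Theta_{s}$ with $(\varphi_{n},w_{n})\to(\varphi,w)$ in $H\times H$, and fix $y\in A^{-1}\{0\}$. The contraction~\eqref{eq:88} forces the associated $v_{n}$ to converge in $C([0,+\infty);H)$ to some $v$, while~\eqref{eq:65} and~\eqref{eq:77} give weak compactness of $\{zv_{n}'\}$ and $\{z^{2}v_{n}''\}$ in $L^{2}_{\star}(H)$. Identifying these weak limits with $v'$ and $v''$ on every compact sub-interval of $(0,+\infty)$ and invoking weak-strong closedness of the graph of $A$ in $H\times H_{w}$, I would pass to the limit in $z^{-(1-2s)/s}v_{n}''\in Av_{n}$ to conclude that $v$ is a strong solution of~\eqref{eq:80} with $v(0)=\varphi$; this proves the stated characterisation. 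The range condition $\overline{D(A)}^{\mbox{}_{H}}\subseteq\textrm{Rg}(I_{H}+\lambda\,\overline{\Theta}_{s})$ then follows by approximating $\varphi\in\overline{D(A)}^{\mbox{}_{H}}$ by $\varphi_{n}\in D(A)$, solving $\psi_{n}+\lambda\,\Theta_{s}\psi_{n}=\varphi_{n}$ by the previous step, and using monotonicity to obtain $\norm{\psi_{n}-\psi_{m}}_{H}\le\norm{\varphi_{n}-\varphi_{m}}_{H}$; hence $\psi_{n}\to\psi$, $w_{n}=(\varphi_{n}-\psi_{n})/\lambda\to w:=(\varphi-\psi)/\lambda$, so $(\psi,w)\in\overline{\Theta}_{s}$ with $\psi+\lambda w=\varphi$. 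The domain identification $D(\overline{\Theta}_{s})=\overline{D(A)}^{\mbox{}_{H}}$ combines the inclusion inherited from the first block with the reverse inclusion obtained by letting $\lambda\to 0^{+}$ in the range condition (the resolvents $\psi_{\lambda}\in D(\overline{\Theta}_{s})$ tend strongly to $\varphi$). When $D(A)$ is dense, $\overline{D(A)}^{\mbox{}_{H}}=H$, so $\textrm{Rg}(I_{H}+\lambda\,\overline{\Theta}_{s})=H$ and Minty's theorem delivers maximal monotonicity.

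The principal difficulty is the passage-to-the-limit argument in the third block: the weighted $L^{2}$-bounds in~\eqref{eq:65} and~\eqref{eq:77} are calibrated precisely to allow this, but one must identify the weak limits as distributional derivatives of $v$, verify $v(z)\in D(A)$ pointwise almost everywhere, and apply a Minty-type graph-closure argument against arbitrary $\eta\in A\xi$ with suitable test functions $\xi$. This is essentially the calculation already sketched in ``step~5'' of the proof of Theorem~\ref{thm:2} and invoked in the proof of Theorem~\ref{thm:1}, so no new idea is required, but careful book-keeping between the coercive estimates and the regular bounds~\eqref{eq:14bis2A}-\eqref{eq:31bis2A} is needed when $\varphi\in\overline{D(A)}^{\mbox{}_{H}}\setminus D(A)$.
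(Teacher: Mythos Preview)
Your proposal is correct and follows essentially the same route as the paper. The first two blocks (well-definedness, domain inclusions, monotonicity, and the range condition via $j(u)=\tfrac{1}{2\lambda}\norm{u}_{H}^{2}$) match the paper's argument almost verbatim, and the fourth block (approximation for $\textrm{Rg}(I_{H}+\lambda\,\overline{\Theta}_{s})$ and maximal monotonicity via Minty) is identical.

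The only noteworthy difference is in your third block. For the closure characterisation you propose to pass to the limit in the equation $z^{-(1-2s)/s}v_{n}''\in Av_{n}$ directly, using the uniform bounds~\eqref{eq:65} and~\eqref{eq:77} together with a Minty-type graph argument. This works, and indeed it is exactly the machinery of ``step~5'' in the proof of Theorem~\ref{thm:2}. However, the paper takes a shortcut here: once~\eqref{eq:88} forces $v_{n}\to v$ in $C^{b}([0,+\infty);H)$ and hence $v(0)=\varphi\in\overline{D(A)}^{\mbox{}_{H}}$, the paper simply \emph{invokes} the last clause of Theorem~\ref{thm:2} (existence of a strong solution of~\eqref{eq:2bis} for any boundary datum in $\overline{D(A)}^{\mbox{}_{H}}$) rather than reproving it. Your approach is self-contained but redundant given that this existence result is already packaged in Theorem~\ref{thm:2}; the paper's shortcut makes the corollary a genuine consequence of the theorem rather than a partial re-derivation.
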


By Lemma~\ref{lem:2}, Corollary~\ref{cor:1} implies that
Theorem~\ref{thm:DtN} holds. The rest of this section is dedicated to
the proof of Theorem~\ref{thm:2}\medskip

By using the change of variable~\eqref{eq:15},
Proposition~\ref{propo:1} can be rewritten for functions $v\in
W^{1,2}_{\frac{1-s}{2s},\frac{1}{2}}(H)$. Moreover, by~\cite{MR3772192}, we have
that the following \emph{integration by parts} holds.

\begin{lemma}\label{lem:ibp}
  Let $0<s<1$. Then, the following statements hold.
  \begin{enumerate}
  \item \label{lem:ibp1} ({\bfseries Trace theorem on
      $W^{1,2}_{\frac{1-s}{2s},\frac{1}{2}}(H)$}) For
    $v\in W^{1,2}_{\frac{1-s}{2s},\frac{1}{2}}(H)$, the
    limit $v(0):=\lim_{z\mapsto 0+}v(z)$ exists in $H$. In particular,
    the trace operator
    \begin{displaymath}
      \textrm{Tr} : W^{1,2}_{\frac{1-s}{2s},\frac{1}{2}}(H)\to
      H,\quad v\mapsto v(0)
    \end{displaymath}
    is continuous and surjective.

\item \label{lem:ibp1bis} ({\bfseries Trace theorem on
      $W^{1,2}_{\frac{1}{2},\frac{3s-1}{2s}}(H)$}) For
  $v\in W^{1,2}_{\frac{1}{2},\frac{3s-1}{2s}}(H)$, the limit
  $v(0):=\lim_{z\mapsto 0+}v(z)$ exists in $H$. In particular, the
  trace operator
  \begin{displaymath}
    \textrm{Tr} : W^{1,2}_{\frac{1}{2},\frac{3s-1}{2s}}(H)\to
  H,\quad v\mapsto v(0)
 \end{displaymath}
 is continuous and surjective, and there are $c_{1}$, $c_{2}>0$ such that
  \begin{equation}
    \label{eq:42}
    c_{1}\,\norm{x}_{H}\le \inf_{\xi\in W^{1,2}_{\frac{1}{2},\frac{3s-1}{2s}}(H) : \xi(0)=x}
    \norm{\xi}_{W^{1,2}_{\frac{1}{2},\frac{3s-1}{2s}}(H)}\le c_{2}\norm{x}_{H}
  \end{equation}
   for every $x\in H$.

   \item \label{lem:ibp2} For $w\in W^{1,2}_{\frac{1}{2},\frac{3s-1}{2s}}(H)$ and $\xi\in
     W^{1,2}_{\frac{1-s}{2s},\frac{1}{2}}(H)$, the functions
     \begin{displaymath}
       z\mapsto (w'(z),\xi(z))_{H}\text{ and }
       z\mapsto (w(z),\xi'(z))_{H}
     \end{displaymath}
     belong to $L^{1}(0,+\infty)$ and the following integration by parts
   rule holds:
   \begin{equation}
     \label{eq:32}
     \int_{0}^{+\infty}(w'(z),\xi(z))_{H}\,\dz=-(w(0),\xi(0))_{H}-\int_{0}^{+\infty}(w(z),\xi'(z))_{H}\,\dz.
   \end{equation}
  \end{enumerate}
\end{lemma}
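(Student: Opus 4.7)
The plan is to pull everything back to the unweighted-time spaces $W^{1,2}_{s}(H)$ and $W^{1,2}_{1-s}(H)$ via the change of variable~\eqref{eq:15}, where the analogous trace and integration-by-parts results are already available (namely Proposition~\ref{propo:1} and the integration-by-parts proposition stated at the end of Section~\ref{sec:pre}). The translation dictionary between the $t$-spaces and the weighted $z$-spaces is exactly the content of Lemma~\ref{lem:2}\eqref{lem:2claim2}.

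For the two trace statements (claims~\eqref{lem:ibp1} and~\eqref{lem:ibp1bis}), given $v$ in either weighted space I would define $u(t):=v\bigl((t/(2s))^{2s}\bigr)$. By Lemma~\ref{lem:2}\eqref{lem:2claim2}, the assignment $v\mapsto u$ is a bicontinuous isomorphism from $W^{1,2}_{\frac{1-s}{2s},\frac{1}{2}}(H)$ onto $W^{1,2}_{1-s}(H)$, and from $W^{1,2}_{\frac{1}{2},\frac{3s-1}{2s}}(H)$ onto $W^{1,2}_{s}(H)$. Since $t\to 0+$ if and only if $z\to 0+$ and $u(t)=v(z)$ under the substitution, Proposition~\ref{propo:1} applied with $s$ replaced by $1-s$ (respectively, with $s$) yields the existence of the trace $v(0)=u(0)$, and continuity and surjectivity of the $v$-side trace follow by composing with the isomorphism. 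The two-sided estimate~\eqref{eq:42} is inherited by direct change of variable on norms, adjusting the constants $c_1,c_2$.

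For the integration by parts~\eqref{lem:ibp2}, let $u_w$ and $u_\xi$ denote the $t$-pullbacks of $w$ and $\xi$, so that $u_w\in W^{1,2}_{s}(H)$ and $u_\xi\in W^{1,2}_{1-s}(H)$. The integration-by-parts proposition of Section~\ref{sec:pre} then gives
\[
-\int_0^{+\infty}(u_w'(t),u_\xi(t))_H\,\dt \;=\; \int_0^{+\infty}(u_w(t),u_\xi'(t))_H\,\dt + (u_w(0),u_\xi(0))_H.
\]
Using~\eqref{eq:36}, each derivative produces a factor $z^{(2s-1)/(2s)}$, while the Jacobian contributes $\dt=z^{(1-2s)/(2s)}\,\dz$; the two factors cancel exactly, so each $t$-integral transforms into the corresponding $z$-integral with no residual weight, and $u(0)=v(0)$ handles the boundary term. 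This produces~\eqref{eq:32}, and the $L^1(0,+\infty)$-integrability of $z\mapsto(w'(z),\xi(z))_H$ and $z\mapsto(w(z),\xi'(z))_H$ is a routine Cauchy-Schwarz applied to the defining weighted $L^2$ bounds, as the paired exponents sum to zero.

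The only real obstacle is the bookkeeping of weight exponents under the change of variable; once Lemma~\ref{lem:2}\eqref{lem:2claim2} is in hand as the dictionary, no genuinely new analytic ingredient is needed beyond Proposition~\ref{propo:1} and its integration-by-parts companion.
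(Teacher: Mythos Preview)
Your proposal is correct and follows essentially the same approach as the paper: pull back to the $t$-variable via Lemma~\ref{lem:2}\eqref{lem:2claim2}, invoke Proposition~\ref{propo:1} (with exponent $1-s$ for~\eqref{lem:ibp1} and $s$ for~\eqref{lem:ibp1bis}) for the trace statements, and use the integration-by-parts proposition from the end of Section~\ref{sec:pre} together with the change of variable for~\eqref{eq:32}. The paper additionally writes out the Cauchy--Schwarz step for the $L^{1}$-integrability explicitly (splitting $z=z^{\frac{3s-1}{2s}}z^{\frac{1-s}{2s}}$), which is exactly what you sketch in your final sentence.
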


\begin{proof}
  % The first statement of claim~\eqref{lem:ibp1} was outlined
  As mentioned in the proof
  of Lemma~\ref{lem:2}, $u\in W^{1,2}_{1-s}(H)$ if and only if $v\in
  W^{1,2}_{\frac{1-s}{2s},\frac{1}{2}}(H)$, and since
  $\lim_{z\to 0+}t(z)=0$ and $\lim_{t\to 0+}z(t)=0$, one has that the
  fact that $u(0):=\lim_{t\to
    0+}u(t)$ exists in $H$ is equivalent to $v(0):=\lim_{z\to 0+}v(z)$ exists in $H$.
 Therefore by Proposition~\ref{propo:1}, for every $v\in W^{1,2}_{\frac{1-s}{2s},\frac{1}{2}}(H)$,
    the limit $v(0):=\lim_{z\mapsto 0+}v(z)$ exists in $H$ and the trace
    operator $\textrm{Tr} :
    W^{1,2}_{\frac{1-s}{2s},\frac{1}{2}}(H)\to H$ is surjective. To see that the operator
    $\textrm{Tr}$ is continuous, it suffices to note that
  \begin{displaymath}
    \norm{u}_{W^{1,2}_{1-s}(H)}
    % &:=\left(\norm{t^{1-s}u}^{2}_{L^{2}_{\star}(H)}+\norm{t^{1-s}u'}^{2}_{L^{2}_{\star}(H)}
    % \right)^{1/2}\\
    %&=(2s)^{\frac{1-2s}{2}}\left(\norm{z^{\frac{1-s}{2s}}v}^{2}_{L^{2}_{\star}(H)}
    %  +\norm{z^{\frac{1}{2}}v'}^{2}_{L^{2}_{\star}(H)}
    %\right)^{1/2}\\
    =(2s)^{\frac{1-2s}{2}}\norm{v}_{W^{1,2}_{\frac{1-s}{2s},\frac{1}{2}}(H)}.
  \end{displaymath}
  Thus, claim~\eqref{lem:ibp1} of this proposition holds and since by
  Lemma~\ref{lem:2}, $u\in W^{1,2}_{s}(H)$ if and only if
  $v\in W^{1,2}_{\frac{1}{2}, \frac{3s-1}{2s}}(H)$, one shows similarly that
  claim~\eqref{lem:ibp1bis} as well holds true. Next, let
  $w\in W^{1,2}_{\frac{1}{2}, \frac{3s-1}{2s}}(H)$ and
  $\xi\in W^{1,2}_{\frac{1-s}{2s},\frac{1}{2}}(H)$. Since
  $z=z^{\frac{3s-1}{2s}}\, z^{\frac{1-s}{2s}}$,
  H\"older's inequality yields that
  \begin{align*}
    \int_{0}^{\infty}\abs{(w'(z),\xi(z))_{H}}\,\dz 
    & =\int_{0}^{\infty}\abs{(z^{\frac{3s-1}{2s}}w'(z),
      z^{\frac{1-s}{2s}}\xi(z))_{H}}\,\frac{\dz}{z}\\
    & \le \norm{z^{\frac{3s-1}{2s}}w'}_{L^{2}_{\ast}(H)}\,\norm{z^{\frac{1-s}{2s}}\xi}_{L^{2}_{\ast}(H)}
  \end{align*}
  and
  \begin{align*}
    \int_{0}^{\infty}\abs{(w(z),\xi'(z))_{H}}\,\dz 
    & =\int_{0}^{\infty}\abs{(z^{\frac{1}{2}}w(z),
      z^{\frac{1}{2}}\xi'(z))_{H}}\,\frac{\dz}{z}\\
    & \le \norm{z^{\frac{1}{2}}w}_{L^{2}_{\ast}(H)}\,\norm{z^{\frac{1}{2}}\xi'}_{L^{2}_{\ast}(H)},
  \end{align*}
  proving that $(w',\xi)_{H}$ and $(w,\xi')_{H}\in L^{1}(0,+\infty)$. Finally, to see that integration by
  parts~\eqref{eq:32} holds, one applies
  \cite[Proposition~3.9]{MR3772192} to $w(z)=u(t)$ and
  $\xi(z)=v(t)$ with the change of variable~\eqref{eq:15}.
\end{proof}

With these preliminaries, we can now prove the uniqueness of solutions
to problem~\eqref{eq:2}. Here,
our proof adapts an idea by Brezis~\cite{MR0317123} to the more general case $0<s<1$.

\begin{proof}[Proof of Theorem~\ref{thm:2} (uniqueness and proof of inquality~\eqref{eq:88})]
  Suppose $v_1$ and $v_2\in L^{\infty}(H)$ are two strong solutions
  of~\eqref{eq:80} and set $w= v_1-v_2$. Then, by the monotonicity of
  $A$ and by~\eqref{eq:80},
  \begin{displaymath}
    z^{-\frac{1-2s}{s}}(w''(z), w(z))_{H} 
    = (z^{-\frac{1-2s}{s}}v_1''(z)- z^{-\frac{1-2s}{s}}v_2'(z),v_1(z) -v_2(z))_{H}\ge 0
  \end{displaymath}
  for almost every $z>0$.Thus, $(w''(z), w(z))_{H} \ge 0$ for almost
  every $z > 0$ and so,
  \begin{equation}
   \label{eq:52}  % eq:2
   \begin{split}
     \frac{1}{2} \frac{\td^2}{\td z^2}\norm{w(z)}^2_{H}&=\frac{\td}{\td
       z}(w'(z),w(z))_{H}\\
     &= (w''(z),w(z))_{H}+\norm{w'(z)}_{H}^{2}
     \ge\norm{w'(z)}_{H}^{2} \ge 0
   \end{split}
  \end{equation}
  for a.e. $z>0$. Therefore, the function
  $z\rightarrow \norm{w(z)}^2_{H}$ is convex and since by hypothesis,
  $w$ is bounded on $\R_{+}$ with values in $H$, the function
  $z\rightarrow \norm{w(z)}^2_{H}$ is necessarily monotonically
  decreasing on $\mathbb{R}_+$. In particular, this argument shows
  that for every two strong solutions $v_{1}$,
  $v_{2}\in L^{\infty}(H)$ of~\eqref{eq:80}, one as that~\eqref{eq:88}
  holds.  Further, from this, we can deduce that
  \begin{displaymath}
    (w'(z),w(z))_{H}=\frac{\td}{\dz} \frac{1}{2}\norm{w(z)}_{H}^{2}\le 0
  \end{displaymath}
  for every $z\in \R_{+}$. 
  % Because $w(z)$ is continuous at $z =$ 0, thus,
  % \begin{displaymath}
  %   (w'(z),w(z))_{H}=\frac{\td}{\d dz} \frac{1}{2}\norm{w(z)}_{H}^{2}\le 0\qquad\text{for all $z\in [0,+\infty)$.}
  % \end{displaymath}
  Now, note that for $0<s<1$, $\alpha=1-2s>0$. Thus, for
  $\tilde{j} := (2s)^{-(1-2s)}\, j$, $\partial_{H}\tilde{j}$ is
  monotone. Hence, if $v_1$ and $v_2$ are solutions of
  problem~\eqref{eq:2} for the same $\varphi\in H$, then by the
  condition $v_i'(0)\in \partial_{H} \tilde{j}(v(0)-\varphi)$ for
  $i=1,2$, one has that
\begin{displaymath}
  0\ge (w'(0),w(0))_{H}=(v_1'(0)-v_2'(0),(v_1(0)-\varphi)-(v_2(0)-\varphi))_{H}\ge 0.
\end{displaymath}
Combining this with~\eqref{eq:52}, one finds
\begin{displaymath}
  0\ge (w'(z), w(z))_{H} %- (w'(0), w(0))_{H} 
  = \int_{0}^{z} \frac{\td}{\td r}(w'(r),w(r))_{H}\ds \ge \int_{0}^{z} \norm{w'(r)}^2_{H}\dr,
\end{displaymath}
implying that $w'(z)=0$ in $H$ for all $z\ge 0$. Thus,
$v_1'(0)=v_2'(0)$ and since %by hypothesis,
\begin{displaymath}
\tilde{j}(v_1(0) - \varphi) - \tilde{j}(v_2(0)- \varphi) \ge
(v_2'(0),v_1(0)-v_2(0))
\end{displaymath}
and
\begin{displaymath} 
\tilde{j}(v_2(0) - \varphi) - \tilde{j}(v_1(0) - \varphi) \ge (v_1'(0),v_2(0)-v_1(0)),
\end{displaymath}
it follows that
\begin{equation}
\label{eq:7}
\tilde{j}(v_2(0) - \varphi) - \tilde{j}(v_1(0)- \varphi) = (v_1'(0) , v_2(0) - v_1(0)).
\end{equation}
Now, if $v_1(0) \neq v_2(0)$, then by the strict convexity
of $\tilde{j}$ and~\eqref{eq:7},
\begin{align*}
  \frac{1}{2} \tilde{j}(v_1(0) - \varphi) + \frac{1}{2} \tilde{j}(v_2(0) -\varphi) 
  &>\tilde{j}\Big( \frac{v_1(0) + v_2(0)}{2} - \varphi \Big) \\ 
  &\ge \tilde{j}(v_1(0) - \varphi) + \Big(v_1'(0),\frac{v_2(0) -
    v_1(0)}{2} \Big)\\ 
  &= \frac{1}{2}\tilde{j}(v_1(0) - \varphi) + \frac{1}{2}\tilde{j}(v_2(0) - \varphi),
\end{align*}
    which is a contradiction. Therefore, $v_1(0)=v_2(0)$, implying that
    $v_{1}=v_{2}$. This completes the proof of uniqueness.
\end{proof}

%\HE_{\beta} is now L^{2}_{\underline{s}}(H)

For proving existence of strong solutions of problem~\eqref{eq:2}, we
need the following proposition.

\begin{proposition}\label{prop:1}
 Suppose $j : H \to \overline{\R}_+$ is a convex, strongly coercive, lower
  semicontinuous functional satisfying $j(0) = 0$, let $\varphi\in H$
  %such that $-\varphi\in D(j)$ 
  and for $0<s<1$, $\tilde{j} := (2s)^{-(1-2s)}\, j$. Further,
  let $\E_{1}$ and $\E_{2}  : L^{2}_{\underline{s}}(H)\to
\R\cup\{+\infty\}$ be given by
\begin{displaymath}
\E_{1}(v):=
\begin{cases}
        \frac{1}{2} \displaystyle\int_{0}^{+\infty} \norm{v'(z)}^2_H\,
        \dz &\text{if  $v \in \WE_{\frac{1-s}{2s},\frac{1}{2}}^{1,2}(H)$,} \\
        +\infty &\text{if otherwise.}
    \end{cases}
\end{displaymath}
and
\begin{displaymath}
\E_{2}(v):=
\begin{cases}
        \tilde{j}(v(0)-\varphi)&\text{if  $v \in \WE_{\frac{1-s}{2s},\frac{1}{2}}^{1,2}(H)$,} \\
        +\infty &\text{ if otherwise.}
    \end{cases}
\end{displaymath}
Then, the functional $\E : L^{2}_{\underline{s}}(H)\to \R\cup\{+\infty\}$ defined
by $\E=\E_{1}+\E_{2}$ is proper, convex and lower semicontinuous on
$L^{2}_{\underline{s}}(H)$. In particular, the subdifferential
$\partial_{L^{2}_{\underline{s}}}\E$ of $\E$ is a mapping
$\partial_{L^{2}_{\underline{s}}}\E  : D(\partial\E)\to L^{2}_{\underline{s}}(H)$ given by
  \begin{displaymath}
    \partial_{L^{2}_{\underline{s}}}\E
    =\Big\{(v,-z^{-\frac{1-2s}{s}}v'')\in 
    L^{2}_{\underline{s}}(H)\times L^{2}_{\underline{s}}(H)\ \Big\vert\ v'(0)
    \in \partial_{H}\tilde{j}(v(0)-\varphi) \Big\}.
\end{displaymath}
In particular, for every $v\in D(\partial_{L^{2}_{\underline{s}}}\E)$, one has that
\begin{displaymath}
  v\in W^{2,2}_{\frac{1-s}{2s},\frac{1}{2},\frac{3s-1}{2s}}(H)
 \cap %C([0,+\infty);H)\cap
  C^{1}([0,+\infty);H).%\cap W^{2,2}_{loc}((0,+\infty);H)
\end{displaymath}
% and 
% \begin{equation}
%   \label{eq:18bis}
%   (v'',v)_{H}\in L^{1}(0,+\infty),
% \end{equation}
\end{proposition}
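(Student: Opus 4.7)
The plan is to break the proof into three tasks: (i) verify that $\E = \E_{1} + \E_{2}$ is proper, convex, and lower semicontinuous on $L^{2}_{\underline{s}}(H)$; (ii) identify $\partial_{L^{2}_{\underline{s}}}\E$ with the claimed operator; and (iii) read off the additional regularity of elements of $D(\partial \E)$. Properness and convexity are routine: $\E_{1}$ is a quadratic seminorm, and $\E_{2}$ is the composition of the continuous affine map $v \mapsto v(0)-\varphi$ (well-defined on $W^{1,2}_{\frac{1-s}{2s},\frac{1}{2}}(H)$ by Lemma~\ref{lem:ibp}\eqref{lem:ibp1}) with the convex functional $\tilde j$. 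For lower semicontinuity, let $v_{n}\to v$ in $L^{2}_{\underline{s}}(H)$ with $\liminf\E(v_{n}) < \infty$; along a subsequence both $\E_{1}(v_{n})$ and $\tilde j(v_{n}(0)-\varphi)$ stay uniformly bounded. Strong coercivity~\eqref{eq:47} of $\tilde j$ (a positive multiple of $j$) bounds the traces $v_{n}(0)$ in $H$, and combined with the uniform $L^{2}$-bound on $v_{n}'$ this makes $(v_{n})$ a bounded sequence in $W^{1,2}_{\frac{1-s}{2s},\frac{1}{2}}(H)$. Extracting a weakly convergent subsequence (whose limit must be $v$ by the $L^{2}_{\underline{s}}$-convergence), continuity of the trace and weak lower semicontinuity of $\tilde j$ and of the unweighted $L^{2}$-norm of $v'$ give $\E(v)\le \liminf \E(v_{n})$.

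The core step is the identification of $\partial \E$. Fix $(v,f)\in \partial\E$. Testing with $w\in C_{c}^{\infty}(\R_{+};H)$ (so $\E_{2}$ is unchanged) and applying the standard variational argument to the quadratic $\E_{1}$ gives $-v''=z^{(1-2s)/s}f$ in the distributional sense. A direct weight computation then shows $z^{\frac{3s-1}{2s}}v''\in L^{2}_{\star}(H)$, so $v\in W^{2,2}_{\frac{1-s}{2s},\frac{1}{2},\frac{3s-1}{2s}}(H)$ and $f=-z^{-(1-2s)/s}v''$; in particular $v'\in W^{1,2}_{\frac{1}{2},\frac{3s-1}{2s}}(H)$, so Lemma~\ref{lem:ibp}\eqref{lem:ibp1bis} yields $v'(0)\in H$. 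For arbitrary $w\in W^{1,2}_{\frac{1-s}{2s},\frac{1}{2}}(H)$, substitute $\lambda w$ in the subdifferential inequality, divide by $\lambda>0$, and pass $\lambda\to 0+$; convexity of $\tilde j$ guarantees that the right directional derivative $D^{+}\tilde j(v(0)-\varphi)(w(0))$ exists, giving
\begin{displaymath}
\int_{0}^{+\infty}(v'(z),w'(z))_{H}\,\dz + D^{+}\tilde j(v(0)-\varphi)(w(0)) \ge \int_{0}^{+\infty}(f(z),w(z))_{H}\,z^{(1-2s)/s}\,\dz.
\end{displaymath}
The integration by parts rule Lemma~\ref{lem:ibp}\eqref{lem:ibp2}, applied with $v'\in W^{1,2}_{\frac{1}{2},\frac{3s-1}{2s}}(H)$ and $w\in W^{1,2}_{\frac{1-s}{2s},\frac{1}{2}}(H)$, rewrites the first integral as $-(v'(0),w(0))_{H}+\int_{0}^{+\infty}(f,w)_{H}\,z^{(1-2s)/s}\,\dz$; cancelling the bulk terms leaves $D^{+}\tilde j(v(0)-\varphi)(w(0))\ge (v'(0),w(0))_{H}$. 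Surjectivity of the trace (Lemma~\ref{lem:ibp}\eqref{lem:ibp1}) lets $w(0)$ range over all of $H$, whence $v'(0)\in \partial_{H}\tilde j(v(0)-\varphi)$. The reverse inclusion follows by running the same integration by parts together with the elementary subdifferential inequality for $\tilde j$ at $v(0)-\varphi$.

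Single-valuedness is then immediate, since every element of $\partial \E(v)$ must coincide with $-z^{-(1-2s)/s}v''$. Finally, $v\in W^{2,2}_{\frac{1-s}{2s},\frac{1}{2},\frac{3s-1}{2s}}(H)$ forces $v'\in W^{1,2}_{\frac{1}{2},\frac{3s-1}{2s}}(H)$, and Lemma~\ref{lem:ibp}\eqref{lem:ibp1bis} provides the existence of $v'(0)\in H$ and continuity of $v'$ up to $0$; together with $v'\in W^{1,2}_{loc}((0,+\infty);H)$, this gives $v\in C^{1}([0,+\infty);H)$. The main obstacle is the integration by parts step: the sharp regularity $v'\in W^{1,2}_{\frac{1}{2},\frac{3s-1}{2s}}(H)$ must be extracted from the distributional identity \emph{before} the pairing $\int_{0}^{+\infty}(v',w')\,\dz$ can be legitimately expanded against general test functions $w$, so that the boundary term $(v'(0),w(0))_{H}$ can be isolated and matched with the subdifferential of $\tilde j$.
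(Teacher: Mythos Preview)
Your proof is correct and follows essentially the same route as the paper: first use compactly supported test functions to identify $-v''=z^{(1-2s)/s}f$ distributionally (hence $v\in W^{2,2}_{\frac{1-s}{2s},\frac{1}{2},\frac{3s-1}{2s}}(H)$ and $v'(0)$ exists), then invoke Lemma~\ref{lem:ibp}\eqref{lem:ibp2} with general $w\in W^{1,2}_{\frac{1-s}{2s},\frac{1}{2}}(H)$ and surjectivity of the trace to extract the boundary condition $v'(0)\in\partial_{H}\tilde j(v(0)-\varphi)$. One small remark on the lower semicontinuity step: your appeal to strong coercivity of $\tilde j$ to bound the traces $v_{n}(0)$ is unnecessary (and by itself, a trace bound together with $\|v_{n}'\|_{L^{2}}$ bounded does \emph{not} control $\|v_{n}\|_{L^{2}_{\underline{s}}}$); the paper simply uses that $v_{n}\to v$ in $L^{2}_{\underline{s}}(H)$ already bounds the weighted $L^{2}$-part, and $\tilde j\ge 0$ forces $\E_{1}(v_{n})$ bounded, which together give boundedness in $W^{1,2}_{\frac{1-s}{2s},\frac{1}{2}}(H)$ directly.
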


%Note, $v\in L^{2}_{\underline{s}}(H)$ and $v'\in L^{2}$ is
%equivalent to $v\in W^{1,2}_{\frac{1-s}{s},\frac{1}{2}}(H)$

\begin{proof}[Proof of Proposition~\ref{prop:1}]
  It is clear that $\E$ is convex, and $\E$ is proper since by
  Lemma~\ref{lem:ibp}, for every $\varphi\in H$, there is a $v\in
  W^{1,2}_{\frac{1-s}{2s},\frac{1}{2}}(H)$ satisfying
  $v(0)=\varphi$ and $\E(v)=\E_{1}(v)$ is finite. To see that $\E$ is
  lower semicontinuous on $L^{2}_{\underline{s}}(H)$, let
  $c\in \R$ and $(v_{n})_{n\ge 1}$ be a sequence in
  $L^{2}_{\underline{s}}(H)$ such that $v_{n}\to v$ in
  $L^{2}_{\underline{s}}(H)$ for some
  $v\in L^{2}_{\underline{s}}(H)$ and satisfying
  $\E(v_{n})\le c$ for all $n\ge 1$. Since $\tilde{j}\ge 0$, this
  implies that $(v_{n})_{n\ge 1}$ is bounded in
  $W^{1,2}_{\frac{1-s}{2s},\frac{1}{2}}(H)$. Since
  $W^{1,2}_{\frac{1-s}{2s},\frac{1}{2}}(H)$ is reflexive, one
  can conclude that
  $v\in W^{1,2}_{\frac{1-s}{2s},\frac{1}{2}}(H)$ and there is
  a subsequence of $(v_{n})_{n\ge 1}$, which we denote, for
  simplicity, again by $(v_{n})_{n\ge 1}$ such that $v'_{n}$
  converges weakly to $v'$ in $L^{2}(H)$. By
  Lemma~\ref{lem:ibp}, the trace map
  $\textrm{Tr} : W^{1,2}_{\frac{1-s}{2s},\frac{1}{2}}(H)\to H$
  is linearly bounded and so, $v_{n}(0)$
  converges weakly to $v(0)$ in $H$. Therefore, and since $\E$ is
  convex, we can conclude that $v\in D(\E)$ and $\E(v)\le c$. 
  It remains to characterize the subdifferential
  \begin{displaymath}
    \partial_{L^{2}_{\underline{s}}}\E:=\Bigg\{(v,w)\in
    L^{2}_{\underline{s}}(H)\times L^{2}_{\underline{s}}(H)\ \Bigg\vert\
    \begin{array}[c]{c}
\E(\hat{v})-\E(v)
    \ge (w, \hat{v}-v)_{L^{2}_{\underline{s}}(H)}\\
      \hspace{2.2cm}\text{ for all } \hat{v} \in D(\E)
    \end{array}
\Bigg\}.
\end{displaymath}
For every $v\in D(\E)$, the weak derivative
$v'\in L^{2}(H)$. Hence $v\in C([0,+\infty);H)$. Now, let
$(v,w)\in \partial_{L^{2}_{\underline{s}}}\E$ and take
$\hat{v}=v+\varepsilon\xi$ for $\varepsilon \in \R$ and
$\xi \in D(\E)$. Then,
\begin{displaymath}
      \E(v+\varepsilon\xi)-\E(v) \ge \varepsilon\,(w,\xi)_{L^{2}_{\underline{s}}(H)}.
\end{displaymath}
Suppose first that $\varepsilon>0$. Then, dividing the above
inequality by $\varepsilon$ gives
\begin{equation}\label{eq:16}
  \begin{split}
     &\int_{0}^{+\infty}
        \frac{\frac{1}{2}\norm{v'(z)+\varepsilon\xi'(z)}^2_H -
     \frac{1}{2}\norm{v'(z)}^2_H}{\varepsilon}\,\dz\\
     &\hspace{0.4cm} +
       \frac{\tilde{j}(v(0)+\varepsilon\xi(0)-\varphi)-\tilde{j}(v(0)-\varphi)}{\varepsilon}
       \ge \int_{0}^{+\infty}(w(z),\xi(z))_H z^{\frac{1-2s}{s}} \dz.
  \end{split}
\end{equation}
Since
\begin{displaymath}
     \lim_{\varepsilon\to 0+} \int_{0}^{+\infty}
        \frac{\frac{1}{2}\norm{v'(z)+\varepsilon\xi'(z)}^2_H -
     \frac{1}{2}\norm{v'(z)}^2_H}{\varepsilon}\,\dz=
   \int_{0}^{+\infty} (v',\xi')_{H}\,\dz,
\end{displaymath}
and $\tilde{j}$ is convex, we can conclude by sending
$\varepsilon\to 0+$ in~\eqref{eq:16} that
\begin{align*}
    &\int_{0}^{+\infty} (v',\xi')_{H}\,\dz+\inf_{\varepsilon>0}
    \frac{\tilde{j}(v(0)+\varepsilon\xi(0)-\varphi)-\tilde{j}(v(0)-\varphi)}{\varepsilon}\\
    &\hspace{6cm}\ge \int_{0}^{+\infty}(w(z),\xi(z))_H z^{\frac{1-2s}{s}} \dz.
\end{align*}
In particular, we have that
\begin{equation}\label{eq:17}
    \begin{split}
    &  \int_{0}^{+\infty} (v',\xi')_{H}\,\dz+
    \tilde{j}(v(0)+\xi(0)-\varphi)-\tilde{j}(v(0)-\varphi)\\
      &\hspace{6cm}\ge \int_{0}^{+\infty}(w(z),\xi(z))_H z^{\frac{1-2s}{s}} \dz.
    \end{split}
\end{equation}
Note, for any $\xi \in C^1_c((0,+\infty);H)$, the sum
$v+\varepsilon\xi$ belongs to $D(\E )$ and % which gives us $\xi(0)=0$ and then
\begin{displaymath}
     \frac{\tilde{j}(v(0)+\epsilon\xi(0)-\varphi)-\tilde{j}(v(0)-\varphi)}{\epsilon}
        = 0.
        %\frac{\tilde{j}(v(0)-a)-\tilde{j}(v(0)-a)}{\epsilon} = 0.
\end{displaymath}
Thus, proceeding as before with $\xi \in C^1_c((0,+\infty);H)$ and
$\varepsilon<0$, one obtains
\begin{align*}
      \int_{0}^{+\infty}
      (v',\xi')_{H}\,\dz&=\int_{0}^{+\infty}(w(z),\xi(z))_H z^{\frac{1-2s}{s}}
      \dz\\
       &=-\int_{0}^{+\infty}(-z^{\frac{1-2s}{s}}w(z),\xi(z))_H \dz.
\end{align*}
Since this equality holds for all $\xi \in C^1_c((0,+\infty);H)$, we
have thereby shown that $-z^{\frac{1-2s}{s}}w=v''$
and $v\in
W^{2,2}_{\frac{1-s}{2s},\frac{1}{2},\frac{3s-1}{2s}}(H)$. Moreover, 
$W^{2,2}_{\frac{1-s}{2s},\frac{1}{2},\frac{3s-1}{2s}}(H)$ is a linear subspace of
$W^{2,2}_{loc}((0,+\infty);H)$. Thus, one also has that
\begin{displaymath}
  v\in C^{1}((0,+\infty);H)\cap W^{2,2}_{loc}((0,+\infty);H).
\end{displaymath}
In addition, since $v'\in
W^{1,2}_{\frac{1}{2},\frac{3s-1}{2s}}(H)$,
Lemma~\ref{lem:ibp} says that $v'(0):=\lim_{z\to 0+}v'(z)$ exists in
$H$. Thus, $v\in C^{1}([0,+\infty);H)$.

Further, by Lemma~\ref{lem:ibp}, for given $\xi_0 \in H$, there is a $\xi
\in W^{1,2}_{\frac{1-s}{2s},\frac{1}{2}}(H)$ satisfying %\WE^{1,2}_{\beta,0}$ such that
$\xi(0)=\xi_0$ in $H$. Suppose $v(0)+\xi_0\in D(\E_{2})$ (otherwise,
inequality~\eqref{eq:37} below always holds). Then, inserting $w=-z^{-\frac{1-2s}{s}}v''$ into~\eqref{eq:17}
and integrating by parts (Lemma~\ref{lem:ibp}) on the right hand side
of the same inequality yields that
\begin{align*}
        \int_{0}^{+\infty}(v'(z),\xi'(z))_H \,\dz &+ \tilde{j}(v(0)+\xi_{0}-\varphi)-\tilde{j}(v(0)-\varphi)\\
        &\ge -\int_{0}^{+\infty}(v''(z),\xi(z))_H \,\dz\\
        &= -(v'(0),\xi(0))_{H} + \int_{0}^{+\infty}(v'(z),\xi'(z))_{H}\,\dz\\
        &= (v'(0),\xi_0)_H + \int_{0}^{+\infty}(v'(z),\xi'(z))_{H}\,\dz,
\end{align*}
from where we obtain that
\begin{equation}
  \label{eq:37}
        \tilde{j}(v(0)+\xi_0-\varphi)-\tilde{j}(v(0)-\varphi) 
        \ge (v'(0),\xi_0)_H. 
\end{equation}
Since inequality~\eqref{eq:37} holds for arbitrary $\xi_{0}\in H$, we
have thereby shown that
$v'(0) \in \partial_{H} \tilde{j}(v(0)-a)$. This completes the
proof of this proposition.
\end{proof}

With these preliminaries in mind, we focus now on proving existence of solutions
of problem~\eqref{eq:2} ($2$nd part of Theorem~\ref{thm:2}). First, we give a briefly sketch
of the existence proof.\medskip
 
% Due to Lemma~\ref{lem:2}, it suffices to
% establish the existence of strong solutions of problem~\eqref{eq:2}.
Let $\varphi\in D(A)$. Then the strategy of proving existence
of solutions to~\eqref{eq:2} is lifting equation~\eqref{eq:80} in $H$
to the following abstract equation
  \begin{equation}\label{eq:19}
    \A_{loc}v-z^{-\frac{1-2s}{s}}v''\ni 0\qquad\text{in $L^{2}_{loc}(H)$,}
  \end{equation}
  where 
  \begin{displaymath}
    \A_{loc}:=\Big\{(v,w)\in L^{2}_{loc}(H)\times 
  L^{2}_{loc}(H)\;\Big\vert\; w(z)\in
  A(v(z)) \text{ for a.e. $z\ge 0$} \Big\}.
  \end{displaymath}
Existence of a solutions $v$ of~\eqref{eq:19} satisfying
\begin{equation}
  \label{eq:53}
  v'(0)\in \partial_{H}\tilde{j}(v(0)-\varphi)
\end{equation}
is shown in two steps: first, let
\begin{equation}
  \label{eq:38}
  \A:=\Big\{(v,w)\in L^{2}_{\underline{s}}(H)\times 
  L^{2}_{\underline{s}}(H)\;\Big\vert\; w(z)\in
  A(v(z)) \text{ for a.e. $z\ge 0$} \Big\}.
\end{equation}
Then, one shows that for every $\lambda$, $\delta>0$, the following
regularized equation
 \begin{equation}
    \label{eq:6}
    \A_{\lambda}v_{\lambda}+\delta v_{\lambda}+
    \partial_{L^{2}_{\underline{s}}(H)}\E(v_{\lambda})=
    0\qquad\text{in $L^{2}_{\underline{s}}(H)$}
  \end{equation}
  admits a (unique) solution $v_{\lambda}$. Here,
  $\A_{\lambda}:=
  \tfrac{1}{\lambda}(I_{L^{2}_{\underline{s}}(H)}-J_{\lambda}^{\A})$
  denotes the Yosida approximation of $\A$ in
  $L^{2}_{\underline{s}}(H)$. After establishing \emph{a
    priori} estimates on $(v_{\lambda})_{\lambda>0}$, one can conclude
  that for every $\delta>0$, there is a subsequence of
  $(v_{\lambda})_{\lambda>0}$ converging to a (unique) solution $v_{\delta}$ of
\begin{equation}
    \label{eq:6bis}
    \A v_{\delta}+\delta v_{\delta}+\partial_{L^{2}_{\underline{s}}(H)}\E(v_{\delta})\ni
    0\qquad\text{in $L^{2}_{\underline{s}}(H)$.}
\end{equation}
After establishing \emph{a priori} estimates on
$(v_{\delta})_{\delta\in (0,1]}$, one shows that there is a
subsequence of $(v_{\delta})_{\delta\in (0,1]}$ converging to a
solution $v$ of~\eqref{eq:19} satisfying~\eqref{eq:53}. This method
generalizes an idea by Brezis~\cite{MR0317123} to the general
fractional power case $0<s<1$.

\begin{proof}[Proof of Theorem~\ref{thm:2} (existence)]
  We begin by taking $\varphi\in D(A)$. By hypothesis, $A$ is a
  maximal monotone operator on $H$ satisfying
  $0_{H}\in \textrm{Rg}(A)$. For simplicity, we may assume without
  loss of generality that $0\in A0$, otherwise we replace $A$ by
  $\tilde{A}:=A(\cdot+y)$ for some $y\in A^{-1}(\{0\})$. Then, the
  corresponding operator $\A$ on $L^{2}_{\underline{s}}(H)$
  given by~\eqref{eq:38} is maximal monotone
  (cf~\cite{MR0348562}). Moreover, the Yosida approximation
  $\A_{\lambda}$ of $\A$ is a maximal monotone and Lipschitz
  continuous mapping on $L^{2}_{\underline{s}}(H)$. Since
  $\partial_{L^{2}_{\underline{s}}}\E$ is also maximal
  monotone operators on $L^{2}_{\underline{s}}(H)$,
  \cite[Lemme~2.4]{MR0348562} implies that for every $\lambda$ and
  $\delta>0$, problem~\eqref{eq:6}
  % \begin{equation}
  %   \label{eq:6}
  %   \A_{\lambda}v_{\lambda}+\delta v_{\lambda}+\partial_{\HE_{\beta}}\E(v_{\lambda})=
  %   0\qquad\text{in $\HE_{\beta}$}
  % \end{equation}
has a strong solution $v_{\lambda}\in L^{2}_{\underline{s}}(H)$. By
Proposition~\ref{prop:1},
\begin{displaymath}
  v_{\lambda}\in W^{2,2}_{\frac{1-s}{2s},\frac{1}{2},\frac{3s-1}{2s}}(H)\cap %C([0,+\infty);H)\cap
  C^{1}([0,+\infty);H).%\cap W^{2,2}_{loc}((0,+\infty);H)
\end{displaymath}
In addition, since the
Yosida approximation $A_{\lambda}$ of $A$ is Lipschitz continuous on
$H$, we can conclude from~\eqref{eq:6} that $v_{\lambda}\in C^{2}((0,+\infty);H)$.\medskip

\underline{1. \emph{A priori} estimates on $(v_{\lambda})_{\lambda>0}$.}
 The following estimates hold uniformly for all $\lambda>0$ and
 $0<\delta\le 1$:
 \begin{align}
   \label{eq:70}
   \norm{v_{\lambda}(z)}_{H}&\le
                                 \norm{v_{\lambda}(\hat{z})}_{H}\quad\text{for
                                 all $z\ge \hat{z}\ge 0$,}\\
      \label{eq:41}
   % \norm{z\,v'_{\lambda}}_{L^{2}_{\star}(H)}
   %                  &\le 
                      \frac{1}{\sqrt{2}}
                      \norm{v_{\lambda}(0)}_H&\le C,\\
     \label{eq:30}
  \norm{v'_{\lambda}(0)}_{H}^{\frac{1}{2}}&\le \Big(\norm{A^{0}\varphi}_{H}+
\delta\,\norm{\varphi}_{H}\Big)^{\frac{1}{2}} +\norm{\varphi}_H^{\frac{1}{2}},\\   \label{eq:14}
   \norm{z^{\frac{1}{2}}v'_{\lambda}}_{L^{2}_{\star}(H)} &\le \left(\Big(\norm{A^{0}\varphi}_{H}+
\delta\,\norm{\varphi}_{H}\Big)^{\frac{1}{2}} +\norm{\varphi}_{H}^{2}\right)\,\norm{\varphi}_{H}^{\frac{1}{2}},\\
\label{eq:31}
   \norm{v''_{\lambda}}_{L^{2}_{\overline{s}}(H)}
   &\le \Big(\norm{A^{0}\varphi}_{H}+
\delta\,\norm{\varphi}_{H}\Big) +\norm{\varphi}_H^{\frac{1}{2}}\,\Big(\norm{A^{0}\varphi}_{H}+
\delta\,\norm{\varphi}_{H}\Big)^{\frac{1}{2}},\\ \label{eq:11}
   \norm{\A_{\lambda}v_{\lambda}}_{L^{2}_{\underline{s}}(H)}&\le \Big(\norm{A^{0}\varphi}_{H}+
\delta\,\norm{\varphi}_{H}\Big) +\norm{\varphi}_H^{\frac{1}{2}}\,\Big(\norm{A^{0}\varphi}_{H}+
\delta\,\norm{\varphi}_{H}\Big)^{\frac{1}{2}},\\ % \label{eq:58}
%    \norm{v_{\lambda}}_{L^{2}_{\underline{s}}}
%    &\le \frac{1}{\delta} \Big(\norm{A^{0}\varphi}_{H}+
% \delta\,\norm{\varphi}_{H}\Big) +\norm{\varphi}_H^{\frac{1}{2}}\,\Big(\norm{A^{0}\varphi}_{H}+
% \delta\,\norm{\varphi}_{H}\Big)^{\frac{1}{2}},\\ 
   \label{eq:60}
     \norm{v'_{\lambda}(z)}_{H}&\le
                                 \norm{v'_{\lambda}(\hat{z})}_{H}\quad\text{for
                                 all $z\ge \hat{z}\ge 0$,}
 \end{align}
 where $C$ is a constant independent of $\lambda$. To show that these inequalities
 hold, we first multiply~\eqref{eq:6} by $v_{\lambda}$ with respect
 to the $L^{2}_{\underline{s}}(H)$-inner product~\eqref{eq:39}. Then,
  \begin{displaymath}
        (\A_{\lambda} v_{\lambda}, v_{\lambda})_{L^{2}_{\underline{s}}(H)} +
            \delta\,\norm{v_{\lambda}}^{2}_{L^{2}_{\underline{s}}(H)} + 
            (\partial_{L^{2}_{\underline{s}}(H)}\E(v_{\lambda}),
            v_{\lambda})_{L^{2}_{\underline{s}}(H)}=0.
   \end{displaymath}
   Since $\partial_{L^{2}_{\underline{s}}}\E(v_{\lambda}) =-z^{-\frac{1-2s}{s}}
   v''_{\lambda}$, the last equation is equivalent to
   \begin{equation}\label{eq:20}
          0 = (\A_{\lambda} v_{\lambda}, v_{\lambda})_{L^{2}_{\underline{s}}(H)} +
        \delta\,\norm{v_{\lambda}}^{2}_{L^{2}_{\underline{s}}(H)}
        -( z^{-\frac{1-2s}{s}} v''_{\lambda}, v_{\lambda})_{L^{2}_{\underline{s}}(H)}
   \end{equation}
   By Cauchy-Schwarz's inequality,
   \begin{displaymath}
        \abs{(z^{-\frac{1-2s}{s}} v''_{\lambda}, v_{\lambda})_{L^{2}_{\underline{s}}(H)}}\le
        \norm{z^{-\frac{1-2s}{s}}
          v''_{\lambda}}_{L^{2}_{\underline{s}}(H)}\,\norm{v_{\lambda}}_{L^{2}_{\underline{s}}(H)}=
        \norm{v''_{\lambda}}_{L^{2}_{\overline{s}}}\,\norm{v_{\lambda}}_{L^{2}_{\underline{s}}(H)},
   \end{displaymath}
   and since
   \begin{displaymath}
      %  \label{eq:12}
        (z^{-\frac{1-2s}{s}} v''_{\lambda},
      v_{\lambda})_{L^{2}_{\underline{s}}(H)} = 
      \int_{0}^{\infty}(v''_{\lambda}(z), v_{\lambda}(z))_{H}\,\dz=(v''_{\lambda},v_{\lambda})_{L^{2}(H)},
  \end{displaymath}
  one has that
  \begin{equation}
    \label{eq:21}
    (v''_{\lambda},v_{\lambda})_{H}\in L^{1}(\R_{+}).
  \end{equation}
  % Using again $\partial_{L^{2}_{\underline{s}}(H)}\E(v_{\lambda}) =-z^{-\frac{1-2s}{s}}
  % v''_{\lambda}$, one sees that
  On the other hand, \eqref{eq:6} is equivalent to
  \begin{equation}
    \label{eq:10}
    v''_{\lambda}(z) = z^{\frac{1-2s}{s}} A_{\lambda}v_{\lambda}(z) + z^{\frac{1-2s}{s}}\,\delta\,
    v_{\lambda}(z)\qquad\text{for every $z>0$.}
  \end{equation}
  Multiplying~\eqref{eq:10} by $v_{\lambda}$ with respect to
  the $H$-inner product applying the monotonicity of $A_{\lambda}$ and that $0\in
  A_{\lambda}0$, one sees that
\begin{equation}
  \label{eq:23}
  \begin{split}
    (v''_{\lambda}(z),v_{\lambda}(z))_{H} &=
    z^{\frac{1-2s}{s}} (A_{\lambda}v_{\lambda}(z),
    v_{\lambda}(z))_{H} +
    z^{\frac{1-2s}{s}}\,\delta\,\norm{v_{\lambda}(z)}^2_{H}\\
    &\ge
    z^{\frac{1-2s}{s}}\,\delta\,\norm{v_{\lambda}(z)}^2_{H}
  \end{split}
\end{equation}
for every $z>0$. Thus,
\begin{equation}
  \label{eq:22}
  \int_{z}^{\infty} (v''_{\lambda}(r),v_{\lambda}(r))_{H}\,\dr
  \ge \delta\,\int_{z}^{\infty}\norm{v_{\lambda}(r)}^2_{H}\,
  r^{\frac{1-2s}{s}}\,\dr\ge 0
\end{equation}
for every $z\ge 0$.
%  and so, 
% \begin{equation}
%   \label{eq:59}
%   \norm{v_{\lambda}}_{L^{2}_{\underline{s}}(H)}\le \frac{1}{\delta} \norm{v''_{\lambda}}_{L^{2}_{\overline{s}}}.
% \end{equation}
 % Further, by~\eqref{eq:20}, the monotonicity of
  % $\A_{\lambda}$ and by $0\in A_{\lambda}0$, % we can deduce
  % % from~\eqref{eq:12} that
  % \begin{equation}
  %       \label{eq:11}
  %       (v''_{\lambda}, v_{\lambda})_{\HE}  = (z^{-\frac{1-2s}{s}} v''_{\lambda},
  %     v_{\lambda})_{\HE_{\frac{1-2s}{s}}}= (\A_{\lambda} v_{\lambda}, v_{\lambda})_{\HE_{\beta}} +
  %           \delta\,\norm{v_{\lambda}}^2_{\HE_{\beta}} \ge 0.
  %           %\delta
  %           %\,\norm{v_{\lambda}}^2_{\HE_{\beta}}\ge 0.
  %     %   \begin{split}
  %     %      (v''_{\lambda}, v_{\lambda})_{\HE} & = (z^{-\frac{1-2s}{s}} v''_{\lambda},
  %     % v_{\lambda})_{\HE_{\beta}}\\
  %     %      & = (\A_{\lambda} v_{\lambda}, v_{\lambda})_{\HE_{\beta}} +
  %     %       \delta ||v_{\lambda}||^2_{\HE_{\beta}} \\
  %     %       &= (\A_{\lambda} v_{\lambda} - \A_{\lambda}0, v_{\lambda} - 0)_{\HE_{\beta}}
  %     %       + \delta\,\norm{v_{\lambda}}^2_{\HE_{\beta}} \ge 0.
  %     %   \end{split}
  % \end{equation}
 Further, since
 \begin{displaymath}
   \frac{\td}{\dz} (v'_{\lambda}(z),v_{\lambda}(z))_{H} =
   (v''_{\lambda}(z),v_{\lambda}(z))_{H} + \norm{v'_{\lambda}(z)}^2_{H}
 \end{displaymath}
 for every $z>0$ and since $v_{\lambda}\in D(\E)$ requires that
 $\norm{v'_{\lambda}}^2_{H}\in L^{1}(\R_{+})$, it follows from
 \eqref{eq:21} that the function
 $z\mapsto (v'_{\lambda}(z),v_{\lambda}(z))_{H}$ is continuous on
 $[0,+\infty)$ and by~\eqref{eq:22} that
  % Therefore, integrating the last equality over $(z,+\infty)$, $(z\ge 0)$,
  % and applying~\eqref{eq:22} yields
\begin{equation}
  \label{eq:13}
  - (v'_{\lambda}(z), v_{\lambda}(z))_{H} =
  \int_{z}^{+\infty} (v''_{\lambda}(r),v_{\lambda}(r))_{H}\, \dr +
  \int_{z}^{+\infty} \norm{v'_{\lambda}(r)}^2_{H}\, \dr\ge 0
 \end{equation}
for every $z\ge 0$. Therefore,
\begin{equation}
  \label{eq:35}% eq:15
  (v'_{\lambda}(z),v_{\lambda}(z))_{H}
  \le 0\qquad\text{for every $z\ge 0$}
\end{equation}
and since
\begin{displaymath}
  \frac{\td}{\dz}\frac{1}{2}\norm{v_{\lambda}(z)}_{H}^{2}=(v'_{\lambda}(z),v_{\lambda}(z))_{H},
\end{displaymath}
the function $z\mapsto \frac{1}{2}\norm{v_{\lambda}(z)}_{H}^{2}$ is
decreasing on $[0,+\infty)$, implying that~\eqref{eq:70} holds. Next,
by~\eqref{eq:23}, one has that
\begin{displaymath}
        \frac{\td^2}{\td z^2} \frac{1}{2} \norm{v_{\lambda}(z)}^2_{H}
        %&= \frac{\td}{\dz}(v'_{\lambda}(z),v_{\lambda}(z))_{H} \\
        = (v''_{\lambda}(z),v_{\lambda}(z))_{H} + \norm{v'_{\lambda}(z)}^2_{H} 
        %&=  z^{\frac{1-2s}{s}} (A_{\lambda}v_{\lambda}(z), v_{\lambda}(z))_{H} +  z^{\frac{1-2s}{s}}\,\delta\,\norm{v_{\lambda}(z)}^2_{H}
        %+ \norm{v'_{\lambda}(z)}^2_{H}\\
        \ge \norm{v'_{\lambda}(z)}^2_{H}\ge 0
\end{displaymath}
for all $z>0$.  Hence, the function $z \mapsto \norm{v_{\lambda}(z)}^2_{H}$
is convex on $[0,+\infty)$. Taking $z=0$ in~\eqref{eq:13} and
applying~\eqref{eq:22}, one finds
\begin{displaymath}
   (v'_{\lambda}(0), v_{\lambda}(0))_{H}
       +  \int_{0}^{+\infty} \norm{v'_{\lambda}(z)}^2_{H}\, \dz =
       -(v''_{\lambda}, v_{\lambda})_{L^{2}(H)} \le 0.
       % \int_{0}^{+\infty} (v''_{\lambda}(r),v_{\lambda}(r))_{H} \dr
       % \le 0.
\end{displaymath}
Therefore, and since
$v'_{\lambda}(0) \in \partial_{H} \tilde{j}(v_{\lambda}(0)-\varphi)$ and
$\partial_{H}\tilde{j}$ is monotone with
$0\in \partial_{H}\tilde{j}(0)$, we get that
\begin{align*}
  \norm{z^{\frac{1}{2}}v'_{\lambda}}^{2}_{L^{2}_{\star}(H)}
     & =\int_{0}^{+\infty} \norm{v'_{\lambda}(z)}^2_{H}\,\dz\\ 
    &\le - (v'_{\lambda}(0), v_{\lambda}(0))_{H} \\
    &= -(v'_{\lambda}(0)-0, (v_{\lambda}(0)-\varphi)-0)_{H} - (v'_{\lambda}(0),\varphi)_{H}\\
    &\le 0 - (v'_{\lambda}(0), \varphi)_{H}
\end{align*}
and so, by Cauchy-Schwarz's inequality, one obtains
\begin{equation}
  \label{eq:43}
     \norm{z^{\frac{1}{2}}v'_{\lambda}}_{L^{2}_{\star}(H)} \le 
     \norm{v'_{\lambda}(0)}_{H}^{\frac{1}{2}}\,\norm{\varphi}_H^{\frac{1}{2}}.
\end{equation}

By the Lipschitz continuity of $A_{\lambda} : H \to H$, the
 function
 \begin{displaymath}
   w_{\lambda}(z):=% z^{-\frac{1-2s}{s}} v''_{\lambda}
   A_{\lambda}v_{\lambda}(z) + \delta v_{\lambda}(z)
\end{displaymath}
is differentiable at almost everywhere $z\in \R_{+}$ with weak derivative
 \begin{displaymath}
   w'_{\lambda}(z) =\frac{\td}{\dz}A_{\lambda}(v_{\lambda}(z)) +
        \delta\, v'_{\lambda}(z)\quad\text{ for almost every $z\in \R_{+}$,}
 \end{displaymath}
where $\tfrac{\td}{\dz} A_{\lambda}(v_{\lambda})$ is the weak
 derivative of $z\mapsto A_{\lambda}(v_{\lambda}(z))$. On the other
 hand, by~\eqref{eq:10},
 \begin{displaymath}
   w_{\lambda}(z)=z^{-\frac{1-2s}{s}}v''_{\lambda}(z).
\end{displaymath}
Hence,
 \begin{equation}
   \label{eq:24}
   \frac{\td}{\dz}  (w_{\lambda}(z), v'_{\lambda}(z))_{H} =(w'_{\lambda}(z),
   v'_{\lambda}(z))_{H}+ \norm{z^{-\frac{3s-1}{2s}}v''_{\lambda}(z)}_{H}^{2}\frac{1}{z} 
 \end{equation}
 for almost every $z\in \R_{+}$. Since the Yosida approximation
 $A_{\lambda}$ is Lipschitz continuous with
 constant $1/\lambda$ (cf~\cite[Proposition~2.6]{MR0348562}), one has that
 \begin{displaymath}
   \lnorm{\frac{\td}{\dz} A_{\lambda}(v_{\lambda}(z))}_{H}\le \frac{1}{\lambda}\,\norm{v'_{\lambda}(z)}_{H}\qquad\text{for a.e.
 $z\in \R_{+}$.}
\end{displaymath}
Therefore and since
 $v_{\lambda}\in W^{2,2}_{\frac{1-s}{2s},\frac{1}{2},\frac{3s-1}{2s}}(H)$, 
\eqref{eq:24} means that the function
 $z\mapsto (w_{\lambda}(z), v'_{\lambda}(z))_{H}$ is absolutely
 continuous on $[0,+\infty)$ and
 \begin{equation}
\label{eq:28}
   -(w_{\lambda}(z), v'_{\lambda}(z))_{H}=\int_{z}^{+\infty} \frac{\td}{\dr}  (w_{\lambda}(r), v'_{\lambda}(r))_{H}\,\dr
 \end{equation}
 for every $z\ge 0$. Moreover, by the monotonicity of $A_{\lambda}$
 \begin{align*}
  & \left(\frac{\td}{\dz} A_{\lambda}(v_{\lambda}(z)),
   v'_{\lambda}(z)\right)_{H}\\
   &\quad=\lim_{h\to
     0}\left(\frac{A_{\lambda}(v_{\lambda}(z+h))-A_{\lambda}(v_{\lambda}(z))}{h}, 
     \frac{v_{\lambda}(z+h)-v_{\lambda}(z+h)}{h}\right)_{H} \ge 0
\end{align*}
for almost every $z\in \R_{+}$. Therefore, one has that
 \begin{equation}
   \label{eq:27}
   (w'_{\lambda}(z), v'_{\lambda}(z))_{H} = \left(\frac{\td}{\dz}A_{\lambda}(v_{\lambda}(z)),
        v'_{\lambda}(z)\right)_{H} + \delta\,\norm{v'_{\lambda}(z)}_{H}^2 \ge 0
 \end{equation}
for almost every $z\in \R_{+}$. Applying this inequality
to~\eqref{eq:24} and subsequently, inserting~\eqref{eq:24}
into~\eqref{eq:28} yields
\begin{displaymath}
  (w_{\lambda}(z), v'_{\lambda}(z))_{H}\le 0\qquad\text{for every $z\ge 0$.}
\end{displaymath}
From this, it follows that
\begin{displaymath}
  \frac{\td}{\dz}\frac{1}{2}\norm{v'_{\lambda}(z)}_{H}^{2} =
  (v''_{\lambda}(z),v'_{\lambda}(z))_{H}
  =z^{\frac{1-2s}{s}} (w_{\lambda}(z), v'_{\lambda}(z))_{H}\le 0,
\end{displaymath}
implying that $z\mapsto \norm{v'_{\lambda}(z)}_{H}^{2}$ is
non-increasing on $[0,+\infty)$ and, in particular, \eqref{eq:60}
holds.  Moreover, applying~\eqref{eq:27} to~\eqref{eq:24}, gives
\begin{displaymath}
  %\label{eq:29}
   \frac{\td}{\dz}  (w_{\lambda}(z), v'_{\lambda}(z))_{H} \ge
   \norm{v''_{\lambda}(z)}_{H}^{2} z^{-\frac{1-2s}{s}}\qquad
   \text{for a.e. $z\in \R_{+}$}
 \end{displaymath}
% \marginpar{\cdr{Maybe, we don't need this?}}
% \cdr{Therefore, for every $C\in \R$, the function
% \begin{displaymath}
%   %\label{eq:26}
%   \theta_{\lambda}(z):=\int_{0}^{z}(w_{\lambda}(r),
%   v'_{\lambda}(r))_{H}\,\dr+C,\qquad (z\ge0), 
% \end{displaymath}
% is convex and decreasing on $[0,+\infty)$ satisfying 
% \begin{displaymath}
%   \frac{\td^2}{\td z^{2}} \theta(z) \ge \norm{z^{\frac{3s-1}{s}} v''_{\lambda}(z)}_{H}^{2}\frac{1}{z}
% \end{displaymath}
% for almost every $z>0$.}
and by integrating this inequality over $\R_{+}$, one finds that 
\begin{displaymath}
  \norm{v''_{\lambda}}_{L^{2}_{\overline{s}^{\star}(H)}}^{2}
  \le  - (w_{\lambda}(0), v'_{\lambda}(0))_{H}. 
\end{displaymath}
Since $w_{\lambda}=A_{\lambda}v_{\lambda} + \delta
v_{\lambda}$ and by applying~\cite[Proposition~4.7(iii)]{MR0348562}) to
$u'_{\lambda}(0)\in \partial\tilde{j}(u_{\lambda}(0)-\varphi)$, one
sees that
\begin{align*}
       \norm{v''_{\lambda}}_{L^{2}_{\overline{s}^{\star}(H)}}^{2}&\le - (w_{\lambda}(0), v'_{\lambda}(0))_{H}\\
         &= -(A_{\lambda}v_{\lambda}(0)+\delta v_{\lambda}(0),v'_{\lambda}(0))_{H}\\
         &\le -(A_{\lambda}\varphi+\delta \varphi,
           v'_{\lambda}(0))_{H},
\end{align*}
and if $A^0$ is the \emph{minimal selection} of $A$, then
\begin{equation}
  \label{eq:25}
  \norm{v''_{\lambda}}_{L^{2}_{\overline{s}}}
\le \norm{v'_{\lambda}(0)}^{1/2}_{H} \,\Big(\norm{A^{0}\varphi}_{H}+
\delta\,\norm{\varphi}_{H}\Big)^{1/2}.
\end{equation}
%
% Here, there is no version with $\norm{\varphi-y}_{H}$
%

Next, given $x\in H$ satisfying $\norm{x}_{H}\le 1$. By
Lemma~\ref{lem:ibp}, there is a $\xi\in
W^{1,2}_{\frac{1-s}{2s},\frac{1}{2}}(H)$ such that
$\xi(0)=x$ in $H$. Then, by $v'\in
W^{1,2}_{\frac{1}{2},\frac{3s-1}{2s}}(H)$, Cauchy-Schwarz's
inequality gives
\begin{align*}
  (v'_{\lambda}(0),x)_{H}
  &=-\int_{0}^{\infty}\frac{\td}{\ds}(v'_{\lambda}(r),\xi(r))_{H}\dr\\
  &=-\int_{0}^{\infty}
    (v''_{\lambda}(r),\xi(r))_{H}\dr-\int_{0}^{\infty}
    (v'_{\lambda}(r),\xi'(r))_{H}\dr\\
  &\le \norm{v''_{\lambda}}_{L^{2}_{\overline{s}}}\,
    \norm{\xi}_{L^{2}_{\underline{s}}(H)}+
  \norm{z^{\frac{1}{2}}v'}_{L^{2}_{\star}(H)}\,\norm{z^{\frac{1}{2}}\xi'}_{L^{2}_{\star}(H)}\\
  &\le \left(\norm{v''_{\lambda}}_{L^{2}_{\overline{s}}} +
  \norm{z^{\frac{1}{2}}v'}_{L^{2}_{\star}(H)}\right)\,\norm{\xi}_{W^{1,2}_{\frac{1}{2},\frac{3s-1}{2s}}(H)}.
\end{align*}
Moreover, in the latter inequality, taking the infimum over all $\xi\in
W^{1,2}_{\frac{1-s}{2s},\frac{1}{2}}(H)$ satisfying
$\xi(0)=x$ and subsequently applying~\eqref{eq:42}, it follows that 
\begin{displaymath}
  (v'_{\lambda}(0),x)_{H}\le \left(\norm{v''_{\lambda}}_{L^{2}_{\overline{s}}} +
  \norm{z^{\frac{1}{2}}v'}_{L^{2}_{\star}(H)}\right)\,C\,\norm{x}_{H}.
\end{displaymath}
Now, taking the supremum over all $x\in H$ satisfying $\norm{x}_{H}\le
1$, yields
\begin{displaymath}
  \norm{v'_{\lambda}(0)}_{H}\le C\,\left(\norm{v''_{\lambda}}_{L^{2}_{\overline{s}}} +
  \norm{z^{\frac{1}{2}}v'}_{L^{2}_{\star}(H)}\right).
\end{displaymath}
Applying~\eqref{eq:25} and~\eqref{eq:43} to this inequality, one obtains
\begin{displaymath}
  \norm{v'_{\lambda}(0)}_{H}\le \left(\norm{v'_{\lambda}(0)}^{1/2}_{H} \,\Big(\norm{A^{0}\varphi}_{H}+
\delta\,\norm{\varphi}_{H}\Big)^{1/2} +\norm{v'_{\lambda}(0)}_{H}^{1/2}\,\norm{\varphi}_H^{1/2}\right),
\end{displaymath}
from where we can conclude that~\eqref{eq:30} holds. Now,
inserting~\eqref{eq:30} into~\eqref{eq:25}, one obtains~\eqref{eq:31},
and inserting~\eqref{eq:30} into~\eqref{eq:43}, one sees
that~\eqref{eq:14} holds. 

Next, by hypothesis, $\tilde{j}$ satisfies~\eqref{eq:47}, which is
equivalent to $(\partial_{\!  H}\tilde{j})^{-1}$ maps bounded sets
into bounded sets (cf~\cite[Proposition~2.14]{MR0348562}). Since each
$v_{\lambda}(0) \in (\partial_{\!
  H}\tilde{j})^{-1}(v'_{\lambda}(0))+\varphi$ and by~\eqref{eq:30},
the sequence $(v'_{\lambda}(0))_{\lambda>0}$ is bounded, we have that
there is a constant $C>0$ such that~\eqref{eq:41} holds. 
%  Moreover, inserting~\eqref{eq:31}
% into~\eqref{eq:59},then one arrives to~\eqref{eq:58}.
%\begin{equation}

%\end{equation}
% Next,
%     \begin{align*}
%         \frac{1}{2}\norm{v'_{\lambda}(0)}^2_H &= -\int_{0}^{+\infty}\frac{\td}{\dr}\frac{1}{2}
%         \norm{u'_{\lambda}(r)}^2_{H} \dr \\
%         &= -\int_{0}^{+\infty}(u''_{\lambda}, u'_{\lambda})\dt\\
%         &\leq ||u''_{\lambda}|| \cdot ||u'_{\lambda}|| \text{ C.S. inequality} \\
%         &\leq ||u'_{\lambda}(0)||^{\frac{1}{2}}_{H} (||A^0 a||+ ||\delta a||)^{\frac{1}{2}}||u'_{\lambda}||
%         \text{ by ~\eqref{eq:8bis}}\\
%         &\leq ||u'_{\lambda}(0)||_{H}||a||^{\frac{1}{2}}_{H}(||A^0 a||+ ||\delta a||)^{\frac{1}{2}}
%     \end{align*}
    % Then we can get the estimate:
    % \begin{equation}
    %     \label{eq:9}
    %     ||u'_{\lambda}(0)|| \leq 2||a||^{\frac{1}{2}}_{H}(||A^0 a||+ ||\delta a||)^{\frac{1}{2}}
    % \end{equation}
    % And insert this estimate into ~\eqref{eq:8bis},
    % \begin{equation}
    %     \label{eq:10}
    %     ||u''_{\lambda}||^2_{\HE} = \int_{0}^{+\infty}||u''_{\lambda}(t)||^2_{H} \dt
    %     \leq 2||a||^{\frac{1}{2}}_{H}(||A^0 a||+ ||\delta a||)^{\frac{3}{2}}
    % \end{equation}

To see that \emph{a priori} estimate~\eqref{eq:11} holds, we
multiply~\eqref{eq:6} by $\A_{\lambda}v_{\lambda}$ with respect to the
$L^{2}_{\underline{s}^{\star}}(H)$-inner product. Then, by the
monotonicity of $\A_{\lambda}$ and since
$\partial_{L^{2}_{\underline{s}}}\E(v_{\lambda})
=-z^{-\frac{1-2s}{s}} v''_{\lambda}$, one sees that
%  and
 % adding on both sides of the resulting equation
 % $\norm{\mathscr{A}_{\lambda}v_{\lambda}}^2_{L^{2}_{\underline{s}}(H)}$. Then,
    \begin{align*}
      \norm{\A_{\lambda}v_{\lambda}}^2_{L^{2}_{\underline{s}}(H)}
        &\le
          \norm{\A_{\lambda}v_{\lambda}}^2_{L^{2}_{\underline{s}}(H)}
          +\delta\,(v_{\lambda}, \A_{\lambda}v_{\lambda})_{L^{2}_{\underline{s}}(H)}\\ 
      &=
        (z^{-\frac{1-2s}{s}} v''_{\lambda},\A_{\lambda}v_{\lambda})_{L^{2}_{\underline{s}}(H)}\\
  % &= (z^{-\frac{1-2s}{s}} v''_{\lambda}, \A_{\lambda}v_{\lambda})_{L^{2}_{\underline{s}}(H)}\\
     %   &\le \norm{z^{-\frac{1-2s}{s}} v''_{\lambda}}_{L^{2}_{\underline{s}}(H)} 
      %    \,
      %    \norm{\A_{\lambda}v_{\lambda}}_{L^{2}_{\underline{s}}(H)}\\
      & \le \norm{v''_{\lambda}}_{L^{2}_{\overline{s}}(H)} 
          \, \norm{\A_{\lambda}v_{\lambda}}_{L^{2}_{\underline{s}}(H)}.
    \end{align*}
  Therefore,
  \begin{displaymath}
    \norm{\A_{\lambda}v_{\lambda}}\le \norm{v''_{\lambda}}_{L^{2}_{\overline{s}}(H)} 
  \end{displaymath}
  for all $\lambda>0$ and so, by~\eqref{eq:31}, one
  gets~\eqref{eq:11}. 
\medskip 

\underline{2. For every $\delta>0$, there is a unique solution $v_{\delta}$
  of~\eqref{eq:6bis} and $v_{\lambda}\to v_{\delta}$.} To establish
the existence of a solution $v_{\delta}$
  of~\eqref{eq:6bis}, we begin with the following convergence result.

 \begin{lemma}\label{lem:3}
    For every $\delta>0$, the sequence $(v_{\lambda})_{\lambda>0}$
    of solutions $v_{\lambda}$ of~\eqref{eq:6}, is a Cauchy sequence
    in $L^{2}_{\underline{s}}(H)$. In particular, there is a
    $v_{\delta}\in L^{2}_{\underline{s}}(H)$ such that
    \begin{equation}
      \label{eq:40}
      \lim_{\lambda\to 0+}v_{\lambda}=v_{\delta}\qquad\text{ in
       $L^{2}_{\underline{s}}(H)$.} 
   \end{equation}
 \end{lemma}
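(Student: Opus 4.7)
The plan is to show the Cauchy property by taking two parameters $\lambda,\mu>0$ with solutions $v_\lambda,v_\mu$ of~\eqref{eq:6}, subtracting the two equations, and testing with $v_\lambda-v_\mu$ in the $L^2_{\underline{s}}(H)$-inner product. This yields
\begin{displaymath}
  (\A_\lambda v_\lambda-\A_\mu v_\mu,v_\lambda-v_\mu)_{L^2_{\underline{s}}(H)}+\delta\,\norm{v_\lambda-v_\mu}_{L^2_{\underline{s}}(H)}^2+(\partial_{L^2_{\underline{s}}}\E(v_\lambda)-\partial_{L^2_{\underline{s}}}\E(v_\mu),v_\lambda-v_\mu)_{L^2_{\underline{s}}(H)}=0.
\end{displaymath}
The subdifferential term is nonnegative by monotonicity of $\partial_{L^2_{\underline{s}}}\E$ and may be dropped, so the key remaining task is to bound the Yosida term from below.

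For the Yosida term, I would use the standard identity $v_\lambda=J_\lambda^{\A}v_\lambda+\lambda\A_\lambda v_\lambda$ (and analogously for $\mu$) to write
\begin{displaymath}
  v_\lambda-v_\mu=(J_\lambda^{\A}v_\lambda-J_\mu^{\A}v_\mu)+(\lambda\A_\lambda v_\lambda-\mu\A_\mu v_\mu).
\end{displaymath}
Since $\A_\lambda v_\lambda\in\A J_\lambda^{\A}v_\lambda$ and $\A_\mu v_\mu\in\A J_\mu^{\A}v_\mu$, the monotonicity of $\A$ on $L^2_{\underline{s}}(H)$ ensures that the pairing of $\A_\lambda v_\lambda-\A_\mu v_\mu$ with $J_\lambda^{\A}v_\lambda-J_\mu^{\A}v_\mu$ is nonnegative. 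For the remaining pairing with $\lambda\A_\lambda v_\lambda-\mu\A_\mu v_\mu$, expanding and applying Cauchy--Schwarz gives
\begin{displaymath}
  (\A_\lambda v_\lambda-\A_\mu v_\mu,\lambda\A_\lambda v_\lambda-\mu\A_\mu v_\mu)_{L^2_{\underline{s}}(H)}\ge -(\lambda+\mu)\,\norm{\A_\lambda v_\lambda}_{L^2_{\underline{s}}(H)}\,\norm{\A_\mu v_\mu}_{L^2_{\underline{s}}(H)},
\end{displaymath}
where the positive quadratic terms $\lambda\norm{\A_\lambda v_\lambda}^2$ and $\mu\norm{\A_\mu v_\mu}^2$ are discarded.

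Combining these two estimates and invoking the \emph{a priori} bound~\eqref{eq:11}, which gives a constant $C=C(\varphi,\delta)>0$ with $\norm{\A_\lambda v_\lambda}_{L^2_{\underline{s}}(H)}\le C$ uniformly in $\lambda>0$, one concludes
\begin{displaymath}
  \delta\,\norm{v_\lambda-v_\mu}_{L^2_{\underline{s}}(H)}^2\le (\lambda+\mu)\,C^2.
\end{displaymath}
Hence $\norm{v_\lambda-v_\mu}_{L^2_{\underline{s}}(H)}\to0$ as $\lambda,\mu\to 0+$, so $(v_\lambda)_{\lambda>0}$ is Cauchy and converges to some $v_\delta\in L^2_{\underline{s}}(H)$, which is~\eqref{eq:40}. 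The main obstacle here is really just the correct handling of the Yosida term; the crucial input is the uniform $L^2_{\underline{s}}(H)$-bound on $\A_\lambda v_\lambda$ provided by~\eqref{eq:11}, and the $\delta>0$ coercivity which gives the explicit rate $\mathcal{O}(\sqrt{\lambda+\mu})$ and thus shows that uniqueness of $v_\delta$ is automatic.
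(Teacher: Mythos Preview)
Your proof is correct and follows essentially the same route as the paper: subtract the two equations, test with $v_\lambda-v_\mu$, drop the monotone $\partial_{L^2_{\underline{s}}}\E$ term, decompose $v_\lambda-v_\mu$ via the resolvent identity, use monotonicity of $\A$ on the $J_\lambda^{\A}v_\lambda-J_\mu^{\A}v_\mu$ part, and bound the remaining Yosida cross-term using the uniform estimate~\eqref{eq:11}. The only cosmetic difference is that the paper applies Young's inequality to the cross-term to obtain the constant $(\lambda+\hat\lambda)/4$ instead of your $(\lambda+\mu)$ from Cauchy--Schwarz, but the argument and the conclusion are identical.
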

    
  \begin{proof}[Proof of Lemma~\ref{lem:3}]
    For $\lambda$, $\hat{\lambda} > 0$, let $v_{\lambda}$ and
    $v_{\hat{\lambda}}$ be two solutions of~\eqref{eq:6}. Then,
    multiplying 
    \begin{displaymath}
        \delta
        (v_{\lambda}-v_{\hat{\lambda}})=-(\A_{\lambda}v_{\lambda}-\A_{\hat{\lambda}}v_{\hat{\lambda}})-
        (\partial_{L^{2}_{\underline{s}}}\E(v_{\lambda})-\partial_{L^{2}_{\underline{s}}}\E(v_{\hat\lambda}))
    \end{displaymath}
    by $v_{\lambda}-v_{\hat\lambda}$ with respect to the
    $L^{2}_{\underline{s}}(H)$-inner product and using that $\partial_{L^{2}_{\underline{s}}}\E$ is monotone,
    shows that
    \begin{displaymath}
      \delta\,\norm{v_{\lambda}-v_{\hat{\lambda}}}^2_{L^{2}_{\underline{s}}(H)} \le -(\A_{\lambda}v_{\lambda}
        -\A_{\hat{\lambda}}v_{\hat{\lambda}}, v_{\lambda}-v_{\hat{\lambda}})_{L^{2}_{\underline{s}}(H)}.
    \end{displaymath}
    We recall from~\cite[p~28]{MR0348562} that for the resolvent operator $J^{\A}_{\lambda}$ of
    $\A$, one has that $\A_{\lambda}u\in \A J^{\A}_{\lambda}u$. Thus,
    by the monotonicity of $\A$, one has that
    \begin{align*}
        \delta\,\norm{v_{\lambda}-v_{\hat{\lambda}}}^2_{L^{2}_{\underline{s}}(H)} &\le -(\A_{\lambda}v_{\lambda}
        -\A_{\hat{\lambda}}v_{\hat{\lambda}}, v_{\lambda}-v_{\hat{\lambda}})_{L^{2}_{\underline{s}}(H)}\\
        &= -(\A_{\lambda}v_{\lambda}-\A_{\hat{\lambda}}v_{\hat\lambda},
        (v_{\lambda}-J^{\A}_{\lambda}v_{\lambda})-(v_{\hat\lambda}-
        J^{\A}_{\hat\lambda}v_{\hat\lambda}))_{L^{2}_{\underline{s}}(H)}\\ 
       &\hspace{3cm}- (\A_{\lambda}v_{\lambda}-\A_{\hat\lambda}v_{\hat\lambda},
        J^{\A}_{\lambda}v_{\lambda}-J_{\hat\lambda}^{\A}v_{\hat\lambda})_{L^{2}_{\underline{s}}(H)}\\
        &\le -(\A_{\lambda}v_{\lambda}-\A_{\hat\lambda}v_{\hat\lambda},
        \lambda\,\A_{\lambda}v_{\lambda}-\hat{\lambda}\,
          \A_{\hat{\lambda}}v_{\hat{\lambda}})_{L^{2}_{\underline{s}}(H)}\\
        &= -\lambda\,\norm{\A_{\lambda}v_{\lambda}}^2_{L^{2}_{\underline{s}}(H)} +(\lambda+\hat{\lambda})\,
        (\A_{\lambda}u_{\lambda},\A_{\hat\lambda}v_{\hat\lambda})_{L^{2}_{\underline{s}}(H)}\\
        &\hspace{6cm}- \hat{\lambda}\,\norm{\A_{\hat\lambda}v_{\hat\lambda}}^2_{L^{2}_{\underline{s}}(H)}.
    \end{align*}
    By Cauchy-Schwarz's and Young's inequality,
    \begin{align*}
        &(\lambda+\hat{\lambda})\,(\A_{\lambda}v_{\lambda},\A_{\hat{\lambda}}v_{\hat{\lambda}})_{L^{2}_{\underline{s}}(H)}\\
        &\quad\le
          (\lambda+\hat{\lambda})\,\norm{\A_{\lambda}v_{\lambda}}_{L^{2}_{\underline{s}}(H)}\,
        \norm{\A_{\hat{\lambda}}v_{\hat{\lambda}}}_{L^{2}_{\underline{s}}(H)}\\
        &\quad\le \lambda\,
          \norm{\A_{\lambda}v_{\lambda}}_{L^{2}_{\underline{s}}(H)}^{2}+
          \frac{\lambda}{4}
          \norm{\A_{\hat{\lambda}}v_{\hat{\lambda}}}_{L^{2}_{\underline{s}}(H)}^{2}+
          \hat{\lambda}\,
          \norm{\A_{\hat{\lambda}}v_{\hat{\lambda}}}_{L^{2}_{\underline{s}}(H)}^{2}+
          \frac{\hat{\lambda}}{4} \norm{\A_{\lambda}v_{\lambda}}_{L^{2}_{\underline{s}}(H)}^{2}.
    \end{align*}
    Hence,
    \begin{displaymath}
        \delta\,\norm{v_{\lambda}-v_{\hat{\lambda}}}^2_{L^{2}_{\underline{s}}(H)}
      \le  \frac{\lambda}{4}
          \norm{\A_{\hat{\lambda}}v_{\hat{\lambda}}}_{L^{2}_{\underline{s}}(H)}^{2}
          +\frac{\hat{\lambda}}{4} \norm{\A_{\lambda}v_{\lambda}}_{L^{2}_{\underline{s}}(H)}^{2},
     \end{displaymath}    
     which by~\eqref{eq:11} shows that
     \begin{displaymath}
       \delta\,\norm{v_{\lambda}-v_{\hat{\lambda}}}^2_{L^{2}_{\underline{s}}(H)} 
       \le \frac{\lambda+\hat{\lambda}}{4}\Bigg[\Big(\norm{A^{0}\varphi}_{H}+
       \delta\,\norm{\varphi}_{H}\Big) +\norm{\varphi}_H^{\frac{1}{2}}\,\Big(\norm{A^{0}\varphi}_{H}+
       \delta\,\norm{\varphi}_{H}\Big)^{\frac{1}{2}}\Bigg]^{2}. 
     \end{displaymath}
    Therefore, for every $\delta>0$,
    $(v_{\lambda})_{\lambda>0}$ is a Cauchy sequence
    in~$L^{2}_{\underline{s}}(H)$. This proves the claim of
    this lemma.
  \end{proof}

  \noindent\emph{Continuation of the proof of Theorem~\ref{thm:1bis}. }
   By Lemma~\ref{lem:3}, there is a $v_{\delta}\in
  L^{2}_{\underline{s}}(H)$ such that~\eqref{eq:40}
  holds. Now, the \emph{a priori} estimates~\eqref{eq:14}
  and~\eqref{eq:31} imply that $v_{\delta}\in
  W^{2,2}_{\frac{1-s}{2s},\frac{1}{2},\frac{3s-1}{2s}}(H)$
  and after possibly passing to a subsequence of
  $(v_{\lambda})_{\lambda>0}$, which we denote again by
  $(v_{\lambda})_{\lambda>0}$, one has that
  \begin{align}
    \label{eq:44}
    &\lim_{\lambda\to0+}v'_{\lambda}=v'_{\delta}\qquad\text{weakly in
      $L^{2}(H)$,}\\ \label{eq:45}
    &\lim_{\lambda\to0+}v''_{\lambda}=v''_{\delta}\qquad\text{weakly in
      $L^{2}_{\overline{s}}(H)$.}
  \end{align}
  Moreover, since
  $\partial_{L^{2}_{\underline{s}}}\E(v_{\lambda})
  =-z^{-\frac{1-2s}{s}} v''_{\lambda}$ and
  by~\cite[Proposition~2.5]{MR0348562}, the limits~\eqref{eq:40}
  and~\eqref{eq:45} imply that
  $v_{\delta}\in D(\partial_{L^{2}_{\underline{s}}}\E)$ and
  $-z^{-\frac{1-2s}{s}}
  v''_{\delta}=\partial_{L^{2}_{\underline{s}}}\E(v_{\delta})$. Next,
  by~\eqref{eq:11}, there is a
  $\chi\in L^{2}_{\underline{s}}(H)$ and a subsequence of
  $(v_{\lambda})_{\lambda>0}$, which we denote again by
  $(v_{\lambda})_{\lambda>0}$ such that
  \begin{equation}
    \label{eq:46}
    \lim_{\lambda\to0+}\A_{\lambda}v_{\lambda}=\chi\qquad\text{weakly
      in $L^{2}_{\underline{s}}(H)$.}
  \end{equation}
  Moreover,
  \begin{align*}
    \lim_{\lambda\to 0} (\A_{\lambda}v_{\lambda},v_{\lambda})_{L^{2}_{\underline{s}}(H)}
        &= \lim_{\lambda\to 0} (\A_{\lambda}v_{\lambda},v_{\lambda}
        -v_{\delta})_{L^{2}_{\underline{s}}(H)} 
          + \lim_{\lambda \to 0}(\A_{\lambda}v_{\lambda},v_{\delta})_{ L^{2}_{\underline{s}}(H)}\\
        &= (\chi,v_{\delta})_{ L^{2}_{\underline{s}}(H)}
  \end{align*}
 Note, $J_{\lambda}^{\A}v_{\delta}\to v_{\delta}$ in
  $L^{2}_{\underline{s}}(H)$ as $\lambda\to 0+$. Thus, and since
  \begin{align*}
    \norm{J_{\lambda}^{\A}v_{\lambda}-v_{\delta}}_{L^{2}_{\underline{s}}(H)}
      &\le
        \norm{J_{\lambda}^{\A}v_{\lambda}-J_{\lambda}^{\A}v_{\delta}}_{L^{2}_{\underline{s}}(H)}
       +
        \norm{J_{\lambda}^{\A}v_{\delta}-v_{\delta}}_{L^{2}_{\underline{s}}(H)}\\
       &\le \norm{v_{\lambda}-v_{\delta}}_{L^{2}_{\underline{s}}(H)}+
         \norm{J_{\lambda}^{\A}v_{\delta}-v_{\delta}}_{L^{2}_{\underline{s}}(H)},
  \end{align*}
 one has that
 \begin{displaymath}
   \lim_{\lambda\to0+} J^{\A}_{\lambda}v_{\lambda}=v_{\delta}\qquad\text{
      in $L^{2}_{\underline{s}}(H)$.}
 \end{displaymath}
 Therefore and since
 $\A_{\lambda}v_{\lambda}\in \A J_{\lambda}^{\A}v_{\lambda}$,
 \cite[Proposition~2.5]{MR0348562} implies that $v_{\delta}\in D(\A)$
 and $\chi\in \A v_{\delta}$.  Now, by~\eqref{eq:40},~\eqref{eq:45},
 and~\eqref{eq:46}, taking the
 $L^{2}_{\underline{s}}(H)$-weak limit in~\eqref{eq:6}
 yields that for every $\delta>0$, $v_{\delta}$ is a solution
 of~\eqref{eq:6bis}, which by Proposition~\ref{prop:1} has the regularity
\begin{displaymath}
  v_{\delta}\in W^{2,2}_{\frac{1-s}{2s},\frac{1}{2},\frac{3s-1}{2s}}(H)\cap
  C^{1}([0,+\infty);H)\cap W^{2,2}_{loc}((0,+\infty);H)
\end{displaymath}

% We note that, since $\hat{A}:=A+\delta I_{H}$ is a maximal monotone
% operator on $H$ satisfying $0\in \hat{A}$ and if $A$ is $\partial
% j$-monotone, then also $\hat{A}$ is $\partial j$-monotone, it follows
% from the uniqueness proof of Theorem~\ref{thm:2} that
% problem~\eqref{eq:6bis} has at most one solution. 

Now, let $x\in H$ and for $\rho\in C^{\infty}([0,+\infty))$ satisfying
$0\le \rho\le 1$ on $[0,+\infty)$, $\rho\equiv 1$ on $[0,1]$ and
$\rho\equiv 0$ on $[2,+\infty)$, set $\xi(z)=\rho(z)\,x$ for every
$z\ge 0$. By~\eqref{eq:40}, ~\eqref{eq:70} and~\eqref{eq:41}, one has
that
 \begin{equation}
   \label{eq:73}
   \lim_{\lambda\to0+}v_{\lambda}=v_{\delta}\qquad\text{in $L^{2}(0,T;H)$, for
     every $T>0$.}
\end{equation}
Thus, by~\eqref{eq:44} and since $v_{\delta}$  and $v$
belong to $C^{1}([0,+\infty);H)$, one has that
 \begin{align*}
   (v_{\lambda}(0),x)_{H} &= 
      -\int_{0}^{+\infty}\frac{\td}{\dz}(v_{\delta}(z),\xi(z))_{H}\,\dz\\
    &= - \int_{0}^{+\infty}(v'_{\delta}(z),\xi(z))_{H}\,\dz  -
      \int_{0}^{+\infty}(v_{\delta}(z),\xi'(z))_{H}\,\dz\\
   &\to - \int_{0}^{+\infty}(v'(z),\xi(z))_{H}\,\dz  -
      \int_{0}^{+\infty}(v(z),\xi'(z))_{H}\,\dz%\\
   = (v(0),x)_{H} 
 \end{align*}
 as $\lambda\to0+$. Since $x\in H$ was arbitrary, this means that
 $v_{\lambda}(0)\rightharpoonup v_{\delta}(0)$ weakly in $H$ as
 $\lambda\to0+$ and hence,
\begin{displaymath}
  %\label{eq:71}
  v_{\lambda}(z)=v_{\lambda}(0)+\int_{0}^{z}v'_{\lambda}(r)\,\dr
   \rightharpoonup v_{\delta}(0)+\int_{0}^{z}v'_{\delta}(r)\,\dr =v_{\delta}(z)
 %     as $\delta\to0+$}
\end{displaymath}
weakly in $H$ as $\lambda\to0+$ for every $z>0$. In addition,
by~\eqref{eq:73} and~\eqref{eq:44}, %here, one uses the second
%estimate in eq:44
one has that
\begin{align*}
  \frac{1}{2}\norm{v_{\lambda}(0)}_{H}^{2}
  &=-\int_{0}^{2}\frac{\td}{\dz}\frac{1}{2}\norm{v_{\lambda}(z)\rho(z)}_{H}^{2}\,\dz\\
  &=-\int_{0}^{2}(v'_{\lambda}(z),v_{\lambda}(z))_{H}\,
    \rho^{2}(z)\,\dz-\int_{0}^{2}\norm{v_{\lambda}(z)}_{H}^{2}\,\rho'(z)\rho(z)\,\dz\\
  &\to -\int_{0}^{2}(v'_{\delta}(z),v_{\delta}(z))_{H}\,
    \rho^{2}(z)\,\dz-\int_{0}^{2}\norm{v_{\delta}(z)}_{H}^{2}\,\rho'(z)\rho(z)\,\dz\\
  &=\frac{1}{2}\norm{v_{\delta}(0)}_{H}^{2}
\end{align*}
as $\lambda\to 0+$. Therefore, by the weak limit
$v_{\lambda}(0)\rightharpoonup v_{\delta}(0)$ in and since $H$ is a
Hilbert space, we have that
\begin{displaymath}
  \lim_{\lambda\to0+}v_{\lambda}(0)=v_{\delta}(0)\qquad\text{ in $H$.}
\end{displaymath}
By this,~\eqref{eq:73}, and~\eqref{eq:44}, one has that
\begin{align*}
  \frac{1}{2}\norm{v_{\lambda}(z)}_{H}^{2}
  &=\frac{1}{2}\norm{v_{\lambda}(0)}_{H}^{2}+\int_{0}^{z}\frac{\td}{\dr}\frac{1}{2}\norm{v_{\lambda}(r)}_{H}^{2}\,\dr\\
  &=\frac{1}{2}\norm{v_{\lambda}(0)}_{H}^{2}+\int_{0}^{z}(v'_{\lambda}(r),v_{\lambda}(r))_{H}\,\dr\\
  &\to
    \frac{1}{2}\norm{v_{\lambda}(0)}_{H}^{2}+\int_{0}^{z}(v'_{\lambda}(r),v_{\lambda}(r))_{H}\,\dr
    =\frac{1}{2}\norm{v_{\delta}(z)}_{H}^{2}.
\end{align*}
Therefore, and since $H$ is a Hilbert space, we have that
\begin{equation}
  \label{eq:72}
  \lim_{\lambda\to0+}v_{\lambda}(z)=v_{\delta}(z)\qquad\text{ in $H$
    for every $z\ge 0$.}
\end{equation}
\medskip

 \underline{3. \emph{A priori}-estimates on
   $(v_{\delta})_{\delta>0}$. } One has that the following
 inequalities hold for all $0<\delta\le 1$:
\allowdisplaybreaks
 \begin{align}
       \label{eq:41bis}
    \norm{z\,v'_{\delta}}_{L^{2}_{\star}(H)}
                     &\le  \frac{\norm{v_{\delta}(0)}^{2}_{H}}{2}\\ \label{eq:49}
   \norm{v_{\delta}(z)}_H&\le \norm{v_{\delta}(\hat{z})}_{H}\le \norm{v_{\delta}(0)}_{H}\le
                           C\quad(\text{for all $z\ge \hat{z}\ge 0$,})\\
      \label{eq:30bis}
   \norm{v'_{\delta}(0)}_{H}^{\frac{1}{2}}&\le \Big(\norm{A^{0}\varphi}_{H}+
    %\delta\,
\norm{\varphi}_{H}\Big)^{\frac{1}{2}} +\norm{\varphi}_H^{\frac{1}{2}},\\ 
 \label{eq:14bis}
   \norm{z^{\frac{1}{2}}v'_{\delta}}_{L^{2}_{\star}(H)} &\le \left(\Big(\norm{A^{0}\varphi}_{H}+
\delta\,\norm{\varphi}_{H}\Big)^{\frac{1}{2}} +\norm{\varphi}_{H}^{\frac{1}{2}}\right)\,\norm{\varphi}_{H}^{\frac{1}{2}},\\
\label{eq:31bis}
   \norm{v''_{\delta}}_{L^{2}_{\overline{s}}}
   &\le \Big(\norm{A^{0}\varphi}_{H}+
\delta\,\norm{\varphi}_{H}\Big) +\norm{\varphi}_H^{\frac{1}{2}}\,\Big(\norm{A^{0}\varphi}_{H}+
\delta\,\norm{\varphi}_{H}\Big)^{\frac{1}{2}},\\ \label{eq:57} 
   \norm{v'_{\delta}(z)}_{H}&\le
     \norm{v'_{\delta}(\hat{z})}_{H}\quad\text{ for all $z\ge
     \hat{z}>0$,}\\
\label{eq:68}
   &(v'_{\delta}(z),v_{\delta}(z))_{H}\le 0\qquad\text{for all $z\ge 0$.}
 \end{align} 
 where the constant $C$ is independent of $\delta$. 

 Due to the limits~\eqref{eq:45}, and~\eqref{eq:46}, sending
 $\lambda\to0+$ in~% \eqref{eq:41},
 \eqref{eq:14} and~\eqref{eq:31} shows that~\eqref{eq:14bis}
 and~\eqref{eq:31bis} hold. Next, inequality~\eqref{eq:57} follows
 from~\eqref{eq:60} and by the limit
\begin{equation}
  \label{eq:69}
  \lim_{\lambda\to0+}v'_{\lambda}(z)=v'_{\delta}(z)\qquad\text{
    strongly in $H$ for every $z>0$.}
\end{equation}
The convergence~\eqref{eq:69} follows by the same reasoning as shown
to prove~\eqref{eq:64} in Lemma~\ref{lem:4} below. To avoid repetitive
arguments, we outline this method only once.  By the two limits~\eqref{eq:69}
and~\eqref{eq:72}, sending $\lambda\to 0+$ in~\eqref{eq:35}
yields~\eqref{eq:68} for all $z>0$ and by continuity of $v_{\delta}$
and $v'_{\delta}$, one has that~\eqref{eq:68} also holds for $z=0$. 
To see that~\eqref{eq:41bis} holds, we note that by~\eqref{eq:6bis}
and since $A$ is monotone,
\begin{displaymath}
  \int_{0}^{T}\,z\, (v''_{\delta}(z),v_{\delta}(z))_{H}\,\dz \ge 0
\end{displaymath}
for every $T>0$. By this estimate, one sees that
\allowdisplaybreaks
\begin{align*}
        T(v'_{\delta}(T),v_{\delta}(T))_{H} 
         &=\int_{0}^{T}\frac{\td}{\dz}\Big(
                                                z\,(v'_{\delta}(z),v_{\delta}(z))_{H}\Big)\, \dz \\
        &= \int_{0}^{T}(v'_{\delta}(z),v_{\delta}(z))_{H}\, \dz + \int_{0}^{T}
        z\,(v''_{\delta}(z),v_{\delta}(z))_{H}\,\dz\\
         &\hspace{3cm} + \int_{0}^{T} z\,\norm{v'_{\delta}(z)}^2_H \,\dz\\
        &\ge
           \int_{0}^{T}\frac{\td}{\dz}\frac{1}{2}\norm{v_{\delta}(z)}^2_{H}\,\dz
           + \int_{0}^{T} z\,\norm{v'_{\delta}(z)}^2_H \,\dz\\
         &= \frac{1}{2} \norm{v_{\delta}(T)}^2_H - \frac{1}{2} \norm{v_{\delta}(0)}^2_H +
         \int_{0}^{T}z\norm{v'_{\delta}(z)}^2_H \,\dz\\
        & \ge - \frac{1}{2} \norm{v_{\delta}(0)}^2_H +
         \int_{0}^{T}z\norm{v'_{\delta}(z)}^2_H \,\dz
\end{align*}
Rearranging this inequality and applying by~\eqref{eq:68}, one gets
\begin{displaymath}
    \int_{0}^{T}z\norm{v'_{\delta}(z)}^2_H \,\dz \le
        T(v'_{\delta}(T),v_{\delta}(T))_{H}+\frac{1}{2}
                 \norm{v_{\delta}(0)}^2_H
                    \le  \frac{1}{2}
                 \norm{v_{\delta}(0)}^2_H.
 \end{displaymath}
Sending $T\to +\infty$ in this estimate shows that~\eqref{eq:41bis} holds.

% By~\eqref{eq:41}, we can extract another
 % subsequence of $(v_{\lambda})_{\lambda>0}$ such that
 % $z v'_{\lambda}\rightharpoonup z v'_{\delta}$ weakly in $L^{2}_{\star}(H)$ as
 % $\lambda\to0+$. Thus and by~\eqref{eq:72}, sending $\lambda\to 0+$ in~\eqref{eq:41}, it
 % follows that~\eqref{eq:41bis} holds
 By~\eqref{eq:30}, we can extract another subsequence of
 $(v_{\lambda})_{\lambda>0}$ such that
 $v'_{\lambda}(0)\rightharpoonup v'_{\delta}(0)$ in $H$ and so,
 sending $\lambda\to 0+$ in~\eqref{eq:30}, one finds
 that~\eqref{eq:30bis} holds. By~\eqref{eq:30bis} and since
 $v_{\delta}(0)\in (\partial_{\!
   H}\tilde{j})^{-1}(v'_{\delta}(0))+\varphi$, we can conclude that
 the sequence $(v_{\delta}(0))_{\delta>0}$ is bounded in $H$, showing
 that the right hand side inequality in~\eqref{eq:49}
holds. 
% \begin{equation}
 %  \label{eq:50}
 %  (v_{\delta}(0))_{\delta>0}\qquad\text{ is bounded in $H$.}
%\end{equation}
Since $-z^{-\frac{1-2s}{s}}
   v''_{\delta}=\partial_{L^{2}_{\underline{s}}}\E(v_{\delta})$
   and $A$ is monotone with $0\in A0$, 
   multiplying~\eqref{eq:6bis} by $v_{\delta}(z)$ gives
   \begin{displaymath}
     z^{-\frac{1-2s}{s}}(v''_{\delta}(z),v_{\delta}(z))_{H}=\delta\,
     \norm{v_{\delta}(z)}_{H}^{2}+(w_{\delta}(z),v_{\delta}(z))_{H}\ge 0
   \end{displaymath}
   for almost every $z>0$, where $w_{\delta}(z)\in Av_{\delta}(z)$ satisfies
   $w_{\delta}(z)+\delta v_{\delta}(z)=z^{-\frac{1-2s}{s}}
   v''_{\delta}(z)$. Therefore,
   \begin{equation}
     \label{eq:51}
     (v''_{\delta}(z),v_{\delta}(z))_{H}\ge 0\qquad\text{ for almost every
   $z>0$.}
   \end{equation}
   Since $v_{\delta}\in
   W^{2,2}_{\frac{1-s}{2s},\frac{1}{2},\frac{3s-1}{2s}}(H)$, 
% the functions
% $r\mapsto (v''_\delta(r),v_\delta(r))_H$ and $r\mapsto
% \norm{v'_\delta(r)}_H^2$ belong to $L^{1}(\R_{+})$. Thus, by
the integration by parts rule in Lemma~\ref{lem:ibp} yields that 
\begin{align*}
\frac{\td}{\dz}\frac{1}{2}\norm{v_\delta(z)}_H^2
&= (v'_\delta(z),v_\delta(z))_H\\
&=-\int_{z}^{+\infty}\frac{\td}{\td r}(v'_\delta(r),v_\delta(r))_H \,\dr\\
& = -\int_{z}^{+\infty}(v''_\delta(r),v_\delta(r))_H \,\ds - \int_{z}^{+\infty}\norm{v'_\delta(r)}_H^2 \,\dr
\end{align*}
for every $z\ge 0$. Thus and by~\eqref{eq:51}, $z\mapsto
\norm{v_\delta(z)}_H^2$ is decreasing on $[0,+\infty)$ and, in
particular, the first two inequalities in~\eqref{eq:49} hold. 
% \begin{displaymath}
%   \norm{v_{\delta}(z)}_{H}\le \norm{v_{\delta}(\hat{})}_{H}\le \norm{v_{\delta}(0)}_{H}\qquad\text{for
%     all $z\ge 0$.} 
% \end{displaymath}
% From this together with~\eqref{eq:50}, it follows that the third
% inequality in~\eqref{eq:49}
% holds. 
\medskip

\underline{4. There is a function $v\in C^{1}([0,+\infty);H)$ such that
  $v'_{\delta}\to v'$ as $\delta\to0+$.} 
% For the proof of existence of a solution $v$
% of~\eqref{eq:2}, 
We need the following convergence results.

 \begin{lemma}\label{lem:4}
    % Suppose, the sequence $(v_{\delta})_{\delta>0}$
    % satisfies~\eqref{eq:49}, \eqref{eq:14bis},~\eqref{eq:31bis}
    % and~\eqref{eq:57}. Then, 
    There is a function
    $v\in C^{1}([0,+\infty);H)\cap L^{\infty}(H)$ with $v'\in
    W^{1,2}_{\frac{1}{2},\frac{3s-1}{2s}}(H)$ such that
    after passing to a subsequence, one has that
    \begin{align}
        \label{eq:5}
      &\lim_{\delta\to 0+}v_{\delta}(z)=v(z) \qquad\text{weakly $H$ for
     every $z\ge 0$,}\\
      \label{eq:61}
      &\lim_{\delta\to 0+}v_{\delta}=v\qquad\text{weakly in
       $L^{2}_{loc}(H)$,}\\
      \label{eq:54}
      &\lim_{\delta\to 0+}v'_{\delta}=v'\qquad\text{strongly in
       $L^{2}(H)$,}\\ \label{eq:4}
      & \lim_{\delta\to 0+}v'_{\delta}(0)=v'(0)\qquad\text{weakly in $H$,}\\
      \label{eq:64}
      & \lim_{\delta\to 0+}v'_{\delta}(z)=v'(z)\qquad\text{strongly in
       $H$ for every $z>0$, and}\\
      \label{eq:62}
      &\lim_{\delta\to 0+}v''_{\delta}=v''\qquad\text{weakly in
       $L^{2}_{\overline{s}^{\star}}(H)$.}
   \end{align}
 \end{lemma}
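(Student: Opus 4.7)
The plan is to combine weak compactness from the \emph{a priori} bounds of Step~3 with a monotonicity--based norm-convergence argument for $v'_\delta$. The bounds \eqref{eq:49}, \eqref{eq:30bis}, \eqref{eq:14bis}, \eqref{eq:31bis}, and \eqref{eq:57} show that the sequences $(v_\delta(0))$ and $(v'_\delta(0))$ are bounded in $H$, that $(v'_\delta)$ is bounded in $L^{2}(H)$, and that $(v''_\delta)$ is bounded in $L^{2}_{\overline{s}}(H)$. By reflexivity and a diagonal extraction I would obtain a common subsequence along which $v_\delta(0) \rightharpoonup v_0$, $v'_\delta(0) \rightharpoonup \eta$ weakly in $H$, $v'_\delta \rightharpoonup V$ weakly in $L^{2}(H)$, and $v''_\delta \rightharpoonup W$ weakly in $L^{2}_{\overline{s}}(H)$.

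Setting $v(z) := v_0 + \int_0^z V(r)\,\dr$, one has $v \in C([0,+\infty);H) \cap L^{\infty}(H)$ (using \eqref{eq:49} and weak lower semicontinuity), $v(0)=v_0$, and $v'=V$ distributionally. Passing to the distributional limit against smooth compactly supported test functions, together with the integration by parts of Lemma~\ref{lem:ibp}\eqref{lem:ibp2}, identifies $W$ with $v''$, so $v' \in W^{1,2}_{\frac{1}{2},\frac{3s-1}{2s}}(H)$; the trace theorem Lemma~\ref{lem:ibp}\eqref{lem:ibp1bis} then forces $\eta=v'(0)$, giving \eqref{eq:4} and \eqref{eq:62}. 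For the pointwise weak convergence \eqref{eq:5}, I would reuse the cut-off argument from Step~2: for each $x\in H$ and $\rho \in C^{\infty}([0,+\infty))$ with $\rho \equiv 1$ on $[0,z]$ and compactly supported, write $(v_\delta(z),x)_H$ via integration by parts against $\rho\,x$ and pass to the weak limit. Combining \eqref{eq:5} with the uniform bound \eqref{eq:49} and dominated convergence against test vectors in $H$ yields \eqref{eq:61}.

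The main obstacle is the strong $L^{2}(H)$-convergence \eqref{eq:54}. Since $L^{2}(H)$ is Hilbert and $v'_\delta \rightharpoonup v'$, it suffices to establish norm convergence. Pairing \eqref{eq:6bis} with $v_\delta$ in the $L^{2}_{\underline{s}}(H)$-inner product and integrating by parts via Lemma~\ref{lem:ibp}\eqref{lem:ibp2} yields the identity
\begin{displaymath}
  \int_{0}^{+\infty}\norm{v'_\delta(z)}_{H}^{2}\,\dz = -(v'_\delta(0),v_\delta(0))_H - (\A v_\delta, v_\delta)_{L^{2}_{\underline{s}}(H)} - \delta\,\norm{v_\delta}^{2}_{L^{2}_{\underline{s}}(H)},
\end{displaymath}
and my strategy is to pass to the limit as $\delta \to 0+$ in each term: the boundary term converges via the strong convergence $v_\delta(0) \to v(0)$ in $H$, which follows from the weak convergence combined with the strong coercivity of $\tilde j$ applied to $(\partial_H \tilde j)^{-1}(v'_\delta(0))+\varphi$ and the uniform bound \eqref{eq:41bis}; the liminf inequality \cite[Proposition~2.5]{MR0348562} controls the monotone bracket from below; and the penalty term $\delta\norm{v_\delta}^{2}_{L^{2}_{\underline{s}}(H)}$ is handled by a truncation argument in $z$ together with the bound $\delta\norm{v_\delta}^{2}_{L^{2}_{\underline{s}}(H)} \le C$ obtained by directly testing \eqref{eq:6bis} against $v_\delta$. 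Combined with weak lower semicontinuity of the $L^{2}(H)$-norm and the analogous identity for $v$, this delivers $\norm{v'_\delta}_{L^{2}(H)} \to \norm{v'}_{L^{2}(H)}$ and hence \eqref{eq:54}. The genuine difficulty here is precisely the lack of a uniform $L^{2}_{\underline{s}}$-bound on $v_\delta$, which forces the truncation step. Finally, the pointwise convergence \eqref{eq:64} follows by combining \eqref{eq:54} with the uniform $L^{2}_{\overline{s}}(H)$-bound on $v''_\delta$: on every compact subinterval $[a,b]\subset(0,+\infty)$ the representation $v'_\delta(z)-v'_\delta(z') = \int_{z'}^{z} v''_\delta\,\dr$ and Cauchy--Schwarz give uniform H\"older equicontinuity of $(v'_\delta|_{[a,b]})$, so Arzel\`a--Ascoli together with uniqueness of the weak limit upgrades pointwise weak to pointwise strong convergence at every $z>0$.
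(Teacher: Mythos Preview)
Your weak-compactness skeleton for \eqref{eq:5}, \eqref{eq:61}, \eqref{eq:4}, \eqref{eq:62} is sound, and your argument for \eqref{eq:64} \emph{given} \eqref{eq:54} is correct (uniform equicontinuity on $[a,b]$ plus strong $L^{2}$-convergence does force uniform---hence pointwise---strong convergence; calling this ``Arzel\`a--Ascoli'' is a slight abuse but the mechanism is valid). The genuine gap is in your proof of \eqref{eq:54}.

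Your energy-identity strategy for \eqref{eq:54} breaks down at three places. First, the strong convergence $v_\delta(0)\to v(0)$ in $H$ that you invoke is not available: strong coercivity of $\tilde j$ only tells you that $(\partial_H\tilde j)^{-1}$ maps bounded sets to bounded sets, so it yields \emph{boundedness} of $(v_\delta(0))$, not strong convergence; you therefore cannot pass to the limit in the boundary term $(v'_\delta(0),v_\delta(0))_H$, since both factors are only weakly convergent. Second, the term $(\A v_\delta,v_\delta)_{L^{2}_{\underline{s}}(H)}$ is not controllable: there is no uniform-in-$\delta$ bound on $\|v_\delta\|_{L^{2}_{\underline{s}}(H)}$, and your ``truncation argument'' is not spelled out and does not obviously close---truncating at a finite $T$ produces an extra boundary term $(v'_\delta(T),v_\delta(T))_H$ and leaves the monotone bracket on $[0,T]$, but neither converges in a way that matches a limiting identity. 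Third, and most seriously, the ``analogous identity for $v$'' you need is circular: at this stage $v$ has not yet been shown to solve \eqref{eq:80} (that is Step~5, which \emph{uses} \eqref{eq:54}, \eqref{eq:64}), so no energy identity for $v$ is available to compare against.

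The paper avoids all of this by proving directly that $(v'_\delta)_{\delta>0}$ is \emph{Cauchy} in $L^{2}(H)$. Setting $w=v_\delta-v_{\hat\delta}$, one subtracts the two equations and uses the monotonicity of $A$ to bound $(w'',w)_H$ from below by $-\tfrac{\delta+\hat\delta}{4}C^{2}\,z^{\frac{1-2s}{s}}$; integrating over $[0,T]$, the boundary contribution $(w'(0),w(0))_H\ge 0$ drops by monotonicity of $\partial_H\tilde j$, leaving
\begin{displaymath}
\int_0^T\|w'(z)\|_H^2\,\dz\le C\,\|w'(T)\|_H+\tfrac{\delta+\hat\delta}{4}\,C^2\,\tfrac{s}{1-s}\,T^{\frac{1-s}{s}}.
\end{displaymath}
The pointwise decay $\|v'_\delta(T)\|_H\le C/T$ (from \eqref{eq:57} and \eqref{eq:41bis}) and the tail bound $\int_T^\infty\|v'_\delta\|_H^2\,\dz\le C^2/(2T)$ then allow the choice $T=(\delta+\hat\delta)^{-s}$, which makes the right-hand side vanish as $\delta,\hat\delta\to 0$. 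This Cauchy argument is the missing idea; it needs neither strong convergence of $v_\delta(0)$, nor any identity for the limit $v$, nor control of $v_\delta$ in $L^{2}_{\underline{s}}(H)$.
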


 Before proceeding with \emph{step 4}, we give the proof of this lemma.\medskip

 \begin{proof}[Proof of Lemma~\ref{lem:4}]
   For $\delta$, $\hat{\delta}\in (0,1]$, let $v_{\delta}$ and
   $v_{\hat\delta}$ be two strong solutions of~\eqref{eq:6bis} and set $
     w=v_{\delta}-v_{\hat\delta}$. Then, by~\eqref{eq:6bis},
   \begin{displaymath}
     z^{-\frac{1-2s}{s}} w''(z)\in
     Av_{\delta}(z)-Av_{\hat\delta} +\delta v_{\delta}(z)-\hat{\delta}v_{\hat\delta}(z)\qquad\text{for
       almost every $z>0$}
   \end{displaymath}
   and hence, the monotonicity of $A$ implies that
   \begin{align*}
     &z^{-\frac{1-2s}{s}}(w''(z),w(z))_H\\
     %&=  z^{-\frac{1-2s}{s}}(u''_\gamma-u''_\delta,u_\gamma-u_\delta)_Ht\\
      &\qquad\ge (\delta v_\delta(z) - \hat\delta v_{\hat\delta}(z), v_\delta(z) - v_{\hat\delta}(z))_H\\
      &\qquad=\delta
        \norm{v_\delta(z)}^2_H-\delta\,(v_\delta,v_{\hat\delta}(z))_{H}
        - \hat\delta\,(v_\delta(z),v_{\hat\delta}(z))_{H}
        +\hat{\delta}\,\norm{v_\delta}^2_H.
   \end{align*}
   By Young's inequality,
   \begin{displaymath}
     \delta\,(v_\delta,v_{\hat\delta})_{H} \le
     \delta\norm{v_\delta}^2_H+\frac{\delta}{4}\norm{v_{\hat\delta}}^2_H
     \quad\text{ and }\quad
     \hat{\delta}\,(v_\delta,v_{\hat\delta})_{H} \le \hat\delta \, 
     \norm{v_{\hat{\delta}}}^2_H+\frac{\hat\delta}{4}\norm{v_\delta}^2_H.
   \end{displaymath}
   Therefore and by~\eqref{eq:49},
   \begin{displaymath}
     z^{-\frac{1-2s}{s}}(w''(z),w(z))_H
     \ge - \frac{\delta}{4}\norm{v_{\hat\delta}(z)}^2_H
     -\frac{\hat\delta}{4}\norm{v_\delta(z)}^2_H\ge -\frac{\delta+\hat{\delta}}{4}\,C^{2}.
   \end{displaymath}
   Integrating this inequality over $[0,T]$, for $T>0$, gives
   \begin{align*}
     &(w'(T),w(T))_H -(w'(0),w(0))_H-\int_{0}^{T}\norm{w'(z)}^2_H \,\dz\\
     &\qquad =\int_{0}^{T}\frac{\td}{\dz}(w'(z),w(z))_H \,\dz-\int_{0}^{T}\norm{w'(z)}^2_H \,\dz\\
     &\qquad =\int_{0}^{T}\Big((w''(z),w(z))_H + \norm{w'(z)}^2_H \Big)\,\dz- \int_{0}^{T}\norm{w'(z)}^2_H \,\dz\\
     &\qquad \ge  -\frac{(\gamma+\delta)}{4}C^2 \int_{0}^{T}z^{\frac{1-2s}{s}}\,\dz.
\end{align*}
Since $v'_\delta(0) \in \partial_{H}\tilde{j}(v_\delta(0)-\varphi)$
and $v'_{\hat\delta}(0) \in \partial_{\!
  H}\tilde{j}(v_{\hat\delta}(0)-\varphi)$, the monotonicity of 
 $\partial_{H}\tilde{j}$ implies that $(w'(0),w(0))_H \ge 0$ and so,
we can conclude from the previous inequality that
\begin{displaymath}
   \int_{0}^{T}\norm{w'(z)}^2_H \,\dz\le (w'(T),w(T))_H  
   + \frac{(\gamma+\delta)}{4}C^2 \int_{0}^{T}z^{\frac{1-2s}{s}}\,\dz.
\end{displaymath}
Applying Cauchy-Schwarz's inequality and~\eqref{eq:49} to the right
hand side of this inequality, gives
\begin{equation}
  \label{eq:56}
  \begin{split}
    \int_{0}^{T}\norm{w'(z)}^2_H \,\dz
    %&\le \norm{w'(T)}_{H}\,\norm{w(T)}_H + \frac{(\gamma+\delta)}{4}C^2 \int_{0}^{T}z^{\frac{1-2s}{s}}\,\dz\\
    &\le C\norm{w'(T)}_{H}+\frac{(\gamma+\delta)}{4}C^2
    \int_{0}^{T}z^{\frac{1-2s}{s}}\,\dz.
  \end{split}
\end{equation}

On the other hand, for every $\delta\in (0,1]$ and $T>0$,
\begin{displaymath}
T\int_{T}^{+\infty}\norm{v'_{\delta}(z)}^2_H \,\dz%=\int_{T}^{+\infty}T\norm{v'_\delta(r)}^2_H dr
\le \int_{T}^{+\infty}\norm{z\,v'_\delta(z)}^2_H \,\frac{\dz}{z}
\end{displaymath}
and by~\eqref{eq:57}, the function $z\mapsto \norm{v'_{\delta}(z)}_{H}^{2}$ is decreasing on
$(0,+\infty)$. Thus,
\begin{displaymath}
\int_{0}^{T}\norm{zv'_\delta(z)}^2_H \,\frac{\dz}{z}\ge % \int_{0}^{T}s\norm{u'_\delta(T)}^2_H ds = 
\frac{T^{2}}{2}\norm{v'_\delta(T)}^2_H
\end{displaymath}
and by~\eqref{eq:41bis} and \eqref{eq:49},
\begin{align*}
  %\label{eq:55}
  %\begin{split}
    &\frac{T^{2}}{2}\norm{v'_\delta(T)}^2_H
    +T\int_{T}^{+\infty}\norm{v'_{\delta}(z)}^2_H \,\dz\\
    &\qquad\le \int_{0}^{T}\norm{zv'_\delta(z)}^2_H \,\dz+
    \int_{T}^{+\infty}\norm{z\,v'_\delta(z)}^2_H \,\frac{\dz}{z}%\\
    %&\hspace{2cm} 
       = \norm{z\,v'_{\delta}(z)}_{L^{2}_{\star}(H)}\le \frac{C^2}{2}.
  %\end{split}
\end{align*}
Hence,
\begin{displaymath}
\int_{T}^{+\infty}\norm{v'_\delta(z)}^2_H \,\dz \le \frac{C^{2}}{2
  T}\qquad\text{for every $\delta\in (0,1]$,}
\end{displaymath}
and
\begin{displaymath}
\norm{v'_\delta(T)}_H \le \frac{C}{T}\qquad\text{for every $\delta\in (0,1]$.}
\end{displaymath}
Now, applying these estimates to~\eqref{eq:56}. Then, we obtain
\begin{align*}
    \int_{0}^{T}\norm{w'(z)}^2_H \,\dz
       &\le C\norm{w'(T)}_{H}+\frac{(\gamma+\delta)}{4}C^2
    \int_{0}^{T}z^{\frac{1-2s}{s}}\,\dz\\
  & \le 2\frac{C^2}{T}+ \frac{(\delta+\hat\delta)}{4}C^2
   \frac{s}{1-s} T^{\frac{1-s}{s}}
\end{align*}
and so,
\begin{align*}
\int_{0}^{+\infty}\norm{w'(z)}^2_H \,\dz
& \le \int_{0}^{T}\norm{w'(z)}^2_H \,\dz + 2
  \int_{0}^{T}\norm{v'_{\delta}(z)}^2_H \,\dz +2  \int_{0}^{T}\norm{v'_{\hat{\delta}}(z)}^2_H \,\dz\\ 
&\le  2\frac{C^2}{T}+ \frac{(\delta+\hat{\delta})}{4}C^2 \frac{s}{1-s} T^{\frac{1-s}{s}}
    + 4 \frac{C^{2}}{2 T}.
\end{align*}
Choosing $T := 1 / (\delta + \hat{\delta})^{s}$ and inserting $w =v_{\delta}-v_{\hat\delta}$, then we get
\begin{displaymath}
  \int_{0}^{+\infty}\norm{v'_{\delta}(z)-v'_{\hat\delta}(z)}^2_H
  \,\dz\le  
  \left(2C^{2}+ \frac{C^2}{4} \frac{s}{1-s} 
    + 4 \frac{C^{2}}{2}\right)\, (\delta + \hat{\delta})^{s}.
\end{displaymath}
Therefore, $(v'_\delta)_{\delta>0}$ is a Cauchy sequence in
$L^{2}(H)$, implying that there is a $\hat{v}\in L^{2}(H)$ such that
\begin{displaymath}
  \lim_{\delta\to 0+}v'_\delta=\hat{v}\qquad\text{in $L^{2}(H)$.}
\end{displaymath}
In addition, by~\eqref{eq:49}, $(v_{\delta})_{\delta>0}$ is bounded in
$L^{2}(0,T;H)$ for every $T>0$. Therefore, there is a function
$v\in L^{2}_{loc}(H)$ with weak derivative $v'=\hat{v}$ in $L^{2}(H)$
and after possibly passing to a subsequence of
$(v_{\delta})_{\delta>0}$, one has that~\eqref{eq:61}
and~\eqref{eq:54} hold. Moreover,
by~\eqref{eq:31bis}, the sequence $(v''_{\delta})_{\delta\in (0,1]}$ is
bounded in $L^{2}_{\overline{s}^{\star}}(H)$. Thus, $v$ has a
second weak derivative $v''\in L^{2}_{\overline{s}^{\star}}(H)$
and by possibly replacing $(v_{\delta})_{\delta\in (0,1]}$ again by a subsequence,
one obtains~\eqref{eq:62}. Further,
\begin{displaymath}
  \norm{v(z)-v(\hat{z})}_{H} \le
  \Bigg|\int_{\hat{z}}^{z}\norm{v'(r)}_{H}\,\dr\Bigg|\le \abs{z-\hat{z}}^{1/2}\,\norm{v'}_{L^{2}(H)}
\end{displaymath}
for every $z$, $\hat{z}\ge 0$, showing that $v : [0,+\infty)\to H$ is uniformly continuous. In
particular, since $v'\in
W^{1,2}_{\frac{1}{2},\frac{3s-1}{s}}(H)$,
Lemma~\ref{lem:ibp} and~\eqref{eq:49} imply that
$v\in C^{1}([0,+\infty);H)\cap L^{\infty}(H)$. By the limits~\eqref{eq:54} and~\eqref{eq:62}, the
trace theorem on $W^{1,2}_{\frac{1}{2},\frac{3s-1}{s}}(H)$
(Lemma~\ref{lem:ibp}) implies that~\eqref{eq:4} holds. For $z>0$,
Cauchy-Schwarz's inequality gives
\begin{align*}
  z^{-\frac{1-2s}{2s}}\,\frac{1}{2}\norm{v'_{\delta}(z)-v'(z)}_{H}^{2}
  & =
    \int_{z}^{+\infty}r(r)^{-\frac{1-2s}{2s}}\,(v''_{\delta}(r)-v''(r),v'_{\delta}(r)-v'(r))_{H}\,\dr\\
  &\hspace{1cm}   +\tfrac{2s-1}{2s}
    \int_{z}^{+\infty}r^{-\frac{1}{2s}}
    \frac{1}{2}\norm{v'_{\delta}(r)-v'(r)}_{H}^{2}\,\dr\\
  &\le
    \norm{z^{\frac{3s-1}{2s}}(v''_{\delta}-v'')}_{L^{2}_{\star}(H)}\,\norm{v'_{\delta}-v'}_{L^{2}(H)},\\
  &\hspace{1cm}   +\labs{\tfrac{2s-1}{2s}} \tfrac{1}{2} z^{-\frac{1}{2s}}\;\norm{v'_{\delta}-v'}_{L^{2}(H)}^{2}\\
  & =
    \norm{v''_{\delta}-v''}_{L^{2}_{\overline{s}^{\star}}(H)}\,\norm{v'_{\delta}-v'}_{L^{2}(H)}\\
  &\hspace{2cm}  + \labs{\tfrac{2s-1}{2s}} \tfrac{1}{2} z^{-\frac{1}{2s}}\;\norm{v'_{\delta}-v'}_{L^{2}(H)}^{2}.
\end{align*}
Thus, and by~\eqref{eq:54} and~\eqref{eq:31bis}, one has
that~\eqref{eq:64}. By~\eqref{eq:49}, there is a $v_{0}\in H$ and a
subsequence of $(v_{\delta})_{\delta\in (0,1]}$ such that
$v_{\delta}(0)\rightharpoonup v_{0}$ weakly in $H$ as $\delta\to
0+$. Now, let $x\in H$ and for $\rho\in C^{\infty}([0,+\infty))$
satisfying $0\le \rho\le 1$, $\rho\equiv 1$ on $[0,1]$ and
$\rho\equiv 0$ on $[2,+\infty)$, set $\xi(z)=\rho(z)\,x$ for every
$z\ge 0$. Since $v_{\delta}$ and $v$ belong to $C^{1}([0,+\infty);H)$
and by the limits~\eqref{eq:61} and~\eqref{eq:64}, one has that
 \begin{align*}
   (v_{\delta}(0),x)_{H} &= 
      -\int_{0}^{2}\frac{\td}{\dz}(v_{\delta}(z),\xi(z))_{H}\,\dz\\
    &= - \int_{0}^{2}(v'_{\delta}(z),\xi(z))_{H}\,\dz  -
      \int_{0}^{2}(v_{\delta}(z),\xi'(z))_{H}\,\dz\\
   &\to - \int_{0}^{+\infty}(v'(z),\xi(z))_{H}\,\dz  -
      \int_{0}^{2}(v(z),\xi'(z))_{H}\,\dz\\
   &= (v(0),x)_{H} 
 \end{align*}
as $\delta\to0+$. Since $x\in H$ was arbitrary, this shows that
$v_{0}=v(0)$ and
\begin{displaymath}
  %\label{eq:85}
  \lim_{\delta\to0+}v_{\delta}(0)=v(0)\qquad\text{weakly in $H$.}
\end{displaymath}
Moreover, since for every $z>0$,
\begin{displaymath}
  v_{\delta}(z)=v_{\delta}(0)+\int_{0}^{z}v'_{\delta}(r)\,\dr\qquad\text{and}\qquad
  v(z)=v(0)+\int_{0}^{z}v'(r)\,\dr, 
\end{displaymath}
the previous limit together with~\eqref{eq:54} yields that
\eqref{eq:5} holds. 
\end{proof}

 \noindent\emph{Continuation of the proof of
   Theorem~\ref{thm:1bis}. }\medskip

\underline{5. The limit $v$ is a solution of~\eqref{eq:2}.} 
 For a compact interval $K:=[a,b]$ with
 $0<a<b<+\infty$, let
 $L^{2}_{K}(H)$ denote the set of all $L^{2}$-integrable functions $v
 : K\to H$ and $\A_{K}$ denote the operator
\begin{displaymath}
  \A_{K}:=\Big\{(v,w)\in L^{2}_{K}(H)\times
  L^{2}_{K}(H)\,\Big\vert\, w(z)\in Av(z)
  \text{ for a.e. }z\ge 0\Big\}.
\end{displaymath}
Then, since $A$ is maximal monotone on $H$ and $K$ has finite measure,
it follows that $\A_{K}$ is maximal monotone on $L^{2}_{K}(H)$. Moreover,
since the restrictions on $K$ of functions $v\in L^{2}_{\underline{s}^{\star}}(H)$, $L^{2}(H)$, and
$L^{2}_{\overline{s}^{\star}}(H)$ belong to $L^{2}_{K}(H)$, the
equation~\eqref{eq:6bis} can be rewritten as
\begin{displaymath}
 \A_{K} v_{\delta}+\delta v_{\delta}\ni z^{-\frac{1-2s}{s}}v''_{\delta}\qquad\text{in $L^{2}_{K}(H)$.}
\end{displaymath}
By~\eqref{eq:49} and~\eqref{eq:62}, $f_{\delta}:=z^{-\frac{1-2s}{s}}v''_{\delta} -\delta\,
  v_{\delta}$ satisfies $f_{\delta}\in \A_{K} v_{\delta}$ and
\begin{equation}
  \label{eq:8}
  \lim_{\delta\to
    0+}f_{\delta}=z^{-\frac{1-2s}{s}}v''\text{ weakly in $L^{2}_{K}(H)$.} 
\end{equation}
% \&
% \begin{equation}
% f_\delta \in Au_\delta\ \ \text{for all}\ L^2(0,T;H) \iff(u_\delta,f_\delta)\in A
% \end{equation}
% \&
By~\eqref{eq:61}, if 
% \begin{equation}
% u_\delta \rightarrow u \ \ \text{in}\ L^2(0,T;H)
% \end{equation}
% If
\begin{equation}
  \label{eq:63}
\lim_{\delta\to 0+} (v_\delta,f_\delta)_{L^2_{K}(H)} \le (v,z^{-\frac{1-2s}{s}}v'')_{L^{2}_{K}(H)},
\end{equation}
then by~\cite[Proposition~2.15]{MR0348562}, we have that
\begin{displaymath}
  v\in D(\A_{K})\quad\text{and}\quad
  z^{-\frac{1-2s}{s}}v''\in \A_{K}v.
\end{displaymath}
To see that~\eqref{eq:63} holds, we write
\begin{displaymath}
  (v_{\delta},f_{\delta})_{L^2_{K}(H)} - (v, z^{-\frac{1-2s}{s}}v'')_{L^2_{K}(H)}
  =(v_{\delta}-v,f_{\delta})_{L^2_{K}(H)}+(v,f_{\delta}-z^{-\frac{1-2s}{s}}v'')_{L^2_{K}(H)}
\end{displaymath}
and note that by~\eqref{eq:8}, one has that
\begin{displaymath}
  (v,f_{\delta}-z^{-\frac{1-2s}{s}}v'')_{L^2_{K}(H)}=(v,f_{\delta})
  _{L^2_{K}(H)}-(v,z^{-\frac{1-2s}{s}}v'')_{L^2_{K}(H)}\to 0
\end{displaymath}
as $\delta\to0+$. In addition, since $K=[a,b]$, and by the limits~\eqref{eq:5}, \eqref{eq:61},
\eqref{eq:54}, \eqref{eq:64} and since by~\eqref{eq:49}, $\delta
v_{\delta}\to 0$ strongly in $L^{2}_{loc}(H)$, we have that
\allowdisplaybreaks
\begin{align*}
% & (v_{\delta},f_{\delta})_{L^2_{K}(H)} - (v, z^{-\frac{1-2s}{s}}v'')_{L^2_{K}(H)}\\
% &\hspace{1cm}
% =(v_{\delta}-v,f_{\delta})_{L^2_{K}(H)}+(v,f_{\delta}-z^{-\frac{1-2s}{s}}v'')_{L^2_{K}(H)}\\
(v_{\delta}-v,f_{\delta})_{L^2_{K}(H)}
  &=(v_{\delta}-v,z^{-\frac{1-2s}{s}}v''_{\delta}-\delta
    v_{\delta})_{L^{2}_{K}(H)}\\
  &= (v_{\delta}-v,z^{-\frac{1-2s}{s}}(v''_{\delta}-v''))_{L^{2}_{K}(H)}\\
    &\hspace{3cm} +(v_{\delta}-v, z^{-\frac{1-2s}{s}}v''-\delta v_{\delta})_{L^{2}_{K}(H)}\\
  & = \int_{a}^{b}
    (v_{\delta}(z)-v(z),v''_{\delta}(z)-v''(z))_{H}\,z^{-\frac{1-2s}{s}}\,\dz\\
  &\hspace{3cm} +(v_{\delta}-v,
    z^{-\frac{1-2s}{s}}v''-\delta
    v_{\delta})_{L^{2}_{K}(H)}\\
  & = \int_{a}^{b}
    \frac{\td}{\dz}(v_{\delta}(z)-v(z),v'_{\delta}(z)-v'(z))_{H}\;z^{-\frac{1-2s}{s}}\,\dz\\
  &\hspace{2cm} -\int_{a}^{b} \norm{v'_{\delta}(z)-v'(z)}_{H}^{2}\;z^{-\frac{1-2s}{s}}\,\dz\\
  &\hspace{3cm} +(v_{\delta}-v,
    z^{-\frac{1-2s}{s}}v''-\delta
    v_{\delta})_{L^{2}_{K}(H)}\\
&=(v_{\delta}(z)-v(z),v'_{\delta}(z)-v'(z))_{H}\;z^{-\frac{1-2s}{s}}\,\Big\vert_{a}^{b}\\
  &\hspace{1cm}+\frac{1-2s}{s} \int_{a}^{b}
    (v_{\delta}(z)-v(z),v'_{\delta}(z)-v'(z))_{H}\;z^{-\frac{1-s}{s}}\,\dz\\
  &\hspace{2cm} -\int_{a}^{b} \norm{v'_{\delta}(z)-v'(z)}_{H}^{2}\;z^{-\frac{1-2s}{s}}\,\dz\\
  &\hspace{3cm} +(v_{\delta}-v,
    z^{-\frac{1-2s}{s}}v''-\delta
    v_{\delta})_{L^{2}_{K}(H)}\\
 & \rightarrow 0\qquad\text{as $\delta\to 0+$,}
\end{align*}
This show that~\eqref{eq:63} holds and since the compact sub-interval
$K=[a,b]$ of $\R_{+}$ was arbitrary, we have thereby shown that $v$ is
a solution of~\eqref{eq:19} or equivalently, a strong solution
of~\eqref{eq:80}. It remains to show that
$v'(0)\in \partial_{H}\tilde{j}(v(0)-\varphi)$. To see this, note
first that if $j$ is the indicator function, then
condition~\eqref{eq:53} reduces to the condition $v(0)=\varphi$. Since
$v_{\delta}(0)=\varphi$ for all $\delta>0$, we have by~\eqref{eq:5}
that $v(0)=\varphi$. Now, suppose
$\partial_{H}\tilde{j} : D(\partial_{H}\tilde{j})\to H$ is a weakly
continuous mapping. Then, since $v'_{\delta}(0)=\partial_{H}\tilde{j}(v_{\delta}(0)-\varphi)$
for every $\delta\in (0,1]$, the weak continuity of
$\partial_{\!  H}\tilde{j}$ together with~ \eqref{eq:5}
and~\eqref{eq:4} imply that $v(0)-\varphi\in D(\partial_{H}\tilde{j})$
and $v'(0)=\partial_{\!  H}\tilde{j}(v(0)-\varphi)$.\medskip

\underline{6. The solution $v$ of~\eqref{eq:2} satisfies $v\in
  L^{\infty}(H)$ and~\eqref{eq:87}-\eqref{eq:31bis2}.} 
 Thanks to \eqref{eq:64}, we can
send $\delta\to 0+$ in~\eqref{eq:57} and obtain~\eqref{eq:66}. 
Due to \eqref{eq:64} and~\eqref{eq:5}, sending $\delta\to 0+$
in~\eqref{eq:68} yields that
\begin{displaymath}
  \frac{\td}{\dz}\frac{1}{2}\norm{v(z)}_{H}^{2}=(v'(z),v(z))_{H}\le
  0\qquad\text{for all $z\ge 0$.}  
\end{displaymath}
Hence, \eqref{eq:87} holds and, % Since $v\in W^{2,2}_{loc}(H)$, the monotonicity of $A$
% shows that
% \begin{displaymath}
%   \frac{\td^2}{\dz^2}\frac{1}{2}\norm{v(z)}_{H}^{2}=(v''(z),v(z))_{H}+\norm{v'(z)}_{H}^{2}\ge
%   0\qquad\text{for a.e. $z\ge 0$}  
% \end{displaymath}
% and hence, $z\mapsto \frac{1}{2}\norm{v(z)}_{H}^{2}$ is convex. Since
% positive convex and decreasing functions need to bounded, it follows that
in particular, $v\in L^{\infty}(H)$.  By~\eqref{eq:64}, sending
$\delta\to~0+$ in~\eqref{eq:57} and using that
$v\in C^{1}([0,+\infty);H)$, one obtains~\eqref{eq:66}. To see
that~\eqref{eq:65} holds, we first note that by~\eqref{eq:64}
and~\eqref{eq:5}, sending $\delta\to0+$ in~\eqref{eq:68} and using
that $v\in C^{1}([0,+\infty);H)$ yields
\begin{equation}
  \label{eq:48}
  (v'(z),v(z))_{H}\le 0 \qquad\text{for every $z\ge 0$.}
\end{equation}
Moreover, since $v$ is a strong solution of~\eqref{eq:80}
and since $A$ is monotone,
\begin{displaymath}
  \int_{\varepsilon}^{T}\,z\, (v''(z),v(z))_{H}\,\dz \ge 0
\end{displaymath}
for every $T>\varepsilon>0$. Using this estimate, one sees that
\allowdisplaybreaks
\begin{align*}
       &T(v'(T),v(T))_{H} - \varepsilon(v'(\varepsilon),v(\varepsilon))_{H}\\
         % &\qquad=\int_{\varepsilon}^{T}\frac{\td}{\dz}\Big(
         %                                        z\,(v'(z),v(z))_{H}\Big)\, \dz \\
        % &\qquad= \int_{\varepsilon}^{T}(v'(z),v(z))_{H}\, \dz + \int_{\varepsilon}^{T}
        % z\,(v''(z),v(z))_{H}\,\dz\\
        %  &\qquad\hspace{3cm} + \int_{\varepsilon}^{T} z\,\norm{v'(z)}^2_H \,\dz\\
        &\qquad\ge
           \int_{\varepsilon}^{T}\frac{\td}{\dz}\frac{1}{2}\norm{v(z)}^2_{H}\,\dz
           + \int_{\varepsilon}^{T} z\,\norm{v'(z)}^2_H \,\dz\\
         &\qquad= \frac{1}{2} \norm{v(T)}^2_H - \frac{1}{2} \norm{v(\varepsilon)}^2_H +
         \int_{\varepsilon}^{T}z\norm{v'(z)}^2_H \,\dz\\
        & \qquad\ge - \frac{1}{2} \norm{v(\varepsilon)}^2_H +
         \int_{\varepsilon}^{T}z\norm{v'(z)}^2_H \,\dz
\end{align*}
Rearranging this inequality and applying~\eqref{eq:48} and
\eqref{eq:87}, one gets
\begin{align*}
    \int_{\varepsilon}^{T}z\norm{v'(z)}^2_H \,\dz &\le
        T(v'(T),v(T))_{H}- \varepsilon(v'(\varepsilon),v(\varepsilon))_{H}+\frac{1}{2}
                 \norm{v(\varepsilon)}^2_H\\
                    &\le  - \varepsilon\,(v'(\varepsilon),v(\varepsilon))_{H}+\frac{1}{2}
                 \norm{v(0)}^2_H.
 \end{align*}
Since $v\in C^{1}([0,+\infty);H)$, sending first $\varepsilon\to 0+$
and then $T\to +\infty$ in the resulting inequality shows
that~\eqref{eq:65} holds. 
% \begin{displaymath} 
%   \norm{v'(z)}_{H}\le \norm{v'(\hat{z})}_{H}\qquad\text{for every
%     $z\ge \hat{z}>0$.}
% \end{displaymath}
By~\eqref{eq:66} and~\eqref{eq:65}, we see that
\begin{displaymath}
  \frac{z^{2}}{2}\norm{v'(z)}^{2}_{H}\le
  \int_{0}^{z}r\norm{v'(r)}_{H}^{2}\,\dr\le \frac{1}{2}
                 \norm{v(0)}^2_H
\end{displaymath}
for every $z>0$, which shows that~\eqref{eq:89} holds.
% By~\eqref{eq:41bis}, there is a
% subsequence of $(v_{\delta})_{\delta}$ such that
% $z v'_{\delta}\rightharpoonup z v'$ weakly in $L^{2}_{\ast}(H)$. Thus,
% sending $\delta\to 0+$ in~\eqref{eq:41bis}, we
% obtain~\eqref{eq:65}. If $j$ is the indicator function, then
% $v_{\delta}(0)=\varphi$ for all $\delta>0$ and hence, \eqref{eq:41bis}
% holds with constant $C=\norm{\varphi}_{H}$.
The estimates~\eqref{eq:30bis2}-\eqref{eq:31bis2} are obtained  
from~\eqref{eq:30bis}-\eqref{eq:31bis} by taking advantage of the
underlying weak limit. Finally,
we want to show that~\eqref{eq:77} holds. To do this, let $h>0$. 
% \begin{displaymath}
%   w_{h}(z):=v(z+h)-v(z)\qquad\text{for every $z\ge 0$.}
% \end{displaymath}
Then, by the monotonicity of $A$, one has that
\begin{displaymath}
  \Big((z+h)^{\frac{1-2s}{s}}v''(z+h)-z^{\frac{1-2s}{s}}v''(z),v(z+h)-v(z)\Big)_{H}\ge 0
\end{displaymath}
for almost every $z>0$. Let $\xi\in C^{2}([0,+\infty))$ be such that
\begin{displaymath}
  \psi(z):=\xi(z) (z+h)^{\frac{1-2s}{s}}
\end{displaymath}
is increasing and satisfies $\psi(0)=0$. (For example, take
\begin{equation}
  \label{eq:3}
  \xi(z)= z^{\beta}(z+h)^{-\frac{1-2s}{s}}\qquad\text{for
    every $z\ge 0$,}
\end{equation}
for some $\beta>0$.) Then,
\begin{align*}
  &\int_{0}^{z}\psi(r)\,(v''(r+h)-v''(r),v(r+h)-v(r))_{H}\,\dr\\
  &\hspace{1cm}
    +\int_{0}^{z}\xi(r)\,\Big((r+h)^{\frac{1-2s}{s}}-r^{\frac{1-2s}{s}}\Big)(v''(r),v(r+h)-v(r))_{H}\,\dr
    \ge 0
\end{align*}
and hence
\begin{align*}
 & \psi(z) \,(v'(z+h)-v'(z),v(z+h)-v(z))_{H}\\
  &\hspace{1cm}-\int_{0}^{z}\psi'(r) \,(v'(r+h)-v'(r),v(r+h)-v(r))_{H}\,\dr\\
    &\hspace{3cm}
      -\int_{0}^{z}\psi(r)\norm{v'(r+h)-v'(r)}_{H}^{2}\,\dr\\
  &+\int_{0}^{z}\xi(r)\,\Big((r+h)^{\frac{1-2s}{s}}-r^{\frac{1-2s}{s}}\Big)(v''(r),v(r+h)-v(r))_{H}\,\dr\ge0.
\end{align*}
Therefore, and since $(v'(z+h)-v'(z),v(z+h)-v(z))_{H}\le 0$,
\begin{align*}
& \int_{0}^{z}\psi(r)\norm{v'(r+h)-v'(r)}_{H}^{2}\,\dr
   +\int_{0}^{z}\psi'(r) \,\frac{\td}{\dr}\frac{1}{2}\norm{v(r+h)-v(r)}^{2}_{H}\,\dr\\
  &\hspace{1cm} \le  \psi(z) \,(v'(z+h)-v'(z),v(z+h)-v(z))_{H}\\
  &\hspace{1cm} \qquad
    +\int_{0}^{z}\xi(r)\,\Big((r+h)^{\frac{1-2s}{s}}-r^{\frac{1-2s}{s}}\Big)(v''(r),v(r+h)-v(r))_{H}\,\dr\\
  &\hspace{1cm} \le \int_{0}^{z}\xi(r)\,
    \Big((r+h)^{\frac{1-2s}{s}}-r^{\frac{1-2s}{s}}\Big)(v''(r),v(r+h)-v(r))_{H}\,\dr.
\end{align*}
Now, integrating by parts, yields
\begin{align*}
  & \int_{0}^{z}\psi(r)\norm{v'(r+h)-v'(r)}_{H}^{2}\,\dr
    +\psi'(z) \,\frac{1}{2}\norm{v(z+h)-v(z)}^{2}_{H}\\
  % &\hspace{1cm} \le  \psi(z) \,(v'(z+h)-v'(z),v(z+h)-v(z))_{H}\\
  % &\hspace{1cm} \qquad
  %   +\int_{0}^{T}\xi(z)\,\Big((z+h)^{\frac{1-2s}{s}}-z^{\frac{1-2s}{s}}\Big)
      % (v''(z),v(z+h)-v(z))_{H}\,\dz\\
  &\hspace{1cm} \le
    \int_{0}^{z}\xi(r)\,\Big((r+h)^{\frac{1-2s}{s}}-r^{\frac{1-2s}{s}}\Big)(v''(r),v(r+h)-v(r))_{H}\,\dr\\
  &\hspace{1cm} \quad +\int_{0}^{z}\psi''(r)
    \,\frac{1}{2}\norm{v(r+h)-v(r)}^{2}_{H}\,\dr
    + \psi'(0) \,\frac{1}{2}\norm{v(0+h)-v(0)}^{2}_{H}.
\end{align*}
% Since
% \begin{align*}
%   & \int_{0}^{T}z^{3} (w''(z),w(z))_{H}\,\dz\\
%   &\quad =
%     T^{3}(w'(T),w(T))_{H}-\int_{0}^{T}3z^{2}(w'(z),w(z))_{H}\,\dz 
%     -\int_{0}^{T}z^{3}\norm{w'(z)}^{2}_{H}\,\dz
% \end{align*}
% and since by~... $(w'(T),w(T))_{H}\le 0$, one has that
% \begin{align*}
%   &\int_{0}^{T}z^{3} \left((z+h)^{\frac{1-2s}{s}}-z^{\frac{1-2s}{s}}\right)\, (a(z),w(z))_{H}
%     \,\dz\\
%   &\qquad +\int_{0}^{T}z^{3}\norm{w'(z)}^{2}_{H}\,\dz
%     +\frac{3}{2}\int_{0}^{T}z^{2}\frac{\td}{\dz}\norm{w(z)}^{2}_{H}\,\dz
%   \le 0
% \end{align*}
% or equivalently,
% \begin{align*}
%   &\int_{0}^{T}z^{3} \left((z+h)^{\frac{1-2s}{s}}-z^{\frac{1-2s}{s}}\right)\, (a(z),w(z))_{H}
%     \,\dz\\
%   &\qquad+\int_{0}^{T}z^{3}\norm{w'(z)}^{2}_{H}\,\dz+\frac{3}{2}T^{2}\norm{w(T)}^{2}_{H}
%     \le   3\int_{0}^{T}z\,\norm{w(z)}^{2}_{H}\,\dz
% \end{align*}
Dividing this inequality by $h^{2}$ and sending $h\to 0+$, yields
\begin{align*}
  & \int_{0}^{z}\psi(r)\norm{v''(r)}_{H}^{2}\,\dr
    +\psi'(z) \,\frac{1}{2}\norm{v'(z)}^{2}_{H}\\
  % &\hspace{1cm} \le  \psi(z) \,(v'(z+h)-v'(z),v(z+h)-v(z))_{H}\\
  % &\hspace{1cm} \qquad
  %   +\int_{0}^{T}\xi(z)\,\Big((z+h)^{\frac{1-2s}{s}}-z^{\frac{1-2s}{s}}\Big)
      % (v''(z),v(z+h)-v(z))_{H}\,\dz\\
  &\hspace{1cm} \le \frac{1-2s}{s}
    \int_{0}^{z}\xi(r)\,r^{\frac{1-3s}{s}}\,(v''(r),v'(r))_{H}\,\dr\\
  &\hspace{1cm} \quad +\int_{0}^{z}\psi''(r)
    \,\frac{1}{2}\norm{v'(r)}^{2}_{H}\,\dr
    + \psi'(0) \,\frac{1}{2}\norm{v'(0)}^{2}_{H}.
\end{align*}
Inserting $\xi$ from~\eqref{eq:3},
% \xi(z)=z^{\beta}(z+h)^{-\frac{1-2s}{s}},
%then $\psi(z)=z^{\beta}$
then
\begin{align*}
  & \int_{0}^{z} r^{\beta}\norm{v''(r)}_{H}^{2}\,\dr
    +\beta\,z^{\beta-1} \,\frac{1}{2}\norm{v'(z)}^{2}_{H}\\
  % &\hspace{1cm} \le  \psi(z) \,(v'(z+h)-v'(z),v(z+h)-v(z))_{H}\\
  % &\hspace{1cm} \qquad
  %   +\int_{0}^{T}\xi(z)\,\Big((z+h)^{\frac{1-2s}{s}}-z^{\frac{1-2s}{s}}\Big)
      % (v''(z),v(z+h)-v(z))_{H}\,\dz\\
  &\hspace{1cm} \le \frac{1-2s}{s}
    \int_{0}^{z}r^{\beta-1}%z^{-\frac{1-2s}{s}}\,z^{\frac{1-3s}{s}}
    \,(v''(r),v'(r))_{H}\,\dr\\
  &\hspace{1cm} \quad +\beta (\beta-1)\int_{0}^{z} r^{\beta-2}
    \,\frac{1}{2}\norm{v'(r)}^{2}_{H}\,\dr
    + \beta\,z^{\beta-1}_{\vert z=0} \,\frac{1}{2}\norm{v'(\delta)}^{2}_{H}.
\end{align*}
Choosing $\beta=3$ in this estimate, then one finds,
\begin{align*}
  & \int_{0}^{z} r^{3}\norm{v''(r)}_{H}^{2}\,\dr
    +3\,z^{2} \,\frac{1}{2}\norm{v'(z)}^{2}_{H}\\
  % &\hspace{1cm} \le  \psi(z) \,(v'(z+h)-v'(z),v(z+h)-v(z))_{H}\\
  % &\hspace{1cm} \qquad
  %   +\int_{0}^{T}\xi(z)\,\Big((z+h)^{\frac{1-2s}{s}}-z^{\frac{1-2s}{s}}\Big)
      % (v''(z),v(z+h)-v(z))_{H}\,\dz\\
  &\hspace{1cm} \le \frac{1-2s}{s}
    \int_{0}^{z}r^{2}%z^{-\frac{1-2s}{s}}\,z^{\frac{1-3s}{s}}
    \,(v''(r),v'(r))_{H}\,\dr% \\
  % &\hspace{1cm} \quad 
    + 3\,\int_{0}^{z} r\,
    \norm{v'(r)}^{2}_{H}\,\dr.
  %  + \beta\,z^{\beta-1}_{\vert z=0} \,\frac{1}{2}\norm{v'(0)}^{2}_{H}.
\end{align*}
Thus, if $s\ge 1/2$, then by applying~\eqref{eq:66},
one sees that
\begin{displaymath}
  \int_{0}^{z} r^{3}\norm{v''(r)}_{H}^{2}\,\dr\le  3\,\frac{\norm{\varphi}_{H}^{2}}{2}
\end{displaymath}
and hence the first part of~\eqref{eq:77} holds by sending
$z\to +\infty$. If $0<s<1/2$, then $\frac{1-2s}{s}>0$
and so, by Young's inequality, we have for every $\varepsilon>0$ that
\begin{align*}
  & \int_{0}^{z} r^{3}\norm{v''(r)}_{H}^{2}\,\dr
    +3\,z^{2} \,\frac{1}{2}\norm{v'(z)}^{2}_{H}\\
  &\hspace{0.5cm} \le \varepsilon
    \int_{0}^{z}\norm{r^{\frac{3}{2}}v''(r)}_{H}^{2}\,\dr+\frac{s}{1-2s}\frac{1}{4\varepsilon}
    \int_{0}^{z}\norm{r^{\frac{1}{2}}v'(r)}^{2}_{H}\,\dr% \\
  % &\hspace{1cm} \quad 
    + 3\,\int_{0}^{z} r\,\norm{v'(r)}^{2}_{H}\,\dr.
\end{align*}
Choosing $\varepsilon=1/2$, one finds
\begin{displaymath}
  \frac{1}{2}\int_{0}^{z} r^{3}\norm{v''(r)}_{H}^{2}\,\dr\le \left(\frac{s}{1-2s}\frac{1}{2}+3\right)
    \int_{0}^{z}\norm{r^{\frac{1}{2}}v'(r)}^{2}_{H}\,\dr
\end{displaymath}
and hence by applying~\eqref{eq:66} and sending $z\to +\infty$, one
sees that the second part of~\eqref{eq:77} holds.\medskip

To see that for boundary data $\varphi\in
\overline{D(A)}^{\mbox{}_{H}}$, there is a solution $v$ of Dirichlet
problem~\eqref{eq:2bis} satisfying~\eqref{eq:87}-\eqref{eq:77},
and~\eqref{eq:88}, one proceeds as in the proof of Theorem~\ref{thm:1}
and Theorem~\ref{thm:2Robin}. 
This completes the proof of this theorem.
\end{proof}

Next, we outline the proof that the DtN operator
$\Theta_{1-2s}$ is monotone and establish the
characterization of the closure $\overline{\Theta}_{s}$.

\begin{proof}[Proof of Corollary~\ref{cor:1}]
For given $\varphi_{1}$ and $\varphi_{2}\in D(A)$, let $v_{1}$,
$v_{2}\in L^{\infty}(H)$ be two strong solutions of~\eqref{eq:2}
respectively to boundary data $\varphi_{1}$ and $\varphi_{2}$. Then
by the monotonicity of $A$,
\begin{equation}
  \label{eq:74}
  \begin{split}
    \frac{\td^2}{\dz^2}\frac{1}{2}\norm{v_{1}(z)-v_{2}(z)}_{H}^{2}
    &=(v''_{1}(z)-v''_{2}(z),v_{1}(z)-v_{2}(z))_{H}\\
    &\hspace{2.5cm} +\norm{v'_{1}(z)-v'_{2}(z)}_{H}^{2}\ge 0.
  \end{split}
\end{equation}
Thus, the map $z\mapsto \frac{1}{2}\norm{w(z)}_{H}^{2}$ is convex on
$[0,+\infty)$. Since $v_{1}-v_{2}\in L^{\infty}(H)$ and since every
bounded convex function is necessarily decreasing, we have that
$z\mapsto \frac{1}{2}\norm{w(z)}_{H}^{2}$ is decreasing on
$[0,+\infty)$. Thus,
\begin{displaymath}
  \frac{\td}{\dz}\frac{1}{2}\norm{v_{1}(z)-v_{2}(z)}_{H}^{2}=(v'_{1}(z)-v'_{2}(z),v_{1}(z)-v_{2}(z))_{H}\le
  0
\end{displaymath}
for every $z\ge \hat{z}\ge 0$, which shows that
$\Theta_{s}\varphi:=-(2s)^{1-2s} v'(0)$ is a monotone
operator. It follows from the definition of $\Theta_{s}$ and
by Theorem~\ref{thm:2} that $D(A)$ is a subset of
$D(\Theta_{s})$. On the other hand, for every
$\varphi\in D(\Theta_{s})$, there is a solution
$v\in C^{1}([0,+\infty);H)$ of Dirichlet problem~\eqref{eq:2bis} with
$v(0)=\varphi$. Since $v(t)\in D(A)$ for a.e. $t>0$, it follows that
$\varphi\in \overline{D(A)}^{\mbox{}_{H}}$, showing that
$D(\Theta_{s})\subseteq \overline{D(A)}^{\mbox{}_{H}}$. Further, by the regularity
$v\in C^{1}([0,+\infty);H)$ and the uniqueness of the solutions to Dirichlet
problem~\eqref{eq:2bis}, the operator  $\Theta_{s}$ is a
well-defined mapping from $D(\Theta_{s})$ to $H$.
% , where $\Theta^{0}_{s}$ shall denotes
% the minimal selection of $\Theta_{s}$.
% Moreover,
% \begin{displaymath}
%   \norm{v_{1}(z)-v_{2}(z)}_{H}\le
%   \norm{v_{1}(\hat{z})-v_{2}(\hat{z})}_{H}
%   \qquad\text{for every $z\ge \hat{z}\ge 0$.} 
% \end{displaymath}

Next, we show that the closure $\overline{\Theta}_{s}$ of
$\Theta_{s}$ in $H\times H_{w}$ coincides with the set
\begin{displaymath}
  B:=\Bigg\{(\varphi,w)\in H\times H\,\Bigg\vert
    \begin{array}[c]{l}
      \exists \text{ $(\varphi_{n},w_{n})\in \Theta_{s}$ s.t. }
      \displaystyle\lim_{n\to +\infty}(\varphi_{n},w_{n})=(\varphi,w)\\
      \text{ in $H\times H_{w}$ \&} \text{ a strong solution $v$ of~\eqref{eq:80}}\\
      \text{satisfying }~\eqref{eq:87}\;\&\; v(0)=\varphi\text{ in $H$.}
    \end{array}
    \Bigg\}
\end{displaymath}
Obviously, the set $B$ is cointained in
$\overline{\Theta}_{s}$. So, let
$(\varphi,w)\in \overline{\Theta}_{s}$. Then, there is a
sequence $((\varphi_{n},w_{n}))_{n\ge 1}\subseteq \Theta_{s}$
such that $(\varphi_{n},w_{n})\to (\varphi,w)$ in $H\times H_{w}$ as
$n\to +\infty$. By definition of $\Theta_{s}$, there are
solutions $v_{n}\in C^{1}([0,+\infty);H)$ of Dirichlet
problem~\eqref{eq:2bis} with $v_{n}(0)=\varphi_{n}$ and satisfying 
$\Theta_{s}v_{n}(0)=w_{n}$. Since every
$\varphi_{n}\in \overline{D(A)}^{\mbox{}_{H}}$, Theorem~\ref{thm:2}
yields that each $v_{n}$ satisfies~\eqref{eq:87}
and by~\eqref{eq:88}, $(v_{n})_{n\ge 0}$ is a Cauchy sequence in
$C^{b}([0,+\infty);H)$. Hence, there is a $v\in C^{b}([0,+\infty);H)$
such that $v_{n}\to v$ in $C^{b}([0,+\infty);H)$ as $n\to +\infty$ and
so, $v(0)=\varphi$. Since each
$\varphi_{n}\in \overline{D(A)}^{\mbox{}_{H}}$, we also have that
$\varphi \in \overline{D(A)}^{\mbox{}_{H}}$. Hence,
Theorem~\ref{thm:2} yields the existence of a unique strong solution $v$
of~\eqref{eq:2bis}, proving that $(\varphi,w)\in B$. 
% To see that $\overline{\Theta}_{s}=\Theta_{s}$ and
% therefore $\Theta_{s}$ is closed, note that
% by Lemma~\ref{lem:ibp}, for every $w_{n}$, there is a $\xi_{n}\in
% W^{1,2}_{\frac{1}{2},\frac{3s-1}{2s}}(H)$ such that
% $\textrm{Tr}(\xi_{n})=w_{n}$ and
% \begin{displaymath}
%   \norm{\xi_{n}}_{W^{1,2}_{\frac{1}{2},\frac{3s-1}{2s}}(H)}=\inf_{\xi\in W^{1,2}_{\frac{1}{2},\frac{3s-1}{2s}}(H) : \xi(0)=w_{n}}
%     \norm{\xi}_{W^{1,2}_{\frac{1}{2},\frac{3s-1}{2s}}(H)}\le c_{2}\norm{w_{n}}_{H}.
% \end{displaymath}
% Since $w_{n}\to w$ in $H$ as $n\to +\infty$, the sequence
% $(\xi_{n})_{n\ge 1}$ is bounded in
% $W^{1,2}_{\frac{1}{2},\frac{3s-1}{2s}}(H)$ and so, there is a
% $\xi \in W^{1,2}_{\frac{1}{2},\frac{3s-1}{2s}}(H)$ such that after
% possibly passing to a subsequence of $(\xi_{n})_{n\ge 1}$, one has
% that $\xi_{n}\rightharpoonup\xi$ weakly in
% $W^{1,2}_{\frac{1}{2},\frac{3s-1}{2s}}(H)$ as $n\to +\infty$. Thus and
% by the continuity of the trace operator $\textrm{Tr}$, one has that $\textrm{Tr}(\xi)=w$

Now, for given $\lambda>0$, the functional $j : H\to [0,+\infty)$
given by $j(v)=\frac{1}{2 \lambda}\norm{v}_{H}^{2}$, ($v\in H$), is
strictly coercive, continuously differentiable on $H$ and its
Fr\'echet derivative $j'$ coincides with the subdifferential
$\partial_{H}j$ given by
$\partial_{H}j(v)=\frac{\td}{\td v}j(v)=\frac{1}{\lambda} v$,
($v\in H$). Note, $\partial_{H}j$ is a bounded linear operator on $H$
and hence, in particular, weakly continuous, and $A$ is
$\partial_{H}j$ monotone. Thus, by Theorem~\ref{thm:2}, for every
$\varphi\in D(A)$, there is a unique solutions $v\in C^{1}([0,+\infty);H)$ of
\begin{equation}
  \label{eq:33}
  \begin{cases}
    \hspace{54pt}z^{-\frac{1-2s}{s}}v''(z)\in A(v(z)) & \text{for
      a.e. $z>0$,}\\
    -(2s)^{1-2s} v'(0)+\lambda v(0)=\varphi. &
  \end{cases}
\end{equation}
By definition of $\Theta_{s}$, we have thereby shown that
for every $\varphi\in D(A)$ and $\lambda>0$, there is a $v(0)\in D(\Theta_{s})$
satisfying
\begin{equation}
  \label{eq:95}
  v(0)+\lambda \Theta_{s}v(0)=\varphi.
\end{equation}
Thus, $D(A)\subseteq \textrm{Rg}(I_{H}+\lambda \Theta_{s})$
for every $\lambda>0$. To see that also
$\overline{D(A)}^{\mbox{}_{H}}\subseteq \textrm{Rg}(I_{H}+\lambda
\overline{\Theta}_{s})$, take
$\varphi\in \overline{D(A)}^{\mbox{}_{H}}$. Then, there is a sequence
$(\varphi_{n})_{n\ge 1}\subseteq D(A)$ such that $\varphi_{n}\to \varphi$ in
$H$ as $n\to +\infty$. Moreover, since $D(A)\subseteq
\textrm{Rg}(I_{H}+\lambda \Theta_{s})$, for each
$\varphi_{n}$, there is a unique solution $v_{n}\in C^{1}([0,+\infty);H)$ of Dirichlet
problem~\eqref{eq:2bis} satisfying $\Theta_{s}v_{n}(0)+\lambda
v_{n}(0)=\varphi_{n}$. Thus by the
monotonicity of $\Theta_{s}$, one has that
\begin{align*}
  \norm{v_{n}(0)-v_{m}(0)}_{H}^{2} & =
                                     \frac{1}{\lambda}(v_{n}(0)-v_{m}(0),
                                     \varphi_{n}-\varphi_{m})_{H}\\
  &\qquad -\frac{1}{\lambda} (v_{n}(0)-v_{m}(0),
                                  \Theta_{s}v_{n}(0)-\Theta_{s}v_{m}(0) )_{H}\\
  &\le \frac{1}{\lambda}\norm{v_{n}(0)-v_{m}(0)}_{H}\,
                                     \norm{\varphi_{n}-\varphi_{m}}_{H}
\end{align*}
and so,
\begin{equation}
  \label{eq:34}
  \norm{v_{n}(0)-v_{m}(0)}_{H}\le  \frac{1}{\lambda}\,
                                     \norm{\varphi_{n}-\varphi_{m}}_{H}.
\end{equation}
Since each $v_{n}(0)\in \overline{D(A)}^{\mbox{}_{H}}$, $v_{n}$
satisfies~\eqref{eq:87} and~\eqref{eq:88}holds. Combining this
with~\eqref{eq:34}, one sees that $(v_{n})_{n\ge 0}$ is a Cauchy
sequence in $C^{b}([0,+\infty);H)$ and so, there is a
$v\in C^{b}([0,+\infty);H)$ such that $v_{n}\to v$ in
$C^{b}([0,+\infty);H)$ as $n\to +\infty$. In particular,
$v_{n}(0)\to v(0)$ in $H$ as $n\to +\infty$. Thus, $v(0)\in \overline{D(A)}^{\mbox{}_{H}}$ and
so, Theorem~\ref{thm:2} yields that $v\in C^{1}([0,+\infty);H)$ and
$v$ is a solution of Dirichlet problem~\eqref{eq:2bis}. Moreover, we
have that
\begin{displaymath}
 w:=\lim_{n\to +\infty} \Theta_{s}v_{n}(0)=\varphi-\lambda v(0)
  \qquad\text{exists in $H$,}
\end{displaymath}
showing that $(v(0),w)\in \overline{\Theta}_{s}$ and
$\overline{\Theta}_{s}v(0)+\lambda v(0)=\varphi$. This proves
that $\overline{D(A)}^{\mbox{}_{H}}\subseteq \textrm{Rg}(I_{H}+\lambda
\overline{\Theta}_{s})$. 
% The operator
% $\overline{\Theta}_{s}$ is a closed monotone operator on $H$.
% and and
%Since $\overline{D(A)}^{\mbox{}_{H}}=\overline{D(\overline{\Theta}_{s}^{0})}$ is convex, 
%\cite[Proposition 2.19]{MR0348562} implies that
% has that
% $\overline{\Theta}_{s}^{0}+\partial_{H}j_{\overline{D(A)}^{\mbox{}_{H}}}$
% is the unique maximal monotone extension of
% $\overline{\Theta}_{s}$, where
% $j_{\overline{D(A)}^{\mbox{}_{H}}}$ denotes the indicator function on
% $\overline{D(A)}^{\mbox{}_{H}}$, that is,
% $j_{\overline{D(A)}^{\mbox{}_{H}}}(v)=0$ if
% $v\in \overline{D(A)}^{\mbox{}_{H}}$ and
% $j_{\overline{D(A)}^{\mbox{}_{H}}}(v)=+\infty$ if otherwise.
Thus, if $\overline{D(A)}^{\mbox{}_{H}}=H$, then
$\overline{\Theta}_{s}$ is maximal monotone on $H$. 
%  and so, the minimal
% selection $(\overline{\Theta}_{s})^{0}$ of $\overline{\Theta}_{s}$ is a
% single-valued monotone mapping on $H$. Moreover, the preceding
% argument says that for every $\varphi\in D(A)$, there is a unique
% solution $v\in C^{1}([0,+\infty);H)$ of Dirichlet
% problem~\eqref{eq:2bis} satisfying~\eqref{eq:95} and
% $\theta_{s}v(0)=\Theta_{s}v(0)=(\overline{\Theta}_{s})^{0}v(0)$. Thus, $H subseteq \textrm{Rg}(I_{H}+\lambda
% (\overline{\Theta}_{s})^{0})$ and hence the closure
% $\overline{(\overline{\theta}_{s}^{0}}$ of
% $(\overline{\Theta}_{s})^{0}$ is maximal monotone. Since
% $\overline{(\overline{\theta}_{s}^{0}}\subseteq
% \overline{\theta}_{s}$, it follows that
% $\overline{(\overline{\theta}_{s}^{0}}=\overline{\theta}_{s}$.
%
%$w=\varphi-\lambda v(0)\in \overline{\theta}_{s}v(0)$. 
% On the other hand, since $v\in C^{1}([0,+\infty);H)$,
% $-(2s)^{1-2s} v'(0)=\Theta_{s}v(0)=\overline{\theta}_{s}^{0}v(0)$.
%  Thus, $v(0)\in D(\overline{\Theta}_{s}^{0})$ and  (\txtr{False conclusion!})
% $\overline{\Theta}_{s}^{0}+\lambda v(0)=\varphi$, showing that
% $H\subseteq \textrm{Rg}(I_{H}+\lambda
% \overline{\Theta}_{s}^{0})$. Now, the maximal monotonicity implies
% that $\overline{\theta}_{s}^{0}=\overline{\theta}_{s}$. 
%
%  and
% since $\overline{\Theta}_{s}^{0}\subseteq
% \overline{\Theta}_{s}$, it follows that
% $\overline{\Theta}_{s}^{0}=\overline{\Theta}_{s}$,
% proving that $\overline{\Theta}_{s}$ is single-valued. Since
% for every $(\varphi,w)\in \overline{\Theta}_{s}$, there is a
% unique solution $v\in C^{1}([0,+\infty);H)$
This completes the proof of this
corollary. % Finally, to see that $\Theta$
% satisfies~\eqref{eq:90}, assume that $0\in A0$. Then, \eqref{eq:90}
% follows from....?
\end{proof}

%%%%%%%%%%%%%%%%%%%%%%%%%%%%%%%%%%%%%%%%%%%%%%%%%%%%%%
%
%
%
%               Interpolation properties
%
%
%
%%%%%%%%%%%%%%%%%%%%%%%%%%%%%%%%%%%%%%%%%%%%%%%%%%%%%%

\section{Interpolation properties}
\label{sec:interpol}

Our first theorem of this section, allows us to establish
interpolation properties and comparison principles of the semigroup
$\{\tilde{T}_{s}(t)\}_{t\ge 0}$ generated by $-\overline{\Theta}_{s}$.

\begin{theorem}
  \label{thm:4}
  Let $A$ be a maximal monotone operator on $H$
  with $0\in \textrm{Rg}(A)$. Suppose,
  $\phi : H\to \R\cup\{+\infty\}$ is convex, proper and lower
  semicontinuous such that $A$ is $\partial_{H}\phi$-monotone. Then
  for every for $0<s<1$, $\Theta_{s}$ is $\partial_{H}\phi$-monotone.
\end{theorem}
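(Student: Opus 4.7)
The plan is to prove the equivalent formulation of the claim: for every $\varphi_1,\varphi_2\in D(\Theta_s)$ and every $\nu\ge 0$,
$$\phi(\varphi_1-\varphi_2)\le \phi\big((\varphi_1-\varphi_2)+\nu(\Theta_s\varphi_1-\Theta_s\varphi_2)\big).$$
Letting $v_1,v_2$ be the solutions of the Dirichlet problem~\eqref{eq:2bis} with $v_i(0)=\varphi_i$, so that $\Theta_s\varphi_i=-(2s)^{1-2s}v_i'(0)$, and setting $w=v_1-v_2$, by the construction in Section~\ref{sec:exuniqueness} each $v_i=\lim_{\delta\to 0+}\lim_{\lambda\to 0+}v_{\lambda,i}$ is a double limit of Yosida-$\delta$-regularized solutions. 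Using the convergences~\eqref{eq:64},~\eqref{eq:5}, and~\eqref{eq:72} together with the weak lower semicontinuity of the convex proper lsc functional $\phi$, it is enough to prove the analogous estimate for $v_{\lambda,i}$.

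The first ingredient is that $A_\lambda$, hence $A_\lambda+\delta I$, inherits $\partial_H\phi$-monotonicity: the standard identity $J_\nu^{A_\lambda}=\tfrac{\lambda}{\lambda+\nu}I+\tfrac{\nu}{\lambda+\nu}J_{\lambda+\nu}^{A}$, convexity of $\phi$, and the assumed $\partial_H\phi$-monotonicity of $J_{\lambda+\nu}^A$ combine in one line to give $\phi(J_\nu^{A_\lambda}u-J_\nu^{A_\lambda}\hat u)\le\phi(u-\hat u)$.

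The heart of the proof is establishing the convexity of $\Phi_\lambda(z):=\phi(w_\lambda(z))$ on $[0,+\infty)$, where $w_\lambda=v_{\lambda,1}-v_{\lambda,2}$. Since $A_\lambda$ is Lipschitz, each $v_{\lambda,i}$ is of class $C^2$ and solves classically $v_{\lambda,i}''(z)=z^{(1-2s)/s}(A_\lambda+\delta I)v_{\lambda,i}(z)$. Discretizing by the symmetric three-point implicit scheme on $z_k=kh$ gives
$$v_{\lambda,i,k}=J_{h^2 z_k^{(1-2s)/s}/2}^{A_\lambda+\delta I}\!\Big(\tfrac{v_{\lambda,i,k-1}+v_{\lambda,i,k+1}}{2}\Big),$$
so by $\partial_H\phi$-monotonicity of the resolvent and convexity of $\phi$, one obtains the discrete convexity $\phi(w_{\lambda,k})\le\tfrac{1}{2}\phi(w_{\lambda,k-1})+\tfrac{1}{2}\phi(w_{\lambda,k+1})$. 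Passing to the continuum (by classical convergence of the scheme for Lipschitz ODEs, possibly after a preliminary Moreau--Yosida regularization of $\phi$ to avoid issues with $+\infty$-values) yields convexity of $\Phi_\lambda$.

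Finally, since $w_\lambda\in L^\infty(H)$ by~\eqref{eq:87} and one may reduce to the nontrivial case in which the right-hand side of the desired inequality is finite, convexity of $\Phi_\lambda$ forces it to be nonincreasing, whence $\Phi_\lambda'(0^+)\le 0$. The convex subgradient inequality applied along $z\mapsto w_\lambda(z)$ gives $(\xi,w_\lambda'(0))_H\le\Phi_\lambda'(0^+)\le 0$ for every $\xi\in\partial_H\phi(w_\lambda(0))$, and applying the subgradient inequality once more at the displacement $-\nu(2s)^{1-2s}w_\lambda'(0)$ yields
$$\phi\big(w_\lambda(0)-\nu(2s)^{1-2s}w_\lambda'(0)\big)\ge\phi(w_\lambda(0))-\nu(2s)^{1-2s}(\xi,w_\lambda'(0))_H\ge\phi(w_\lambda(0)).$$
Passing $\lambda\to 0$ and then $\delta\to 0$ concludes the argument. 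The main obstacle is the rigorous justification of convexity of $\Phi_\lambda$ from the discrete inequality when $\phi$ is permitted to take the value $+\infty$; a secondary subtlety is guaranteeing $\partial_H\phi(w_\lambda(0))\ne\emptyset$, which can be handled by first proving the statement for a Moreau--Yosida approximant $\phi_\varepsilon$ of $\phi$ and letting $\varepsilon\to 0$.
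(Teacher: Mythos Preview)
Your approach diverges substantially from the paper's, which is much shorter and more direct. The paper works with the \emph{limit} solutions $v,\hat v$ of~\eqref{eq:2bis} (no $\lambda,\delta$ approximation), regularizes only $\phi$ via its Moreau--Yosida envelope $\phi_\mu$, and uses the Brezis characterization (Proposition~4.7 in~\cite{MR0348562}) that $\partial_H\phi$-monotonicity of $A$ is equivalent to $(f-\hat f,\phi'_\mu(u-\hat u))_H\ge 0$ for all $(u,f),(\hat u,\hat f)\in A$ and $\mu>0$. Setting $w=v-\hat v$, this gives $(w'',\phi'_\mu(w))_H\ge 0$ pointwise; combined with $(\tfrac{d}{dz}\phi'_\mu(w),w')_H\ge 0$ (monotonicity of $\phi'_\mu$) and the integration-by-parts rule of Lemma~\ref{lem:ibp}, one immediately gets $-(w'(0),\phi'_\mu(w(0)))_H\ge 0$, which is precisely the characterization needed for $\Theta_s$.

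There is a genuine gap in your argument. The inference ``$A_\lambda$, hence $A_\lambda+\delta I$, inherits $\partial_H\phi$-monotonicity'' is false for general $\phi$. Indeed, $J_\nu^{A_\lambda+\delta I}u=J_{\nu/(1+\nu\delta)}^{A_\lambda}\bigl(\tfrac{u}{1+\nu\delta}\bigr)$, so after applying the $\partial_H\phi$-monotonicity of $A_\lambda$ you are left with proving $\phi\bigl(\tfrac{u-\hat u}{1+\nu\delta}\bigr)\le\phi(u-\hat u)$, i.e.\ $\phi(tx)\le\phi(x)$ for $t\in(0,1)$. By convexity this is equivalent to $\phi(0)=\min_H\phi$, which is \emph{not} assumed in Theorem~\ref{thm:4}. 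Consequently the discrete midpoint inequality $\phi(w_{\lambda,k})\le\tfrac12\phi(w_{\lambda,k-1})+\tfrac12\phi(w_{\lambda,k+1})$ need not hold, and the convexity of $\Phi_\lambda$ is unproven. (In the main application, Corollary~\ref{cor:5}, one has $\phi=\phi_j$ with $j(0)=0$, so $\phi(0)=0=\min\phi$ and your argument would go through there; but the theorem is stated for arbitrary $\phi$.)

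Even setting this aside, the route via discretization is an unnecessary detour. Once you regularize $\phi$ by $\phi_\varepsilon$ --- as you yourself propose to handle the $\partial_H\phi(w_\lambda(0))=\emptyset$ and boundedness issues --- the map $z\mapsto\phi_\varepsilon(w(z))$ is $C^1$ and can be differentiated directly; you then recover exactly the two nonnegative terms the paper integrates, and the discrete scheme, its convergence on an unbounded interval, and the passage back through the double limit $\lambda\to 0$, $\delta\to 0$ all become superfluous. The cleaner path is to regularize $\phi$, not the solutions.
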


In the case $H=L^{2}(\Sigma,\mu)$ the Lebesgue space of $2$-integrable
functions defined on a
$\sigma$-finite measure space $(\Sigma,\mu)$, then
Theorem~\ref{thm:4} provides the following interpolation properties of
the semigroup $\{\tilde{T}_{r}\}_{r\ge 0}$ generated by $-\overline{\Theta}_{s}$.

\begin{corollary}
  \label{cor:5}
  Let $(\Sigma,\mu)$ be a $\sigma$-finite measure space and
  $A$ a maximal monotone operator on
  $L^{2}(\Sigma,\mu)$ with $0\in \textrm{Rg}(A)$. If $A$ is completely
  accretive, then for every  $0<s<1$, the DtN operator $\Theta_{s}$ is completely
  accretive on $L^{2}(\Sigma,\mu)$. In particular, if the closure
  $\overline{\Theta}_{s}$ of $\Theta_{s}$ is maximal
  monotone, then the semigroup $\{\tilde{T}_{s}(t)\}_{t\ge 0}$
  generated by $-\overline{\Theta}_{s}$ is $L^{\psi}$-contractive on
  $L^{2}(\Sigma,\mu)$ for any $N$-function $psi$, 
  $\{\tilde{T}_{s}(t)\}_{t\ge 0}$ is $L^{1}$- and $L^{\infty}$-contractive on
  $L^{2}(\Sigma,\mu)$, and order preserving.
\end{corollary}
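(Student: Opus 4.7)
The strategy is to derive complete accretivity of $\Theta_{s}$ from Theorem~\ref{thm:4} by specialising the convex functional $\phi$ there to an integral functional on $H=L^{2}(\Sigma,\mu)$. For each $j\in\mathcal{J}_{0}$, consider
\begin{displaymath}
\phi_{j}(v):=\int_{\Sigma}j(v(\sigma))\,\textrm{d}\mu(\sigma),\qquad v\in L^{2}(\Sigma,\mu).
\end{displaymath}
Since $j$ is convex, nonnegative, lower semicontinuous and satisfies $j(0)=0$, the functional $\phi_{j}$ is convex, proper (as $\phi_{j}(0)=0$) and, via Fatou's lemma applied to a $\mu$-a.e.\ convergent subsequence extracted from an $L^{2}$-convergent sequence, lower semicontinuous on $L^{2}(\Sigma,\mu)$. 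Unwinding the definitions, complete accretivity of $A$ is precisely the statement that
\begin{displaymath}
\phi_{j}(J^{A}_{\lambda}u-J^{A}_{\lambda}\hat u)\le\phi_{j}(u-\hat u)
\end{displaymath}
for every $\lambda>0$, every $u,\hat u\in \textrm{Rg}(I+\lambda A)$ and every $j\in\mathcal{J}_{0}$; in other words, $A$ is $\partial_{H}\phi_{j}$-monotone simultaneously for \emph{every} $j\in\mathcal{J}_{0}$.

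Applying Theorem~\ref{thm:4} individually for each $\phi_{j}$ then transfers this property to $\Theta_{s}$: for every $j\in\mathcal{J}_{0}$ and every $\lambda>0$,
\begin{displaymath}
\int_{\Sigma}j\bigl(J^{\Theta_{s}}_{\lambda}u-J^{\Theta_{s}}_{\lambda}\hat u\bigr)\,\textrm{d}\mu\le\int_{\Sigma}j(u-\hat u)\,\textrm{d}\mu
\end{displaymath}
for all $u,\hat u\in\textrm{Rg}(I+\lambda\Theta_{s})$. Hence $J^{\Theta_{s}}_{\lambda}$ is a complete contraction and $\Theta_{s}$ is completely accretive on $L^{2}(\Sigma,\mu)$. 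The closure $\overline{\Theta}_{s}$ inherits this property by a standard closedness argument, since its resolvents are strong limits of those of $\Theta_{s}$ and Fatou's lemma preserves the defining inequality under such limits.

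Assume now that $\overline{\Theta}_{s}$ is maximal monotone, so that it generates the strongly continuous contraction semigroup $\{\tilde T_{s}(t)\}_{t\ge 0}$. The Crandall--Liggett exponential formula gives
\begin{displaymath}
\tilde T_{s}(t)\varphi=\lim_{n\to\infty}\bigl(I+\tfrac{t}{n}\overline{\Theta}_{s}\bigr)^{-n}\varphi\qquad\text{strongly in }H.
\end{displaymath}
Because the class of complete contractions is closed under composition and, again by Fatou applied to an a.e.-convergent subsequence, under pointwise strong limits, each $\tilde T_{s}(t)$ is itself a complete contraction. Proposition~\ref{prop:charact-cc}, combined with the standard characterisation of $L^{\psi}$-contractivity in terms of convex integrands, then delivers the $L^{\psi}$-contractivity for every $N$-function $\psi$, the $L^{1}$- and $L^{\infty}$-contractivity, and the order preservation of $\{\tilde T_{s}(t)\}_{t\ge 0}$.

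The main obstacle I anticipate is making the reduction uniform in $j\in\mathcal{J}_{0}$: one must confirm that the integral functional $\phi_{j}$ satisfies the hypotheses of Theorem~\ref{thm:4} for \emph{every} admissible $j$ (not a single fixed one), and that the resulting $\partial_{H}\phi_{j}$-monotonicity of $\Theta_{s}$ survives both the closure in $H\times H_{w}$ and the passage through the exponential formula to the semigroup level. Once this equivalence between complete accretivity and simultaneous $\partial_{H}\phi_{j}$-monotonicity is cleanly in place, the remainder is routine bookkeeping with Fatou's lemma and standard resolvent convergence.
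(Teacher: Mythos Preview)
Your proposal is correct and follows essentially the same route as the paper: both reduce complete accretivity to simultaneous $\partial_{H}\phi_{j}$-monotonicity for the integral functionals $\phi_{j}(v)=\int_{\Sigma}j(v)\,\textrm{d}\mu$ with $j\in\mathcal{J}_{0}$, and then invoke Theorem~\ref{thm:4} for each such $\phi_{j}$ to transfer the property from $A$ to $\Theta_{s}$. Your anticipated ``obstacle'' is not one --- the hypotheses of Theorem~\ref{thm:4} (convex, proper, lower semicontinuous $\phi$) are exactly what you verified for each $\phi_{j}$, and the argument is applied one $j$ at a time, so no uniformity issue arises. The only difference is that the paper leaves the passage from resolvent-level complete accretivity to the semigroup-level contractivity statements implicit, whereas you spell it out via the Crandall--Liggett exponential formula and Fatou's lemma; this is a welcome clarification rather than a different argument.
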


The proof of  Corollary~\ref{cor:5} follows immediately from
Theorem~\ref{thm:4} since if $A$ is completely accretive on
$L^{2}(\Sigma,\mu)$, then for every $\lambda>0$, the resolvent
$J_{\lambda}^{A}$ of $A$ is a complete contraction, which by Proposition~\ref{prop:charact-cc}
means that for every convex, lower semicontinous function $j : \R\to \overline{\R}_{+}$  
satisfying $j(0)=0$, $A$ is $\partial_{H}\phi_{j}$-monotone for $\phi_{j} :
L^{2}(\Sigma,\mu)\to \overline{\R}_{+}$ given by
\begin{displaymath}
  \phi_{j}(u)=
  \begin{cases}
    \int_{\Sigma}j(u)\,\dmu & \text{if
      $j(u)\in L^{1}(\Sigma;\mu)$,}\\
    +\infty &\text{if otherwise,}
  \end{cases}
\end{displaymath}
for every $u\in L^{2}(\Sigma,\mu)$. In particular, for every
$N$-function $\psi$ and $\alpha>0$, $A$ is
$\partial_{H}\phi_{j}$-monotone for $j=\psi(\frac{\cdot}{\alpha})$,
and $A$ is $\partial_{H}\phi_{j}$-monotone for
$j=\psi(\frac{\max\{\cdot,0\}}{\alpha})$. Thus, by
Theorem~\ref{thm:4}, the semigroup $\{\tilde{T}_{s}(t)\}_{t\ge 0}$
generated by $-\overline{\Theta}_{s}$ is $L^{\psi}$-contractive on
  $L^{2}(\Sigma,\mu)$ for any $N$-function $psi$ and
  order preserving. Choosing $\psi=s^{q}$ for $q\in
  (1,+\infty)$. Letting $q\to 1$ or $q\to +\infty$ yields that
  $\{\tilde{T}_{s}(t)\}_{t\ge 0}$ is $L^{1}$- and $L^{\infty}$-contractive on
  $L^{2}(\Sigma,\mu)$.

\begin{remark}[Interpolation of $\{\tilde{T}_{s}(t)\}_{t\ge 0}$ on $L^{\psi}(\Sigma,\mu)$]
  By Corollary~\ref{cor:5}, if there is a $u_{0}\in L^{1}\cap
  L^{\infty}(\Sigma,\mu)$ such that the orbit
  $\tilde{T}_{s}(\cdot)u_{0}:=\{\tilde{T}_{s}(t)u_{0}\,\vert\,t\ge 0\}$ is locally
  bounded on $[0,+\infty)$ with values in $L^{1}\cap
  L^{\infty}(\Sigma,\mu)$, then for every $N$-function $\psi$, every
  $\tilde{T}_{s}(t)$ of the semigroup $\{\tilde{T}_{s}(t)\}_{t\ge 0}$ generated by
  $-\overline{\Theta}_{s}$ has unique contractive extension on $L^{\psi}(\Sigma,\mu)$,
  $L^{1}(\Sigma,\mu)$ and on
  $\overline{L^{2}\cap
    L^{\infty}(\Sigma,\mu)}^{\mbox{}_{L^{\infty}}}$, which we denote
  again by $\tilde{T}_{s}(t)$.
\end{remark}

We continue by giving the proof of the $\partial_{H}\phi$-monotonicity
of the DtN operator $\Theta_{s}$.

\begin{proof}[Proof of Theorem~\ref{thm:4}]
  For $\mu>0$, let $\phi_{\mu} : H\to \R$ be the regularization of
  $\phi$ defined by
  \begin{displaymath}
    \phi_{\mu}(v)=\min_{w\in H}\frac{1}{\mu
      2}\norm{w-v}_{H}^{2}+\phi(w)\qquad\text{for all $v\in H$.}
  \end{displaymath}
  By~\cite[Proposition~2.11]{MR0348562}, $\phi_{\mu}\in C^{1}(H;\R)$
  and the Fr\'echet derivative $\phi'_{\mu} : H\to H$ is Lipschitz
  continuous. Thus, for every $w\in W^{1,1}_{loc}((0,+\infty);H)$, $\phi'_{\mu}(w(z))$ is weakly
  differentiable at almost every $z>0$ and by the monotonicity of
  $\phi'_{\mu}$, one has that
  \begin{equation}
    \label{eq:75}
    \left(\tfrac{\td}{\dz}\phi'_{\mu}(w(z)),w'(z)\right)_{H}\ge
    0\qquad\text{for a.e. $z>0$.}
  \end{equation}
  Now, let $\varphi$, $\hat\varphi\in D(\Theta_{s})$ and
  $v$ and $\hat{v}$ two solutions of Dirichlet
  problem~\eqref{eq:2} with initial value $v(0)=\varphi$ and
  $\hat{v}(0)=\hat{\varphi}\in D(A)$. Since $v$ and $\hat{v}$ satisfy
  \begin{displaymath}
    z^{-\frac{1-2s}{s}}v''(z)\in A v(z)
    \qquad\text{and}\qquad
    z^{-\frac{1-2s}{s}}\hat{v}''(z)\in A \hat{v}(z)
  \end{displaymath}
  for almost every $z>0$ and since $A$ is
  $\partial_{H}\phi$-monotone (cf~\cite[Proposition~4.7]{MR0348562}),
  one has that
  \begin{displaymath}
    z^{-\frac{1-2s}{s}}(v''(z)-\hat{v}''(z),\phi'_{\mu}(v(z)-\hat{v}(z)))_{H}\ge 0
  \end{displaymath}
  for almost every $z>0$. Thus, if we set $w=v-\hat{v}$, then
  \begin{equation}
    \label{eq:76}
    (w''(z),\phi'_{\mu}(w(z)))_{H}\ge 0\qquad\text{ for
      almost every $z>0$.}
  \end{equation}
  Moreover, since $v(0)$ and $\hat{v}(0)\in D(A)$, one has
  $w'\in W^{1,2}_{\frac{1}{2},\frac{3s-1}{2s}}(H)$ and 
  $w \in
  W^{1,2}_{\frac{1-s}{2s},\frac{1}{2}}(H)$, and by the
  Lipschitz continuity of $\phi'_{\mu}$, 
  $\phi_{\mu}'\circ w\in
  W^{1,2}_{\frac{1-s}{2s},\frac{1}{2}}(H)$. Therefore, we
  can apply the integration by parts rule~\eqref{eq:32} of
  Lemma~\ref{lem:ibp}. This together with~\eqref{eq:75}
  and~\eqref{eq:76}, shows that
  \begin{align*}
    -(w'(0),\phi'_{\mu}(w(0)))_{H} &
      = \int_{0}^{+\infty}(w''(z),\phi'_{\mu}(w(z)))_{H}\,\dz\\
    &\qquad +
      \int_{0}^{+\infty}\left(w'(z),\frac{\td}{\dz}\phi'_{\mu}(w(z))\right)_{H}\,\dz\ge
    0.
  \end{align*}
  Since $w=v-\hat{v}$ and $\varphi$, $\hat\varphi\in
  D(\Theta_{s})$ were arbitrary, this implies by
  \cite[Proposition~4.7]{MR0348562}) that $\Theta_{s}$
  is $\partial_{H}\phi$-monotone.
\end{proof}

%%%%%%%%%%%%%%%%%%%%%%%%

\section{Applications}
\label{sec:app}

In this section, we outline an application of Theorem~\ref{thm:1} -
Theorem~\ref{thm:2Robin}.\medskip

Let $\Sigma$ be an open subset of $\R^{d}$, $(d\ge 1)$, and $\mu=\mathcal{L}^{d}$ be
the $d$-dimensional Lebesgue measure. For
$1<p<+\infty$, suppose that $a : \Sigma \times \R^{d}\to \R^{d}$ is a
Carath\'eodory function satisfying the following
\emph{$p$-coercivity}, \emph{growth} and \emph{monotonicity}
conditions
 \begin{align}
   \label{eq:coerciveness}
   &a(x,\xi)\xi\ge \eta \abs{\xi}^{p}\\
   \label{eq:growth-cond}
   &\abs{a(x,\xi)}\le c_{1}\abs{\xi}^{p-1}+h(x)\\
   \label{eq:monotonicity-of-a}
   &(a(x,\xi_{1})-a(x,\xi_{2}))(\xi_{1}-\xi_{2})>0
 \end{align}
 for a.e. $x\in \Sigma$ and all $\xi$, $\xi_{1}$, $\xi_{2}\in \R^{d}$
 with $\xi_{1}\neq \xi_{2}$, where $h\in
 L^{p^{\mbox{}_{\prime}}}(\Sigma)$ and $c_{1}$, $\eta>0$ are constants
 independent of $x\in \Sigma$ and $\xi\in \R^{d}$. Under these assumptions, the
 second order quasi linear operator
 \begin{displaymath}
   %\label{eq:gen-div-operator}
  \mathcal{B}u:= -\divergence (a(x,\nabla u))\qquad\text{in $\mathcal{D}'(\Sigma)$}
 \end{displaymath}
 for $u\in W^{1,p}_{loc}(\Omega)$ belongs to the class of
 \emph{Leray-Lions operators} (cf.~\cite{MR0194733}), of which the
 \emph{$p$-Laplace operator}
 $\Delta_{p}u=\textrm{div}(\abs{\nabla u}^{p-2}\nabla u)$ is a
 classical prototype. Here, we write $L^{q}(\Sigma)$ for
 $L^{q}(\Sigma,\mu)$ and if $L^{q}_{0}(\Sigma)$ is the closed subspace
 of $L^{q}(\Sigma)$ of all $u\in L^{q}(\Sigma)$ satisfying
 $\int_{\Sigma}u\,\dx=0$.

Then, the operator $\mathcal{B}$ either equipped
with homogeneous \emph{Dirichlet boundary
conditions}
\begin{equation}
  \label{eq:91}
    u=0\quad\text{on $\partial\Sigma\times (0,\infty)$}\qquad
    \text{if $\Sigma\subseteq \R^{d}$,}
\end{equation}
homogeneous \emph{Neumann boundary
conditions}
\begin{equation}
  \label{eq:92}
  a(x,\nabla u)\cdot\nu =0\quad\text{on
    $\partial\Sigma\times (0,\infty)$}\qquad
  \text{if $\abs{\Sigma}<\infty$,}
\end{equation}
or, homogeneous \emph{Robin boundary
conditions}
\begin{equation}
  \label{eq:93}
  a(x,\nabla u)\cdot\nu+b(x)
  \abs{u}^{p-1}u= 0 \quad\text{on
    $\partial\Sigma\times (0,\infty)$}\quad
  \text{if $\abs{\Sigma}<\infty$}
\end{equation}
is an $m$-completely accretive, single-valued, operator $A$ on
$L^{2}(\Sigma)$ (respectively, on $L^{2}_{0}(\Sigma)$ if $\mathcal{B}$
is equipped with~\eqref{eq:92}) with dense domain $D(A)$ in
$L^{2}(\Sigma)$ (respectively, in $L^{2}_{0}(\Sigma)$). In particular,
$A$ is a maximal monotone on $L^{2}(\Sigma)$ (respectively, in
$L^{2}_{0}(\Sigma)$) satisfying $A0=0$.

Thus by Theorem~\ref{thm:1}, the following Dirichlet problem is
well-posed. For every $\varphi\in L^{2}(\Sigma)$ (respectively, in $L^{2}_{0}(\Sigma)$) and $0<s<1$, there is a unique
solution $u : \Sigma\times [0,+\infty)\to \R$ of 
\begin{displaymath}
  \begin{cases}
    & -\frac{1-2s}{r} u_{r}-u_{rr}-\divergence_{x}
    (a(x,\nabla_{x}
    u))=0\qquad\hspace{1,6cm} \text{for $(x,r)\in \Sigma\times \R_{+}$,}\\
    & \text{$u$ satisfies~\eqref{eq:91}, (\eqref{eq:92} for $\varphi\in L^{2}_{0}(\Sigma)$), or
      ~\eqref{eq:93}}
    \qquad \text{for $(x,r)\in \partial\Sigma\times \R_{+}$,}\\
    & \hspace{4cm}u(\cdot,0)=\varphi(\cdot)\qquad\hspace{1,7cm} \text{on $\Sigma$.}
  \end{cases}
\end{displaymath}
Further by Theorem~\ref{thm:DtN}, for every $0<s<1$, there is a
strongly continuous semigroup $\{T_{s}(t)\}_{t\ge 0}$ generated
by $-A^{s}$ on $L^{2}(\Sigma)$ (respectively, on $L^{2}_{0}(\Sigma)$). The semigroup $\{T_{s}\}_{t\ge 0}$ is
order preserving, and each $T_{s}(t)$ has a unique contractive
extension on $L^{\psi}(\Sigma)$ (respectively, on
$L^{\psi}_{0}(\Sigma)$) for any $N$-function, on $L^{1}(\Sigma)$ and on
$\overline{L^{2}\cap L^{\infty}(\Sigma)}^{\mbox{}_{L^{\infty}}}$. In
particular, for every $\varphi\in L^{2}(\Sigma)$ (respectively,
$\varphi\in L^{2}_{0}(\Sigma)$) and $0<s<1$,
there is a unique solution
$U : \Sigma\times [0,+\infty)\times [0,+\infty)\to \R$ of
boundary-value problem
\begin{displaymath}
\begin{cases}
      & -\frac{1-2s}{r} U_{r}-U_{rr}-\divergence_{x} (a(x,\nabla_{x}
    U))=0\qquad \text{for $(x,r,t)\in \Sigma\times \R_{+}\times
      \R_{+}$,}\\
    & \hspace{5pt}\text{$U$ either satisfies~\eqref{eq:91},~\eqref{eq:92}, or
      ~\eqref{eq:93}}\qquad \text{for $(x,r,t)\in \partial\Sigma\times \R_{+}\times
      \R_{+}$,}\\
     &\mbox{}\hspace{3.7cm} U(0,t)=T_{s}(t)\varphi\qquad \text{on
       $\Sigma$ for every}t\ge 0,\\
    & \hspace{1.9cm} \displaystyle\lim_{t \to 0+}
    t^{1-2s}U_{r}(r,t)\in \tfrac{\td}{\dt} T_{s}(t)
    \qquad \text{on $\Sigma$, for every $t>0$,}\\
    & \hspace{3,75cm}U(\cdot,0,0)=\varphi(\cdot)\qquad\text{on $\Sigma$.}
    \end{cases}
  \end{displaymath}
  Finally, due to Theorem~\ref{thm:2Robin}, for every
  $\varphi\in D(A)$, $\lambda>0$, and $0<s<1$, there is
  a unique solution $u : \Sigma\times [0,+\infty)\to \R$ of the Robin problem
\begin{displaymath}
  \begin{cases}
    & -\frac{1-2s}{r} u_{r}-u_{rr}-\divergence_{x}
    (a(x,\nabla_{x}
    u))=0\qquad \text{for $(x,r)\in \Sigma\times \R_{+}$,}\\
    & \,\text{$u$ either satisfies~\eqref{eq:91},~\eqref{eq:92}, or
      ~\eqref{eq:93}}
    \qquad \text{for $(x,r)\in \partial\Sigma\times \R_{+}$,}\\
    & \hspace{0,85cm}\displaystyle
    \lim_{r \to 0+} r^{1-2s}u_{r}(r) +\lambda
    u(\cdot,0)=\varphi(\cdot)\qquad\text{on $\Sigma$.}
  \end{cases}
\end{displaymath}

%%%%%%%%%%%%%%%%%%%%%%%%%%%%%%%%%%%%%%%%%%%%%%%%%%%
%
%                                   R E F E R E N C E S
%
%%%%%%%%%%%%%%%%%%%%%%%%%%%%%%%%%%%%%%%%%%%%%%%%%%%

\bibliographystyle{alpha}
%\bibliography{citations}

\end{document}